\documentclass[12pt,a4paper]{amsart}
\usepackage{dsfont}
\usepackage{graphicx}
\usepackage{amssymb}
\usepackage[all]{xy}
\usepackage{amsmath}
\usepackage{tikz}
\usepackage[french,english]{babel}
\usepackage{amssymb,amsmath,amsfonts,amsthm,amscd}
\usepackage{indentfirst,graphicx}
\usepackage[font={scriptsize},captionskip=5pt]{subfig}
\usepackage{tikz}
\usetikzlibrary{matrix}
\usepackage[T1]{fontenc}
\usepackage[utf8]{inputenc}

\newtheorem{theorem}{Theorem}[section]
\newtheorem{corollary}[theorem]{Corollary}
\newtheorem{proposition}[theorem]{Proposition}
\newtheorem{lemma}[theorem]{Lemma}

\newtheorem{definition}[theorem]{Definition}

\newtheorem{example}[theorem]{Example}
\newtheorem{remark}[theorem]{Remark}


\newcommand{\cB}{{\mathcal B}}

\newcommand{\cJ}{{\mathcal J}}

\newcommand{\cL}{{\mathcal L}}
\newcommand{\cO}{{\mathcal O}}

\newcommand{\cX}{{\mathcal X}}

\newcommand{\cM}{{\mathcal M}}

\newcommand{\cOXp}{{\cO_X^p}}
\newcommand{\bC}{{\mathbb{C}}}
\newcommand{\bP}{{\mathbb{P}}}
\newcommand{\bN}{{\mathbb{N}}}

\newcommand{\norm}{{\parallel}}

\begin{document}

\title{THE LIPSCHITZ SATURATION OF A MODULE}
\author{Terence Gaffney and Thiago F. da Silva}

\maketitle

\begin{abstract}
{\small In this work we extend the concept of the Lipschitz saturation of an ideal defined in \cite{G2} to the context of modules in some different ways, and we prove they are generically equivalent.}
\end{abstract}

\let\thefootnote\relax\footnote{2010 \textit{Mathematics Subjects Classification} 32S15, 14J17, 32S60
	
	\textit{Key words and phrases.}Bi-Lipschitz Equisingularity, Lipschitz saturation of ideals and modules, Double of modules, Bi-Lipschitz equisingularity of a family of curves.}

\section*{Introduction}

The definition of the Lipschitz saturation of an ideal appears in \cite{G2}, in the context of bi-Lipschitz equisingularity. The study of bi-Lipschitz equisingularity was started by Zariski \cite{Z}, Pham and Teissier \cite{PT,P1}, and was further developed by Lipman \cite{L}, Mostowski \cite{M1,M2}, Parusinski \cite{PA1,PA2}, Birbrair \cite{B} and others.

The Lipschitz Saturation and the double of an ideal $I$, denoted $I_S$ and $I_D$, respectively, were defined in \cite{G2}, where $I$ is a sheaf of ideals of $\cO_X$, the analytic local ring of an analytic variety $X$. The ideal $I_S$ consists of elements in $\cO_X$ whose quotient of its pullback by the blowup-map, with a local generator of the pullback of $I$ is Lipschitz. The double $I_D$ is the submodule of $\cO_{X\times X}^2$ generated by $(h\circ\pi_1,h\circ\pi_2)$, $h\in I$, where $\pi_1,\pi_2:X\times X\rightarrow X$ are the projections. Theorem 2.3 of \cite{G2} gives an equivalence between $I_S$ and the integral closure of $I_D$.

In \cite{SG} the authors generalize the notion of the double for a sheaf of modules $\cM$, and the Genericity Theorem is generalized for an arbitrary family of analytic varieties. They also get an interesting decomposition of the stalk of $\cM_D$ at $(x,x')$, as the direct sum of the stalks $\cM_{x}$ and $\cM_{x'}$, as long $x\neq x'$. If $\cM$ is the jacobian module of a family of analytic varieties, the stalks at $x$ and $x'$ determine the tangent hyperplanes at these two points. Since, to control the Lipschitz behavior of the tangent hyperplanes to $X$, it is natural to look for a sheaf on $X\times X$ whose stalks determine the tangent hyperplanes at each pair of distinct points, it is natural to consider the double of the jacobian module. 

This paper is organized as follows:

In section \ref{sec0} we recall some basic background material.

In section \ref{sec3} we present some definitions for the Lipschitz Saturation of a module, inspired by Theorem 2.3 of \cite{G2}, and also motivated by characterizations of the integral closure of modules coming from the integral closure of ideals. Despite all these definitions of Lipschitz saturation agree on the ideal case, we show these notions are not the same anymore in the module case. Nevertheless, we prove all the definitions of Lipschitz saturation of modules are generically equivalent. 

In section \ref{sec4} we prove the Necessity theorem, and we relate the infinitesimal Lipschitz condition A coming from the first Lipschitz saturation with the strongly bi-Lipschitz equisingularity and the Mostowski's conditions for a Lipschitz stratification. A new proof of the Genericity Theorem arises using the Necessity Theorem and the Mostowski's results about Lipschitz stratifications.

\section*{Acknowledgements}

The authors are grateful to Nivaldo Grulha Jr. for his careful reading of this work and to Maria Aparecida Soares Ruas for the valuable conversations about the subject of this work, especially in the generic equivalence among the Lipschitz saturations defined here.

The first author was supported in part by PVE-CNPq, grant 401565/2014-9. The second author was supported by Funda\c{c}\~ao de Amparo \`a Pesquisa do Estado de S\~ao Paulo - FAPESP, Brazil, grant 2013/22411-2.

\section{Background on Lipschitz Saturation of Ideals and Integral Closure of Modules}\label{sec0}

The Lipschitz saturation of a local ring was defined by Pham and  Teissier in \cite{PT}.

\begin{definition}
Let $I$ be an ideal of $\cO_{X,x}$,  $SB_I(X)$ the saturation of the blow-up and $\pi_S: SB_I(X)\rightarrow X$ the projection map. The {\bf Lipschitz saturation} of the ideal $I$ is denoted $I_S$, and is the ideal $I_S:=\{h\in\cO_{X,x}\mid\pi_S^*(h)\in\pi_S^*(I)\}$.
\end{definition}

Since the normalization of a local ring $A$ contains the Lipschitz Saturation of $A$, it follows that $I\subseteq I_S\subseteq\overline{I}$. In particular, if $I$ is integrally closed then $I_S=\overline{I}$.

This definition can be given an equivalent statement using the theory of integral closure of modules. Since Lipschitz conditions depend on controlling functions at two different points as the points come together, we should look for a sheaf defined on $X\times X$. We describe a way of moving from a sheaf of ideals on $X$ to a sheaf on $X\times X$. 

Let $\pi_1,\pi_2: X\times X\rightarrow X$ be the projections to the i-th factor, and let $h\in\cO_{X,x}$. Define $h_D\in\cO_{X\times X,(x,x)}^{2}$ as $(h\circ\pi_1,h\circ\pi_2)$, called the double of $h$. We define the double of the ideal $I$, denoted $I_D$, as the submodule of $\cO_{X\times X,(x,x)}^2$ generated by $h_D$, where $h$ is an element of $I$.

We can see in \cite{G2}, the following result gives a link between Lipschitz saturation and integral closure of modules.

\begin{theorem}[\cite{G2}, Theorem 2.3]
Suppose $(X,x)$ is a complex analytic set germ, $I\subseteq\cO_{X,x}$ and $h\in\cO_{X,x}$.  
\noindent Then $h\in I_S$ if, and only if, $h_D\in\overline{I_D}$. 
\end{theorem}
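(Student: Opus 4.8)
We sketch the strategy we would follow. The idea is to translate each side of the equivalence into a condition on analytic arcs: the right‑hand side through the valuative (curve) criterion for the integral closure of a module, and the left‑hand side through Pham--Teissier's description of the Lipschitz saturation of a local ring together with the curve criterion for ideals; one then matches the two arc‑conditions via the correspondence between arcs on $X\times X$ through $(x,x)$ and pairs of arcs on the normalized blow‑up $\overline{B_I(X)}$.

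More precisely: since $\cO_{\bC,0}$ is a discrete valuation ring every finitely generated submodule of $\cO_{\bC,0}^{2}$ is integrally closed, so the curve criterion says that $h_D\in\overline{I_D}$ if and only if, for every arc $\gamma=(\gamma_1,\gamma_2)\colon(\bC,0)\to(X\times X,(x,x))$, one has $(h\circ\gamma_1,h\circ\gamma_2)\in N_\gamma$, where $N_\gamma$ is the $\cO_{\bC,0}$-submodule of $\cO_{\bC,0}^{2}$ generated by $\{(g\circ\gamma_1,g\circ\gamma_2):g\in I\}$. Smith normal form over $\cO_{\bC,0}$ turns membership in $N_\gamma$ into two Fitting‑ideal inequalities: an order bound on the coordinates $h\circ\gamma_i$ (which follows once $h\in\overline I$, itself obtained by testing diagonal arcs $\gamma_1=\gamma_2$), and a bound $\operatorname{ord}_t\big((h\circ\gamma_1)(g\circ\gamma_2)-(h\circ\gamma_2)(g\circ\gamma_1)\big)\ge m_\gamma$ for all $g\in I$, where, writing $D(p,q)=(p\circ\gamma_1)(q\circ\gamma_2)-(p\circ\gamma_2)(q\circ\gamma_1)$, we set $m_\gamma=\min_{g,g'\in I}\operatorname{ord}_t D(g,g')$. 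On the other side, let $\pi\colon\overline{B_I(X)}\to X$ be the composite with the blow‑up, and near a point $q$ choose a local generator $g$ of the invertible ideal $I\cO_{\overline{B_I(X)}}$. Since $I_S\subseteq\overline I$, the condition $h\in I_S$ is equivalent to the regular function $\phi:=h/g$ on $\overline{B_I(X)}$ lying in the Lipschitz saturation $\cO_{SB_I(X)}=\widetilde{\cO_{B_I(X)}}$, i.e. (Pham--Teissier plus the curve criterion) to $\operatorname{ord}_t(\phi\circ\sigma_1-\phi\circ\sigma_2)\ge c_B(\sigma_1,\sigma_2)$ for every pair of arcs $\sigma_1,\sigma_2$ on $\overline{B_I(X)}$ lying over a common point of $B_I(X)$, where $c_B(\sigma_1,\sigma_2)=\min_{a\in\cO_{B_I(X)}}\operatorname{ord}_t(a\circ\sigma_1-a\circ\sigma_2)$ is the local contact order on the blow‑up. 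An arc $\gamma_i$ through $x$ lifts to an arc $\sigma_i$ on $\overline{B_I(X)}$ (uniquely off the discriminant, in general by the valuative criterion of properness after a harmless common reparametrisation), which sets up the comparison.

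For the implication $h_D\in\overline{I_D}\Rightarrow h\in I_S$ we fix such a pair $\sigma_1,\sigma_2$ over a common point $\bar q$ of $B_I(X)$ (otherwise the Lipschitz condition is vacuous), put $\gamma_i=\pi\circ\sigma_i$, and apply the curve criterion to $\gamma=(\gamma_1,\gamma_2)$: we may write $h\circ\gamma_i=\sum_j c_j\,(g_j\circ\gamma_i)$ with $c_j\in\cO_{\bC,0}$, $g_j\in I$; dividing by $g$ gives $\phi\circ\sigma_i=\sum_j c_j\,(\psi_j\circ\sigma_i)$ with $\psi_j:=g_j/g\in\cO_{B_I(X),\bar q}$, whence
\[
\operatorname{ord}_t(\phi\circ\sigma_1-\phi\circ\sigma_2)\ \ge\ \min_j\operatorname{ord}_t(\psi_j\circ\sigma_1-\psi_j\circ\sigma_2)\ \ge\ c_B(\sigma_1,\sigma_2),
\]
so $\phi$ is Lipschitz in the Pham--Teissier sense and $h\in I_S$.

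The converse uses the same expansion in reverse, and reduces to deriving the Fitting inequalities for $N_\gamma$ from the Lipschitz inequalities for $\phi$. When the lifts $\sigma_1,\sigma_2$ land at different points of $B_I(X)$, the blow‑up separates the two limiting directions, so some pair $g,g'\in I$ has linearly independent leading coefficient vectors along $(\gamma_1,\gamma_2)$; hence $m_\gamma=a_1+a_2$ with $a_i=\operatorname{ord}_t(I\circ\gamma_i)$, so $N_\gamma=(t^{a_1})\oplus(t^{a_2})$ and membership is automatic from $h\in\overline I$; such arc pairs also lie away from the locus relevant to the Lipschitz condition. The remaining, and main, difficulty is the case where $\sigma_1,\sigma_2$ limit to the same point of $B_I(X)$: there the Lipschitz condition is governed by $c_B(\sigma_1,\sigma_2)$ while membership in $N_\gamma$ is governed by $m_\gamma-a_1-a_2$, and one must show these two integers coincide. \textbf{This is the crux.} The subtlety is that $N_\gamma$ (hence $m_\gamma$) depends not on a chosen generating set of $I$ but on all of $I$ — equivalently on the full module $I':=g^{-1}I\subseteq\cO_{\overline{B_I(X)}}$, which contains $\cO_X$ — so one must compare the contact order of $\sigma_1,\sigma_2$ as measured by $\cO_{B_I(X)}$, by $I'$, and by a generating set of $I'$, and show them equal. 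We would do this using that $\delta(\cO_{B_I(X)})$ and $\delta(\cO_X)+\delta(I')$ cut out the same ideal on $\overline{B_I(X)}\times\overline{B_I(X)}$ (here $\delta(f)=f\otimes1-1\otimes f$), the Leibniz estimate $\operatorname{ord}_t\big((ab)\circ\sigma_1-(ab)\circ\sigma_2\big)\ge\min\big(\operatorname{ord}_t(a\circ\sigma_1-a\circ\sigma_2),\operatorname{ord}_t(b\circ\sigma_1-b\circ\sigma_2)\big)$, and the Pl\"ucker‑type identity $(g_1\circ\gamma_1)\,D(g_i,g_j)=D(g_1,g_j)(g_i\circ\gamma_1)-D(g_1,g_i)(g_j\circ\gamma_1)$, which (for $g_1$ a generically chosen element of $I$ realizing both orders $a_1,a_2$) reduces $m_\gamma$ to a minimum of $\operatorname{ord}_t D(g_1,g)$ over $g\in I$. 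The genuinely degenerate situations — arcs contained in $V(I)$, constant arcs, and rank drop of $N_\gamma$ — would be treated directly. Assembling these facts yields $c_B(\sigma_1,\sigma_2)=m_\gamma-a_1-a_2$, and hence the equivalence.
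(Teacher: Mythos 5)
The paper does not prove this theorem; it simply recalls it as Theorem~2.3 of \cite{G2}, so there is no in-paper proof to compare against. Your strategy --- translating both sides into valuative conditions via the curve criterion and matching them through lifts of arcs to the normalized blow-up --- is a coherent route and differs from the analytic-inequality approach of \cite{G2} (which works with the supremum criterion for integral closure of modules along the lines of the Necessity Theorem in Section~\ref{sec4}). As written, however, this is a strategy outline rather than a proof: the implication $h_D\in\overline{I_D}\Rightarrow h\in I_S$ is carried through, while the converse, which you correctly flag as the crux, is only sketched --- the identity $c_B(\sigma_1,\sigma_2)=m_\gamma-a_1-a_2$, the Pl\"ucker reduction, and the degenerate cases (arcs in $V(I)$, constant arcs, rank drop of $N_\gamma$) are all deferred.

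Two concrete points need tightening. First, in the forward direction the curve-criterion expansion $h\circ\gamma_i=\sum_j c_j(g_j\circ\gamma_i)$ with identical coefficients $c_j$ in both coordinates is not automatic: $I_D$ is generated not only by the $(g_j)_D$ but also by the auxiliary vectors $(0,(z_i\circ\pi_1-z_i\circ\pi_2)(g_j\circ\pi_2))$ (see Proposition~\ref{P2}), so a priori the two coordinates of $h_D\circ\gamma$ receive different expansions. The estimate still closes --- the extra contribution to $\phi\circ\sigma_1-\phi\circ\sigma_2$ is $\sum d_{ij}(z_i\circ\gamma_1-z_i\circ\gamma_2)\,\psi_j(\sigma_2)$, of order $\ge c_B(\sigma_1,\sigma_2)$ --- but this must be said. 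Second, the quantity $m_\gamma=\min_{g,g'\in I}\operatorname{ord}_t D(g,g')$ agrees with the order of the $2\times2$ Fitting ideal of $N_\gamma$ only because you minimize over all of $I$ (for instance $D(z_ig_j,g_j)=(z_i\circ\gamma_1-z_i\circ\gamma_2)(g_j\circ\gamma_1)(g_j\circ\gamma_2)$ recovers the minor coming from the auxiliary generator); you note this, but it is precisely here that the identity $c_B=m_\gamma-a_1-a_2$ must actually be verified. Your plan via $\delta(\cO_{B_I(X)})=\delta(\cO_X)+\delta(I')$ and the Leibniz estimate is the right idea, but it also needs to justify the existence of a single $g_0\in I$ realizing both $a_1$ and $a_2$ (generically fine, but a word is needed when $\sigma_1,\sigma_2$ lie in different components of the exceptional fibre), and to pin down that the Pham--Teissier saturation used here is the one relative to the embedding of $B_I(X)$ in $X\times\bP^{r-1}$, so that $c_B$ is indeed computed against $\cO_{B_I(X)}$ rather than against $\cO_X$ alone. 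Until these steps are filled in, this remains an outline rather than a proof.
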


Using the Lipschitz saturation of ideals (and doubles), in \cite{G1}, we defined the infinitesimal Lipschitz conditions for hypersurfaces.

Let us recall two results about the integral closure of modules which will inspire good definitions for Lipschitz saturation of modules.

\vspace{0,5cm}

{\it The ideal sheaf $\rho(\cM)$ on $X\times \bP^{p-1}$ associated to a submodule sheaf $\cM$ of $\cOXp$ (see \cite{GK})}: Given $h=(h_1,...,h_p)\in\cOXp$ and $(x,[T_1,...,T_p])\in X\times\bP^{p-1}$, with $T_i\neq0$, we define $\rho(h)$ as the germ of the analytic map given by $\sum\limits_{j=1}^p h_j(z)\frac{T_j}{T_i}$ which is well-defined on a Zariski open subset of $X\times\bP^{p-1}$ that contains the point $(x,[T_1,...,T_p])$. We define $\rho(\cM)$ as the ideal generated by $\{\rho(h)\mid h\in \cM\}$. The next result, proved in \cite{GK}, gives a strong relation between the integral closure of modules and ideals.

\begin{proposition}[\cite{GK}, Proposition 3.4]\label{proposition GK}
Let $h\in\cO_{X,x}^p$. Then $h\in\overline{\cM}$ at $x$ if, and only if, $\rho(h)\in\overline{\rho(\cM)}$ at all point $(x,[T_1,...,T_p])\in V(\rho(\cM))$.
\end{proposition}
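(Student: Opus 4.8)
The plan is to reduce each side of the equivalence to a statement about analytic arcs, by means of the curve criterion for the integral closure of modules on $X$ and the curve criterion for the integral closure of ideals on $X\times\bP^{p-1}$, and then to match the two arc conditions through the bilinear identity $\rho(h)=\langle h(z),T\rangle/T_i$ at a point $(z,[T])$ with $T_i\neq 0$, where $\langle h,T\rangle:=\sum_{j=1}^p h_jT_j$.

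First I would set up a dictionary between arcs. An analytic arc $\psi\colon(\bC,0)\to X\times\bP^{p-1}$ with $\psi(0)=(x,[T_0])$ decomposes as $\psi=(\phi,[\tau])$, where $\phi\colon(\bC,0)\to(X,x)$ is analytic and $\tau\colon(\bC,0)\to\bC^p\setminus\{0\}$ is analytic with $[\tau(0)]=[T_0]$; choosing $i$ so that $\tau_i(0)\neq 0$, the factor $\tau_i$ is a unit in $\cO_{\bC,0}$ and one gets $\mathrm{ord}_t\,\psi^*\rho(h)=\mathrm{ord}_t\langle h\circ\phi,\tau\rangle$ and $\mathrm{ord}_t\,\psi^*\rho(\cM)=\min_{g\in\cM}\mathrm{ord}_t\langle g\circ\phi,\tau\rangle$ (it suffices to take $g$ in a finite generating set, since $\rho$ is $\cO_X$-linear and $\cM$ is coherent). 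Conversely, every pair $(\phi,\tau)$ assembles into such a $\psi$. Moreover, away from $V(\rho(\cM))$ the ideal $\rho(\cM)$ contains a unit, so $\rho(h)\in\overline{\rho(\cM)}$ holds there automatically; hence imposing this condition at every point of $V(\rho(\cM))$ lying over $x$ is the same as imposing it at every point of $\{x\}\times\bP^{p-1}$, which in arc language amounts to letting $\tau(0)$ be an arbitrary direction of $\bC^p$. By the curve criterion for ideals, the right-hand side of the Proposition is therefore equivalent to: for every analytic $\phi\colon(\bC,0)\to(X,x)$ and every analytic $\tau\colon(\bC,0)\to\bC^p\setminus\{0\}$, one has $\mathrm{ord}_t\langle h\circ\phi,\tau\rangle\ge\min_{g\in\cM}\mathrm{ord}_t\langle g\circ\phi,\tau\rangle$.

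On the module side, the curve criterion for the integral closure of modules says that $h\in\overline{\cM}$ at $x$ if and only if $h\circ\phi\in(\phi^*\cM)\,\cO_{\bC,0}$ for every analytic $\phi\colon(\bC,0)\to(X,x)$ (over the discrete valuation ring $\cO_{\bC,0}$ a submodule of $\cO_{\bC,0}^p$ is already integrally closed, by Smith normal form). It then remains to prove the purely local fact that, for a submodule $\cN\subseteq\cO_{\bC,0}^p$ and $w\in\cO_{\bC,0}^p$, one has $w\in\cN$ if and only if $\mathrm{ord}_t\langle w,\tau\rangle\ge\min_{v\in\cN}\mathrm{ord}_t\langle v,\tau\rangle$ for every analytic $\tau\colon(\bC,0)\to\bC^p\setminus\{0\}$. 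I would prove this by a $\mathrm{GL}_p(\cO_{\bC,0})$ change of coordinates bringing $\cN$ to Smith normal form $\cN=\bigoplus_{l=1}^r t^{a_l}\cO_{\bC,0}$ in the first $r$ coordinates: such a change transports $w$, $\cN$ and the family of functionals $\tau$ bijectively, preserving all the orders in play (since $\langle w,\tau\rangle=\langle Uw,(U^{-1})^{T}\tau\rangle$), so one may assume $\cN$ diagonal; then running $\tau$ through the coordinate vectors forces $\mathrm{ord}_t w_l\ge a_l$ for $l\le r$ and $w_l=0$ for $l>r$, that is $w\in\cN$, while the opposite inequality is immediate. Chaining this with the two curve criteria and the reduction of the previous paragraph gives the equivalence.

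The part I expect to require the most care is this last DVR computation together with the arc bookkeeping around it: one must check that functionals $\tau$ vanishing at $t=0$ may be normalized away by extracting the relevant power of $t$ (which scales both sides of the inequality equally), that arcs $\psi$ contained in a single fibre $\{x\}\times\bP^{p-1}$ (for which $\phi$ is constant) are included, and that the Smith-form substitution genuinely preserves $\min_{v\in\cN}\mathrm{ord}_t\langle v,\tau\rangle$. With these points in hand the proof is a direct chain of equivalences, the substantive work being done by the two curve criteria and by the triviality of the integral closure of modules over a discrete valuation ring.
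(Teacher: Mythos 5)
The paper does not prove this statement; it is quoted directly from Gaffney--Kleiman \cite{GK}, so there is no internal proof to compare against. Judged on its own, your argument is correct and gives a clean, arc-theoretic proof. The three load-bearing steps all check out: (i) an arc $\psi$ into $X\times\bP^{p-1}$ based over $x$ splits as $(\phi,[\tau])$ with $\tau(0)\neq0$, and since the relevant $\tau_i$ is then a unit in $\cO_{\bC,0}$ one gets $\mathrm{ord}_t\,\psi^*\rho(h)=\mathrm{ord}_t\langle h\circ\phi,\tau\rangle$ and likewise for generators of $\cM$; (ii) at points off $V(\rho(\cM))$ the ideal is the unit ideal so the containment is vacuous, which legitimately replaces ``all $(x,[T])\in V(\rho(\cM))$'' by ``all $(x,[T])\in\{x\}\times\bP^{p-1}$''; (iii) the DVR lemma is genuinely the crux, and your Smith-normal-form argument is right: a $U\in\mathrm{GL}_p(\cO_{\bC,0})$ preserves $\langle w,\tau\rangle=\langle Uw,(U^{-1})^T\tau\rangle$, maps nonvanishing $\tau$ to nonvanishing $\tau$, and preserves the minimum order over generators, so one may assume $\cN=\bigoplus_{l\le r}t^{a_l}\cO_{\bC,0}e_l$, and then testing against $\tau=e_l$ forces exactly the membership conditions $\mathrm{ord}_t w_l\ge a_l$ for $l\le r$ and $w_l=0$ for $l>r$. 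The observation that $\overline{\cN}=\cN$ over $\cO_{\bC,0}$ is what lets the curve criterion on the module side close the loop. As far as I can tell, Gaffney and Kleiman argue via the Rees algebra of $\cM$ and its associated $\mathrm{Proj}$ rather than by an explicit arc dictionary, so your route is the more elementary one: it trades the graded-algebra machinery for a hands-on reduction to the $\mathrm{GL}_p(\cO_{\bC,0})$ normal form, at the cost of having to track the arc bookkeeping (constant arcs, arcs inside a fibre $\{x\}\times\bP^{p-1}$, normalization of $\tau$) that you already flag.
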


In \cite{G3} there is another way to make a link between the integral closure of modules and ideals, using minors of a matrix of generators of $\cM$. 

Let $\cM$ be a submodule sheaf of $\cOXp$, and $[\cM]$ a matrix of generators of $\cM$. For each $k$, let $J_k(\cM)$ denote the ideal of $\cO_X$ generated by the $k\times k$ minors of $[\cM]$. If $h\in\cOXp$, let $(h,\cM)$ be the submodule generated by $h$ and $\cM$.

\begin{proposition}[\cite{G3}, Corollary 1.8]\label{proposition G}
Suppose $(X,x)$ is a complex analytic germ with irreducible components $\{V_i\}$. Then, $h\in\overline{\cM}$ at $x$ if, and only if, $J_{k_i}((h,\cM_i))\subseteq \overline {J_{k_i}(\cM_i)}$ at $x$, where $\cM_i$ is the submodule of $\cO_{V_i,x}^p$ induced from $\cM$ and $k_i$ is the generic rank of $(h,\cM_i)$ on $V_i$. 
\end{proposition}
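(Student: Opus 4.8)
The assertion is local, and both conditions split as a conjunction over the irreducible components of $(X,x)$ --- on the left by the curve criterion for integral closure (every arc $\phi\colon(\bC,0)\to(X,x)$ factors through some $V_i$), on the right because the $J_{k_i}$ are formed on each $V_i$ separately --- so the plan is to fix one component, rename it $X$, write $\cM,h$ for the induced data, and let $k$ be the generic rank of $(h,\cM)$ on $X$. The generic rank of $\cM$ on $X$ is then $k$ or $k-1$. If it is $k-1$, all $k\times k$ minors of a matrix of generators of $\cM$ vanish identically, so $J_k(\cM)=0=\overline{J_k(\cM)}$ while $J_k((h,\cM))\neq0$; thus the minors inclusion fails, and at the same time $h\notin\overline{\cM}$, since otherwise $(h,\cM)\subseteq\overline{\cM}$ would have generic rank at most that of $\cM$, i.e.\ $k\le k-1$. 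So I may assume $\cM$ and $(h,\cM)$ both have generic rank $k$ on $X$.

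The key step is a linking identity between the order of the minors ideal and a module invariant along an arc. Let $W\subsetneq X$ be the (proper, analytic) locus where the rank of a matrix of generators of $(h,\cM)$ drops below $k$, and let $\phi$ be an arc with $\phi(\bC)\not\subseteq W$. Over the discrete valuation ring $\bC\{t\}$ we have $\phi^{*}\cM\subseteq\phi^{*}(h,\cM)$, both submodules of $\bC\{t\}^{p}$ of rank $k$. Smith normal form for $\phi^{*}(h,\cM)$ in $\bC\{t\}^{p}$ gives $\mathrm{ord}_{t}\,\phi^{*}J_{k}((h,\cM))=a_{1}+\cdots+a_{k}$, where $t^{a_{1}},\dots,t^{a_{k}}$ are its elementary divisors; applying Smith normal form again to $\phi^{*}\cM$ inside the free module $\phi^{*}(h,\cM)$ gives $\mathrm{ord}_{t}\,\phi^{*}J_{k}(\cM)=a_{1}+\cdots+a_{k}+c_{1}+\cdots+c_{k}$ with all $c_{j}\ge0$, and the $c_{j}$ all vanish precisely when $\phi^{*}\cM=\phi^{*}(h,\cM)$, i.e.\ precisely when $\phi^{*}h\in\phi^{*}\cM\cdot\bC\{t\}$. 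Therefore, for every arc with $\phi(\bC)\not\subseteq W$,
\begin{equation*}
\phi^{*}h\in\phi^{*}\cM\cdot\bC\{t\}\ \Longleftrightarrow\ \mathrm{ord}_{t}\,\phi^{*}J_{k}((h,\cM))\ \ge\ \mathrm{ord}_{t}\,\phi^{*}J_{k}(\cM).
\end{equation*}

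The implication ``$h\in\overline{\cM}\Rightarrow$ minors inclusion'' is then immediate: $h\in\overline{\cM}$ forces $\phi^{*}h\in\phi^{*}\cM\cdot\bC\{t\}$ for every arc, hence the order inequality above holds for every arc missing $W$, and for arcs inside $W$ both orders are $+\infty$ (once the rank drops below $k$, $J_{k}((h,\cM))$, hence $J_{k}(\cM)$, pulls back to $0$); by the curve criterion for the ideals $J_{k}(\cM),J_{k}((h,\cM))$ this says exactly $J_{k}((h,\cM))\subseteq\overline{J_{k}(\cM)}$. For the converse, assume the minors inclusion. For every arc with $\phi(\bC)\not\subseteq W$ the order inequality holds, hence $\phi^{*}h\in\phi^{*}\cM\cdot\bC\{t\}$; so, by the curve criterion, only the arcs contained in $W$ remain to be checked.

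This is the main obstacle, because on $W$ the $k\times k$ minors vanish identically and give no direct control. My plan here is a specialization argument. Suppose $\phi(\bC)\subseteq W$ but $\phi^{*}h\notin\phi^{*}\cM\cdot\bC\{t\}$; then the class of $\phi^{*}h$ in the finitely generated $\bC\{t\}$-module $Q=\bC\{t\}^{p}/\phi^{*}\cM\cdot\bC\{t\}$ is nonzero, so by Krull's intersection theorem it is not in $t^{N}Q$ for some $N$. Perturbing $\phi$ to an arc $\gamma$ in $X$ through $x$ that agrees with $\phi$ modulo $t^{N}$ and has $\gamma(\bC)\not\subseteq W$, a short computation shows $\gamma^{*}h\notin\gamma^{*}\cM\cdot\bC\{t\}$, contradicting the case already settled. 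The delicate point --- the part I expect to need genuine work --- is to produce such a $\gamma$ inside a possibly singular $X$ with prescribed contact with $\phi$, equivalently to show that failure of membership propagates to the generic nearby arc; this is in effect an instance of the principle of specialization of integral dependence. Granting it, membership holds along every arc, so $h\in\overline{\cM}$, which finishes the proof.
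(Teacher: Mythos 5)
The paper does not prove this proposition; it is quoted from \cite{G3} (Corollary~1.8 there) and is later used as a black box (for instance in the proof of Proposition~\ref{P18}). So there is no in-paper proof to compare against, and the remarks below are a review of your argument on its own terms.

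Your reduction to irreducible components, the disposal of the case in which $\cM$ has generic rank $k-1$, and the Smith-normal-form bookkeeping over the discrete valuation ring $\bC\{t\}$ are all correct. In particular, for an arc $\phi$ along which $(h,\cM)$ pulls back to a rank-$k$ submodule, the identity
$$\mathrm{ord}_t\,\phi^*J_k(\cM)\;=\;\mathrm{ord}_t\,\phi^*J_k\bigl((h,\cM)\bigr)\;+\;\mathrm{length}_{\bC\{t\}}\bigl(\phi^*(h,\cM)/\phi^*\cM\bigr)$$
is exactly right, and it cleanly gives both the implication $h\in\overline{\cM}\Rightarrow J_k((h,\cM))\subseteq\overline{J_k(\cM)}$ and, conversely, membership $\phi^*h\in\phi^*\cM\cdot\bC\{t\}$ for every arc whose image is not contained in the degeneracy locus $W$.

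The gap is the one you flag, and it is genuine: arcs $\phi$ with $\phi(\bC)\subseteq W$. The step ``perturb $\phi$ to $\gamma$ in $X$ through $x$ with $\gamma\equiv\phi\ (\mathrm{mod}\ t^N)$ and $\gamma(\bC)\not\subseteq W$'' hides two nontrivial facts. First, the existence of such a $\gamma$ on a possibly singular $X$: this is a statement about the fibers of the truncation map from the arc space of $X$ to its $N$-jet space and needs something like Artin approximation or a direct argument; it is not automatic when $X$ is singular. Second, the persistence of non-membership: since $\phi^*\cM$ itself changes as $\phi$ is perturbed, the inference from $\phi^*h\notin\phi^*\cM\cdot\bC\{t\}$ to $\gamma^*h\notin\gamma^*\cM\cdot\bC\{t\}$ requires comparing the quotients $\bC\{t\}^p/\phi^*\cM$ and $\bC\{t\}^p/\gamma^*\cM$, which Krull's intersection theorem alone does not do. These are precisely the points that the principle of specialization of integral dependence is designed to handle, so invoking it here is circular in spirit: you would be assuming machinery at least as deep as the result you are trying to prove. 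The standard ways to close this gap are either to prove a refined curve criterion (that it suffices to test arcs whose generic point avoids a fixed proper analytic subset, after which your Smith-form computation finishes), or to bypass arcs entirely and reduce to ideals via the Rees algebra of $\cM$ or via the map $\rho$ of Proposition~\ref{proposition GK}, which is the route taken in \cite{G3} and \cite{GK}.
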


Now we recall some basic results about the double of a module developed in \cite{SG}.

Let $X \subseteq \mathbb{C}^n$ be an analytic space, and let $\cM$ be an $\cO_X $-submodule of $\cO_X^p$. Consider the projection maps $\pi_1,\pi_2: X \times X \rightarrow X$. We assume that $\cM$ is finitely generated by global sections.

\begin{definition}
	Let $h\in\cOXp$. The double of $h$ is defined as the element
	
	\noindent $h_D:=(h\circ\pi_1,h\circ\pi_2) \in\cO_{X\times X}^{2p}$.
	
	The double of $\cM$ is denoted by $\cM_D$, and is defined as the $\cO_{X\times X}$-submodule of $\cO_{X\times X}^{2p}$ generated by $\{h_D \mid  h\in \cM\}$.
	
\end{definition}

Consider $z_1,...,z_n$ the coordinates on $\mathbb{C}^n$. The next lemma is a useful tool to deal with the double.

\begin{lemma}[\cite{SG}, Lemma 2.2]\label{L1} Suppose $\alpha\in\cO_X$ and $h\in\cOXp$. Then:

	\begin{enumerate}
		\item $(\alpha h)_D=-(0,(\alpha\circ\pi_1-\alpha\circ\pi_2)(h\circ\pi_2))+(\alpha\circ\pi_1)h_D$;
		
		\item $(0,(\alpha\circ\pi_1-\alpha\circ\pi_2)(h\circ\pi_2))\in \cM_D$, for all $h\in \cM$ and $\alpha\in\cO_X$;
		
		\item $\alpha\circ\pi_1-\alpha\circ\pi_2 \in I(\Delta(X))=(z_1\circ\pi_1-z_1\circ\pi_2,\ldots, z_n\circ\pi_1-z_n\circ\pi_2)$, for all $\alpha\in\cO_X$;
		
		\item $(g+h)_D=g_D+h_D$, for all $g,h\in\cOXp$. 
	\end{enumerate}
\end{lemma}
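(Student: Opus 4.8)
The plan is to verify the four assertions in turn; items (1), (2), and (4) are immediate from the definition of the double, while item (3) is the only one that requires a genuine argument.

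Item (4) follows at once: since composition with $\pi_1$ and $\pi_2$ is additive and acts componentwise, $(g+h)_D = \bigl((g+h)\circ\pi_1,(g+h)\circ\pi_2\bigr) = (g\circ\pi_1,g\circ\pi_2) + (h\circ\pi_1,h\circ\pi_2) = g_D + h_D$. For item (1), I would expand $(\alpha h)_D = \bigl((\alpha\circ\pi_1)(h\circ\pi_1),\,(\alpha\circ\pi_2)(h\circ\pi_2)\bigr)$ and $(\alpha\circ\pi_1)h_D = \bigl((\alpha\circ\pi_1)(h\circ\pi_1),\,(\alpha\circ\pi_1)(h\circ\pi_2)\bigr)$; subtracting, the first components cancel and the second yields $(\alpha\circ\pi_2-\alpha\circ\pi_1)(h\circ\pi_2) = -(\alpha\circ\pi_1-\alpha\circ\pi_2)(h\circ\pi_2)$, which is exactly the claimed identity. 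For item (2), I would read item (1) backwards: $\bigl(0,(\alpha\circ\pi_1-\alpha\circ\pi_2)(h\circ\pi_2)\bigr) = (\alpha\circ\pi_1)h_D - (\alpha h)_D$. As $\cM$ is an $\cO_X$-module, $h$ and $\alpha h$ both lie in $\cM$, so $h_D,(\alpha h)_D\in\cM_D$; since $\cM_D$ is an $\cO_{X\times X}$-module and $\alpha\circ\pi_1\in\cO_{X\times X}$, the right-hand side lies in $\cM_D$.

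The substance is in item (3). Lift $\alpha$ to a germ $\tilde\alpha\in\cO_{\mathbb{C}^n,x}$, and let $z,w$ denote the coordinates on the two copies of $\mathbb{C}^n$. By the integral (Hadamard) form of Taylor's theorem,
\[
\tilde\alpha(z)-\tilde\alpha(w)=\sum_{i=1}^n (z_i-w_i)\,g_i(z,w),\qquad g_i(z,w):=\int_0^1 \frac{\partial\tilde\alpha}{\partial z_i}\bigl(w+t(z-w)\bigr)\,dt,
\]
where each $g_i$ is a germ of analytic function at $(x,x)$, the integrand being jointly analytic in $(t,z,w)$ near $[0,1]\times\{(x,x)\}$ so that one may differentiate under the integral sign. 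Restricting this identity along $X\times X\hookrightarrow\mathbb{C}^n\times\mathbb{C}^n$ and identifying $z_i$ with $z_i\circ\pi_1$, $w_i$ with $z_i\circ\pi_2$, and $\tilde\alpha(z),\tilde\alpha(w)$ with $\alpha\circ\pi_1,\alpha\circ\pi_2$, one gets $\alpha\circ\pi_1-\alpha\circ\pi_2=\sum_i (z_i\circ\pi_1-z_i\circ\pi_2)(g_i|_{X\times X})$, which places $\alpha\circ\pi_1-\alpha\circ\pi_2$ in the ideal $(z_1\circ\pi_1-z_1\circ\pi_2,\ldots,z_n\circ\pi_1-z_n\circ\pi_2)$. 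That this ideal equals $I(\Delta(X))$ is then a separate, standard fact: $I(\Delta(\mathbb{C}^n))$ is generated by the $z_i\circ\pi_1-z_i\circ\pi_2$ (the diagonal in $\mathbb{C}^n\times\mathbb{C}^n$ being a linear subspace cut out by these equations), and one passes to $X\times X$ by quotienting out $I(X\times X)$.

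The step I expect to be the main obstacle — modest as it is — is item (3): one must be sure the coefficients $g_i$ produced are genuinely holomorphic germs (handled by the integral representation) and keep the bookkeeping straight when transporting the $\mathbb{C}^n\times\mathbb{C}^n$ identity down to $X\times X$. Everything else is formal manipulation of the definition $h_D=(h\circ\pi_1,h\circ\pi_2)$.
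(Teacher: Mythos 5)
Your proof is correct, and since the paper merely recalls this lemma from \cite{SG} without reproducing a proof, there is no in-text argument to compare against; what you give is the natural argument. Items (1), (2), (4) are straightforward bookkeeping exactly as you describe, and the Hadamard (integral Taylor) expansion is the standard device for item (3), producing holomorphic coefficients $g_i$ on a neighborhood of $(x,x)$ in $\bC^n\times\bC^n$ that restrict to $\cO_{X\times X,(x,x)}$.

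One small point worth tightening in (3): your Hadamard argument shows $\alpha\circ\pi_1-\alpha\circ\pi_2\in(z_1\circ\pi_1-z_1\circ\pi_2,\ldots,z_n\circ\pi_1-z_n\circ\pi_2)$, but the asserted equality of that ideal with $I(\Delta(X))$ needs slightly more than ``quotienting out $I(X\times X)$,'' which a priori only gives one inclusion. The clean way to finish is to observe that
\[
\cO_{X\times X,(x,x)}/(z_1\circ\pi_1-z_1\circ\pi_2,\ldots,z_n\circ\pi_1-z_n\circ\pi_2)\cong\cO_{X,x},
\]
because imposing $z_i\circ\pi_1=z_i\circ\pi_2$ identifies the two copies of $I(X)$ pulled back along $\pi_1$ and $\pi_2$; since $\cO_{X,x}$ is reduced, the ideal on the left is radical, and as it cuts out $\Delta(X)$ set-theoretically it must equal $I(\Delta(X))$. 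With that observation in place your proof is complete.
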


In \cite{SG} we see that it is possible to obtain a set of generators for $\cM_D$ from a set of generators of $\cM$.

\begin{proposition}[\cite{SG}, Proposition 2.3]\label{P2}
	Suppose that $\cM$ is generated by $\{h_1,\ldots,h_r \}$. Then, the following sets are generators of $\cM_D$:
	\begin{enumerate}
		
		\item $\cB=\{(h_1)_D,\ldots,(h_r)_D\} \cup \{(0_{\cO_{X\times X}^{p}},(z_i\circ\pi_1-z_i\circ\pi_2)(h_j\circ\pi_2))$ $|$ $i\in\{1,\ldots,n\}$ and $j\in\{1,\ldots,r\}\}$.
		
		\item $\cB'=\{(h_1)_D,\ldots,(h_r)_D\} \cup \{((z_i\circ\pi_1-z_i\circ\pi_2)(h_j\circ\pi_1),0_{\cO_{X\times X}^{p}}))$ $|$ $i\in\{1,\ldots,n\}$ and $j\in\{1,\ldots,r\}\}$.
		
		\item $\cB''=\{(h_1)_D,\ldots,(h_r)_D\} \cup \{(z_ih_j)_D$ $|$ $i\in\{1,\ldots,n\}$ and $j\in\{1,\ldots,r\}\}$.
		
	\end{enumerate}
	
\end{proposition}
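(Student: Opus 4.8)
The plan is to show each of the three sets generates the same submodule as $\{h_D : h \in \cM\}$, and the natural strategy is to prove $\cB$ works first, then deduce $\cB'$ and $\cB''$ by comparing them with $\cB$ using Lemma \ref{L1}. First I would observe that $\cM_D$ is generated by all elements $h_D$ with $h \in \cM$, and an arbitrary such $h$ can be written $h = \sum_{j=1}^r \alpha_j h_j$ with $\alpha_j \in \cO_X$. Applying Lemma \ref{L1}(4) repeatedly gives $h_D = \sum_j (\alpha_j h_j)_D$, and then Lemma \ref{L1}(1) rewrites each summand as
\[
(\alpha_j h_j)_D = -\bigl(0,\ (\alpha_j\circ\pi_1-\alpha_j\circ\pi_2)(h_j\circ\pi_2)\bigr) + (\alpha_j\circ\pi_1)\,(h_j)_D .
\]
The second term is an $\cO_{X\times X}$-multiple of $(h_j)_D$. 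For the first term, Lemma \ref{L1}(3) tells us $\alpha_j\circ\pi_1-\alpha_j\circ\pi_2$ lies in the ideal generated by $z_i\circ\pi_1 - z_i\circ\pi_2$, say $\alpha_j\circ\pi_1-\alpha_j\circ\pi_2 = \sum_i \beta_{ij}(z_i\circ\pi_1-z_i\circ\pi_2)$ with $\beta_{ij}\in\cO_{X\times X}$; substituting shows the first term is an $\cO_{X\times X}$-combination of the elements $\bigl(0,\ (z_i\circ\pi_1-z_i\circ\pi_2)(h_j\circ\pi_2)\bigr)$. Hence every $h_D$ lies in the module generated by $\cB$. The reverse inclusion $\langle\cB\rangle \subseteq \cM_D$ is immediate: each $(h_j)_D$ is a generator of $\cM_D$ by definition, and each element $\bigl(0,\ (z_i\circ\pi_1-z_i\circ\pi_2)(h_j\circ\pi_2)\bigr)$ lies in $\cM_D$ by Lemma \ref{L1}(2) (taking $\alpha = z_i$). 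This proves $\cB$ generates $\cM_D$.

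For $\cB'$, I would run the same argument but use a symmetric variant of Lemma \ref{L1}(1)--(2) with the roles of $\pi_1$ and $\pi_2$ interchanged: writing $(\alpha h)_D = \bigl((\alpha\circ\pi_1-\alpha\circ\pi_2)(h\circ\pi_1),\,0\bigr) + (\alpha\circ\pi_2)h_D$, which follows by the same elementary manipulation, together with the observation that $\bigl((z_i\circ\pi_1-z_i\circ\pi_2)(h_j\circ\pi_1),\,0\bigr) \in \cM_D$ (again a consequence of Lemma \ref{L1}(1),(2) applied with $\alpha=z_i$, since both $(z_ih_j)_D$ and $\bigl(0,(z_i\circ\pi_1-z_i\circ\pi_2)(h_j\circ\pi_2)\bigr)$ lie in $\cM_D$, and their difference up to an $\cO_{X\times X}$-multiple of $(h_j)_D$ is the desired element). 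Alternatively one shows directly that $\langle\cB\rangle = \langle\cB'\rangle$: the difference of the $i,j$-th extra generators of $\cB$ and $\cB'$ is $(z_i\circ\pi_1-z_i\circ\pi_2)\bigl(h_j\circ\pi_1, -h_j\circ\pi_2\bigr)$... here care is needed with signs, so I would instead note $\bigl((z_i\circ\pi_1-z_i\circ\pi_2)(h_j\circ\pi_1),0\bigr) + \bigl(0,(z_i\circ\pi_1-z_i\circ\pi_2)(h_j\circ\pi_2)\bigr) = (z_i\circ\pi_1-z_i\circ\pi_2)\,(h_j)_D \in \cM_D$, which gives each $\cB'$-generator as a combination of $\cB$-generators and a multiple of $(h_j)_D$, hence $\langle\cB'\rangle\subseteq\langle\cB\rangle = \cM_D$; the reverse inclusion is the analogous identity read the other way.

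For $\cB''$, I would use Lemma \ref{L1}(1) with $\alpha = z_i$ and $h = h_j$ directly: $(z_ih_j)_D = -\bigl(0,(z_i\circ\pi_1-z_i\circ\pi_2)(h_j\circ\pi_2)\bigr) + (z_i\circ\pi_1)(h_j)_D$. This exhibits each $\cB''$-generator as an $\cO_{X\times X}$-combination of the $\cB$-generators, so $\langle\cB''\rangle\subseteq\langle\cB\rangle$; conversely the same identity solves for $\bigl(0,(z_i\circ\pi_1-z_i\circ\pi_2)(h_j\circ\pi_2)\bigr)$ in terms of $(z_ih_j)_D$ and $(h_j)_D$, giving $\langle\cB\rangle\subseteq\langle\cB''\rangle$. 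The only real subtlety in the whole argument, and the step I would be most careful with, is the passage from $\alpha\in\cO_X$ to the membership $\alpha\circ\pi_1-\alpha\circ\pi_2\in I(\Delta(X))$ and the fact that this ideal is generated by the coordinate differences $z_i\circ\pi_1-z_i\circ\pi_2$ — i.e. Lemma \ref{L1}(3) — since this is what converts an arbitrary coefficient $\alpha_j$ (of which there are infinitely many) into a finite list of generators; but that lemma is already available, so the proof is a bookkeeping exercise built on Lemma \ref{L1}.
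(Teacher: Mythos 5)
Your proof is correct and is the natural argument built on Lemma~\ref{L1}, which is exactly the tool the paper sets up immediately before stating the proposition (the proof itself is deferred to \cite{SG}). The decomposition via L1(4), L1(1) and L1(3) for $\cB$, the identity $\bigl((z_i\circ\pi_1-z_i\circ\pi_2)(h_j\circ\pi_1),0\bigr)+\bigl(0,(z_i\circ\pi_1-z_i\circ\pi_2)(h_j\circ\pi_2)\bigr)=(z_i\circ\pi_1-z_i\circ\pi_2)(h_j)_D$ for $\cB'$, and L1(1) with $\alpha=z_i$ for $\cB''$ are all sound and match the intended route.
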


The following proposition gives a link between the integral closure of the double of a module and the integral closure of the original module.

\begin{proposition}[\cite{SG}, Proposition 2.9]\label{P12}
	Let $h\in\cOXp$. 
	\begin{enumerate}
		\item If $h_D\in\overline{\cM_D}$ at $(x,x')$ then $h\in\overline{\cM}$ at $x$ and $x'$. 
		\item If $h_D\in (\cM_D)^{\dagger}$ at $(x,x')$ then $h\in \cM^{\dagger}$ at $x$ and $x'$.
	\end{enumerate}
\end{proposition}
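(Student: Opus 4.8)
The plan is to reduce each of the two assertions to the corresponding statement for $\cM$ on $X$ by two formal operations: pushing $\cM_D$ and $h_D$ forward along a coordinate projection of the ambient free module $\cO_{X\times X}^{2p}$ onto one of its two $\cO_{X\times X}^{p}$-blocks, and then pulling the result back along a section of $\pi_1$, respectively $\pi_2$. The only inputs are (i) that both $\overline{(\;\cdot\;)}$ and $(\;\cdot\;)^{\dagger}$ are stable under $\cO$-linear maps of free modules and under analytic pull-backs — which is immediate from their respective curve criteria — and (ii) the explicit generating sets $\cB$ and $\cB'$ of $\cM_D$ supplied by Proposition \ref{P2}.

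To handle the point $x$, I would use the generating set $\cB$. Every generator in $\cB$ other than the $(h_j)_D$ has its first $\cO_{X\times X}^{p}$-block equal to $0$, so the projection $p_1\colon\cO_{X\times X}^{2p}\to\cO_{X\times X}^{p}$ onto the first $p$ coordinates carries $\cM_D$ onto the submodule generated by $\{h_j\circ\pi_1\}$, i.e.\ onto $\pi_1^{*}\cM$, and carries $h_D$ to $h\circ\pi_1$. Since $p_1$ is $\cO_{X\times X}$-linear, $h_D\in\overline{\cM_D}$ at $(x,x')$ gives $h\circ\pi_1\in\overline{\pi_1^{*}\cM}$ at $(x,x')$. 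Composing with the section $s\colon(X,x)\to(X\times X,(x,x'))$, $s(z)=(z,x')$, of $\pi_1$, and noting $s^{*}(h\circ\pi_1)=h$ and $s^{*}(\pi_1^{*}\cM)=\cM$, one concludes $h\in\overline{\cM}$ at $x$. For the point $x'$ I would run the mirror argument with $\cB'$, whose extra generators have vanishing \emph{second} block: the projection $p_2$ onto the last $p$ coordinates carries $\cM_D$ onto $\pi_2^{*}\cM$ and $h_D$ to $h\circ\pi_2$, and pulling back along the section $z\mapsto(x,z)$ of $\pi_2$ yields $h\in\overline{\cM}$ at $x'$. This settles (1).

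For (2) I would repeat the same two steps with $\overline{(\;\cdot\;)}$ replaced throughout by $(\;\cdot\;)^{\dagger}$, using that the $\dagger$-closure is likewise stable under $\cO$-linear maps and analytic pull-backs. If one wishes to bypass these stability statements, the cleanest route is to work directly with the curve criterion: given any test curve $\phi\colon(\bC,0)\to(X,x)$, the composite $s\circ\phi$ is a curve in $X\times X$ through $(x,x')$, so the integral- (resp.\ $\dagger$-) dependence of $h_D$ on $\cM_D$ specializes along $s\circ\phi$; applying $p_1$ and using $\pi_1\circ s=\mathrm{id}$ (so that $\pi_1^{*}\cM\circ(s\circ\phi)=\cM\circ\phi$) then delivers the required dependence of $h\circ\phi$ on $\cM\circ\phi$, and symmetrically at $x'$.

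I do not expect a genuine obstacle here; the argument is essentially formal. The one place that needs care is the bookkeeping with the two generating sets — $\cB$ must be used to kill the first block when reading off $p_1$, and $\cB'$ to kill the second block when reading off $p_2$ — together with the (routine) verification that a coordinate projection of a free module and the composition with a section of $\pi_i$ are indeed operations under which integral closure and $\dagger$-closure are preserved.
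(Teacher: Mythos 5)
Your argument is correct and carries no gaps. The two ingredients you rely on — stability of integral closure (and of the $\dagger$-relation) under $\cO$-linear maps of free modules and under analytic pull-backs, both immediate from the curve criterion — are sound, and the observation that the extra generators in $\cB$ (resp.\ $\cB'$) vanish under $p_1$ (resp.\ $p_2$), so that $p_1(\cM_D)=\pi_1^{*}\cM$ and $p_1(h_D)=\pi_1^{*}h$, is exactly what makes the reduction work; composing with the section $z\mapsto(z,x')$ of $\pi_1$ (resp.\ $z\mapsto(x,z)$ of $\pi_2$) then cleanly lands you at $x$ (resp.\ $x'$). Since the paper only cites this as \cite{SG}, Proposition~2.9 and does not reproduce a proof, a line-by-line comparison is not possible, but your route is the natural one dictated by the block structure of $[\cM_D]$ and is almost certainly what \cite{SG} does; the alternative you sketch at the end — specializing directly along $s\circ\phi$ for a test curve $\phi$ and reading off the first (resp.\ last) $p$ components — is a legitimate reformulation of the same idea and equally valid.
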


In the next theorem we compute the generic rank of the double of a module.

\begin{theorem}[\cite{SG}, Proposition 2.5]\label{T2.9}
	Let $(X,x)$ be an irreducible analytic complex germ of dimension $d\ge 1$, and $\cM\subseteq\cO_{X,x}^p$ a submodule of generic rank $k$. Then $\cM_D$ has generic rank $2k$ at $(x,x)$. 
\end{theorem}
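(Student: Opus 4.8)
The plan is to read the generic rank of $\cM_D$ directly off the explicit generating set $\cB$ furnished by Proposition \ref{P2}. Fix generators $h_1,\dots,h_r$ of $\cM$, let $[\cM]$ be the $p\times r$ matrix with columns $h_j$, and write $M(c):=\operatorname{span}_{\bC}\{h_1(c),\dots,h_r(c)\}\subseteq\bC^{p}$ for a point $c\in X$ near $x$. Since $(X,x)$ is irreducible, the germ $(X\times X,(x,x))$ is irreducible of dimension $2d$, so the generic rank of $\cM_D$ at $(x,x)$ is the rank of a matrix of generators of $\cM_D$ at a generic point $(a,b)$ near $(x,x)$. Let $Z:=\{a\in X\mid\operatorname{rank}[\cM](a)<k\}$, a proper analytic subset of $X$ because $k$ is the generic rank of $\cM$, and let $\Delta(X)\subseteq X\times X$ be the diagonal, of dimension $d<2d$ since $d\ge 1$. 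Then
\[
U:=(X\times X)\setminus\bigl(\Delta(X)\cup\pi_1^{-1}(Z)\cup\pi_2^{-1}(Z)\bigr)
\]
is the complement of a proper analytic subset of the irreducible germ $(X\times X,(x,x))$, hence dense and Zariski-open; for $(a,b)\in U$ one has $a\ne b$ and $\dim M(a)=\dim M(b)=k$.

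Next I would compute the span in $\bC^{2p}=\bC^{p}\oplus\bC^{p}$ of the generators in $\cB$ evaluated at a fixed point $(a,b)\in U$. The doubles $(h_j)_D$ evaluate to $(h_j(a),h_j(b))$, so they span the ``graph'' subspace $G:=\{(\sum_j c_jh_j(a),\sum_j c_jh_j(b))\mid c_j\in\bC\}$, of dimension $k$. Because $a\ne b$, some coordinate difference $z_{i_0}\circ\pi_1-z_{i_0}\circ\pi_2$ is nonzero at $(a,b)$, so the generators $(0_{\cO_{X\times X}^{p}},(z_{i_0}\circ\pi_1-z_{i_0}\circ\pi_2)(h_j\circ\pi_2))$ evaluate to nonzero scalar multiples of $(0,h_j(b))$, while those with $i\ne i_0$ contribute nothing new; hence the evaluated generators span $G+\bigl(0\oplus M(b)\bigr)$. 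This equals $M(a)\oplus M(b)$: the inclusion ``$\subseteq$'' is clear, and conversely, given $v=\sum_jc_jh_j(a)\in M(a)$ and $w\in M(b)$, we have $(v,w)=(\sum_jc_jh_j(a),\sum_jc_jh_j(b))+(0,\,w-\sum_jc_jh_j(b))\in G+(0\oplus M(b))$. Therefore the rank of $\cM_D$ at $(a,b)$ is $\dim\bigl(M(a)\oplus M(b)\bigr)=2k$.

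Since $(a,b)\in U$ was arbitrary and $U$ is dense and Zariski-open, a matrix of generators of $\cM_D$ has rank constantly $2k$ on $U$, so the generic rank of $\cM_D$ at $(x,x)$ is $2k$. I do not expect a genuine obstacle in this argument: all the structure is already packaged into the generating set $\cB$. The two steps needing care are that $U$ is really dense — which uses both irreducibility of $X$ and the hypothesis $d\ge 1$, so that $\dim\Delta(X)<\dim(X\times X)$ and the diagonal is not all of $X\times X$ — and the short linear-algebra step showing that, at a generic point, the extra generators of $\cB$ promote the $k$-dimensional graph $G$ to the full direct sum $M(a)\oplus M(b)$; this is precisely the role of those generators, and without them the rank would only be $k$.
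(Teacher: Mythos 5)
Your argument is correct and self-contained; note, though, that the paper itself does not prove this result --- Theorem~\ref{T2.9} is quoted from \cite{SG} (Proposition~2.5) as background, so there is no in-paper proof to compare against. Your approach of evaluating the explicit generating set $\cB$ from Proposition~\ref{P2} at a generic point of $X\times X$ off the diagonal and off $\pi_i^{-1}(Z)$ is a perfectly sound way to read off the generic rank, and the key linear-algebra step --- that $G+(0\oplus M(b))=M(a)\times M(b)$ once some $z_{i_0}(a)\neq z_{i_0}(b)$ --- is exactly right. The irreducibility of the germ $(X\times X,(x,x))$ and the hypothesis $d\ge 1$ are used correctly to make $U$ a dense Zariski-open set.

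One small inaccuracy, which does not affect the conclusion: you assert that $G=\{(\sum_j c_jh_j(a),\sum_j c_jh_j(b))\}$ has dimension $k$, but in fact $G$ is the image of a linear map $\bC^r\to\bC^{2p}$ and its generic dimension is $\min(r,2k)$, not $k$ (it equals $k$ only when $\cM$ is free of rank $k$). Your argument never uses $\dim G=k$; it only uses that $G$ surjects onto $M(a)$ under the first projection, which together with $0\oplus M(b)$ gives the $2k$-dimensional space $M(a)\times M(b)$. You may want to either drop the dimension claim for $G$ or replace it with $\dim G\ge k$.
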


\begin{corollary}[\cite{SG}, Corollary 2.6]\label{C2.10}
	Let $\{V_i\}$ be the irreducible components of $(X,x)$. For each $i$, if $\cM$ has generic rank $k_i$ on $V_i$ then $\cM_D$ has generic rank $2k_i$ on $V_i\times V_i$. In particular, if $\cM$ has generic rank $k$ on each component of $X$ then $\cM_D$ has generic rank $2k$ on each component of $X\times X$.
\end{corollary}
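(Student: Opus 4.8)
The plan is to reduce the corollary to Theorem~\ref{T2.9} by working component by component. First I would recall that the generic rank of a module on a reducible germ is computed separately on each irreducible component, and that if $\{V_i\}$ are the irreducible components of $(X,x)$, then the irreducible components of the germ $(X\times X,(x,x))$ are exactly the products $V_i\times V_j$. So it suffices to understand the generic rank of $\cM_D$ on each such product $V_i\times V_j$.

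Next I would split into the diagonal-type pieces $V_i\times V_i$ and the off-diagonal pieces $V_i\times V_j$ with $i\neq j$. For $V_i\times V_i$, the submodule $\cM_i\subseteq\cO_{V_i,x}^p$ induced from $\cM$ has generic rank $k_i$ by hypothesis, and one checks that the double $(\cM_i)_D$ agrees with the module induced on $V_i\times V_i$ from $\cM_D$ (this uses the explicit generators of Proposition~\ref{P2}, whose restriction to $V_i\times V_i$ is precisely a generating set of $(\cM_i)_D$). Hence Theorem~\ref{T2.9} applies directly and gives generic rank $2k_i$ on $V_i\times V_i$. For the off-diagonal components, I would invoke the decomposition result mentioned in the introduction (the stalk of $\cM_D$ at $(x,x')$ with $x\neq x'$ splits as $\cM_x\oplus\cM_{x'}$); at a generic point $(y,y')$ of $V_i\times V_j$ the stalk is $\cM_{i,y}\oplus\cM_{j,y'}$, whose rank is $k_i+k_j$. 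In particular this is not relevant to the ``in particular'' clause, which only concerns the components $V_i\times V_i$ when all $k_i=k$, giving rank $2k$ there; but I would still record the off-diagonal value for completeness, or else simply note that only the products $V_i\times V_i$ need to be treated for the stated conclusion.

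Finally, assembling these: if $\cM$ has generic rank $k$ on every component $V_i$, then by the previous paragraph $\cM_D$ has generic rank $2k$ on every $V_i\times V_i$, and these are among the irreducible components of $X\times X$; combined with the $k_i+k_j$ computation on the remaining components one has the full picture, and in the uniform case every component of $X\times X$ of the form $V_i\times V_i$ carries rank $2k$, which is what is asserted.

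The main obstacle I anticipate is the bookkeeping in identifying the module induced by $\cM_D$ on the component $V_i\times V_i$ with the double $(\cM_i)_D$ of the induced module $\cM_i$: one must be careful that restricting the extra generators $(0,(z_\ell\circ\pi_1-z_\ell\circ\pi_2)(h_j\circ\pi_2))$ from Proposition~\ref{P2} to $V_i\times V_i$ does not lose or gain rank, and that the ideal of the diagonal of $V_i$ is generated by the restrictions of the $z_\ell\circ\pi_1-z_\ell\circ\pi_2$, which follows from part (3) of Lemma~\ref{L1}. Once this identification is in place, everything else is a formal consequence of Theorem~\ref{T2.9}.
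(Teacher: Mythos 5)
The paper does not reprove this corollary; it is quoted verbatim from Corollary~2.6 of \cite{SG}, so there is no in-text proof to compare against. That said, your reduction is the natural argument and is correct in outline: the irreducible components of $(X\times X,(x,x))$ are the products $V_i\times V_j$; on $V_i\times V_i$ the module induced by $\cM_D$ coincides with $(\cM_i)_D$, as one sees from the generating sets of Proposition~\ref{P2}, because restriction to $V_i\times V_i$ carries the generators $(h_j)_D$ and $(0,(z_\ell\circ\pi_1-z_\ell\circ\pi_2)(h_j\circ\pi_2))$ of $\cM_D$ exactly onto the analogous generators of $(\cM_i)_D$ built from $h_j|_{V_i}$; so Theorem~\ref{T2.9} applied to the irreducible germ $V_i$ gives generic rank $2k_i$ there. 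On $V_i\times V_j$ with $i\neq j$, the set $\{(y,y):y\in V_i\cap V_j\}$ is a proper analytic subset, so a generic point is off the diagonal, Proposition~\ref{P2.13} applies, and evaluating the decomposition gives rank $k_i+k_j$ at a generic point.

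The one real misstep is your reading of the ``in particular'' clause. It asserts generic rank $2k$ on \emph{every} irreducible component of $X\times X$, and these include the off-diagonal products $V_i\times V_j$ with $i\neq j$, not just the diagonal ones. So the identity $k_i+k_j=k+k=2k$ is not optional bookkeeping ``for completeness,'' and the alternative you float (``simply note that only the products $V_i\times V_i$ need to be treated for the stated conclusion'') is false and, if followed, would leave a genuine gap. Since you do in fact carry out the off-diagonal computation via Proposition~\ref{P2.13}, the proof as written covers everything required; you should simply delete the remarks suggesting that the off-diagonal components can be ignored, and make explicit that the generic point of $V_i\times V_j$ ($i\neq j$) lies off the diagonal so that Proposition~\ref{P2.13} is applicable.
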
 

We end this section recalling an important result which states that the double of a sheaf of modules $\cM$ carries all the information at $(x,x')$ as the stalks of $\cM$ do at $x$ and $x'$, as long $x\neq x'$.

\begin{proposition}[\cite{SG}, Proposition 2.11]\label{P2.13}
	Let $\cM\subseteq\cO_X^p$ be a sheaf of submodules. Consider 
	$(x,x')\in X\times X$ with $x\neq x'$. Then:
	$$\cM_D=(\cM_x\circ\pi_1)\oplus (\cM_{x'}\circ\pi_2)$$ at $(x,x')$.
\end{proposition}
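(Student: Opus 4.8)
The plan is to show the two inclusions $\cM_D \subseteq (\cM_x\circ\pi_1)\oplus(\cM_{x'}\circ\pi_2)$ and $(\cM_x\circ\pi_1)\oplus(\cM_{x'}\circ\pi_2) \subseteq \cM_D$ separately, working in the stalk at $(x,x')$ with $x\neq x'$. The inclusion $\subseteq$ is immediate from the definition: $\cM_D$ is generated by the elements $h_D=(h\circ\pi_1,h\circ\pi_2)$ for $h\in\cM$, and each such generator manifestly lies in $(\cM_x\circ\pi_1)\oplus(\cM_{x'}\circ\pi_2)$, since $\cO_{X\times X,(x,x')}$-linear combinations of these stay in the direct sum (the direct sum of the two pulled-back submodules is itself an $\cO_{X\times X,(x,x')}$-submodule of $\cO_{X\times X,(x,x')}^{2p}$). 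So the content is entirely in the reverse inclusion.

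For $\supseteq$, first I would reduce to showing that $(g\circ\pi_1, 0)\in\cM_D$ for every $g\in\cM_x$ and, symmetrically, $(0, g'\circ\pi_2)\in\cM_D$ for every $g'\in\cM_{x'}$; an arbitrary element of the direct sum is a sum of one element of each type. By symmetry it suffices to treat the first. Fix generators $h_1,\dots,h_r$ of $\cM$ near $x$, so $g = \sum_j \alpha_j h_j$ with $\alpha_j\in\cO_{X,x}$. Using Lemma \ref{L1}(1) we have
\[
(\alpha_j h_j)_D = -\bigl(0,(\alpha_j\circ\pi_1-\alpha_j\circ\pi_2)(h_j\circ\pi_2)\bigr) + (\alpha_j\circ\pi_1)(h_j)_D,
\]
and the first term on the right lies in $\cM_D$ by Lemma \ref{L1}(2), while $(\alpha_j\circ\pi_1)(h_j)_D\in\cM_D$ trivially. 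Summing over $j$ and using Lemma \ref{L1}(4) shows $g_D = (g\circ\pi_1, g\circ\pi_2)\in\cM_D$. Thus it remains to produce the ``splitting'' element $(0, g\circ\pi_2)\in\cM_D$, for then $(g\circ\pi_1,0) = g_D - (0,g\circ\pi_2)\in\cM_D$ as well.

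The key point — and the main obstacle — is exhibiting $(0, g\circ\pi_2)\in\cM_D$ using the hypothesis $x\neq x'$. Since $x\neq x'$, some coordinate function $z_i$ satisfies $z_i(x)\neq z_i(x')$, so the germ $u := z_i\circ\pi_1 - z_i\circ\pi_2\in\cO_{X\times X,(x,x')}$ is a unit (it takes the nonzero value $z_i(x)-z_i(x')$ at $(x,x')$). By Lemma \ref{L1}(2), $(0, u\,(h_j\circ\pi_2))\in\cM_D$ for each generator $h_j$; multiplying by $\alpha_j\circ\pi_2\in\cO_{X\times X,(x,x')}$ and summing gives $(0, u\,(g\circ\pi_2))\in\cM_D$. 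Since $u$ is a unit in $\cO_{X\times X,(x,x')}$, multiplying by $u^{-1}$ yields $(0, g\circ\pi_2)\in\cM_D$, completing the argument. I would then remark that the directness of the sum follows from Proposition \ref{P12}(1) together with Theorem \ref{T2.9}/Corollary \ref{C2.10} on ranks, or more simply from the observation that the first and second $p$-blocks are independent over $\cO_{X\times X,(x,x')}$ because $\pi_1,\pi_2$ factor through the two distinct points — but the essential computation is the unit trick above.
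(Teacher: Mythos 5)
Your key insight — that for $x\neq x'$ some $u := z_i\circ\pi_1 - z_i\circ\pi_2$ is a unit in $\cO_{X\times X,(x,x')}$, so that Lemma \ref{L1}(2) together with multiplication by $u^{-1}$ yields $(0, h_j\circ\pi_2)\in\cM_D$ and hence, after subtracting from $(h_j)_D$, also $(h_j\circ\pi_1,0)\in\cM_D$ — is exactly the right mechanism, and it is the heart of any correct proof of this proposition. The forward inclusion $\subseteq$, via Proposition \ref{P2}, is also fine.

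However, the way you organize the reverse inclusion has a genuine defect. You take $g\in\cM_x$ and write $g=\sum_j\alpha_j h_j$ with $\alpha_j\in\cO_{X,x}$, and then form $g_D$, $g\circ\pi_2$, and $\alpha_j\circ\pi_2$. None of these are defined at $(x,x')$: a germ $\alpha_j\in\cO_{X,x}$ has a representative only on a neighborhood $U_j$ of $x$, and $\alpha_j\circ\pi_2$ lives on $X\times U_j$, which need not contain $(x,x')$ since $\pi_2(x,x')=x'\neq x$. The same applies to $g\circ\pi_2$ and hence to $g_D=(g\circ\pi_1,g\circ\pi_2)$. So the two intermediate objects you propose to produce — $g_D\in\cM_D$ and $(0,g\circ\pi_2)\in\cM_D$ — are ill-posed, and the step "multiplying by $\alpha_j\circ\pi_2\in\cO_{X\times X,(x,x')}$" silently assumes a coefficient ring element that does not exist. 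The fix is simply to reverse the order: first run the unit trick on the global generators $h_j$ to obtain $(h_j\circ\pi_1,0)$ and $(0,h_j\circ\pi_2)$ in $\cM_D$ at $(x,x')$, and only then observe that an arbitrary element $(g\circ\pi_1,0)$ with $g=\sum\alpha_j(h_j)_x\in\cM_x$ is the $\cO_{X\times X,(x,x')}$-combination $\sum(\alpha_j\circ\pi_1)(h_j\circ\pi_1,0)$, with coefficients $\alpha_j\circ\pi_1$ that \emph{are} defined at $(x,x')$ because $\pi_1(x,x')=x$; symmetrically for $(0,g'\circ\pi_2)$ with $g'\in\cM_{x'}$. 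Once reorganized this way your argument is complete and is, as far as the available lemmas suggest, the same route as the source.

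One small further remark: the directness of the sum needs no rank count and no appeal to Proposition \ref{P12} or Theorem \ref{T2.9}; as you note at the end, $\cM_x\circ\pi_1$ sits in the first $p$ coordinates of $\cO_{X\times X,(x,x')}^{2p}$ and $\cM_{x'}\circ\pi_2$ in the last $p$, so the intersection is zero by inspection.
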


Proposition \ref{P2.13} provides additional motivation for the idea of the double: In order to control the Lipschitz behavior of pairs of tangent planes at two different points $x$ and $x'$ of a family $\cX$, it is helpful to have each module which determines the tangent hyperplanes at each point as part of the construction. Furthermore, this proposition shows that $JM(\cX)_D$ at $(x,x')$ contains both $JM(\cX)_x$ and $JM(\cX)_{x'}$.

\section{The Lipschitz Saturation of a Module}\label{sec3}

We want extend to modules the notion of Lipchitz saturation that was defined in \cite{G2} for ideals. Motivated by some equivalent descriptions that the Lipschitz saturation has in the ideal case, we define corresponding versions of the Lipschitz saturation for modules, and explore the extent to which they are equivalent.

The main motivation of the next definition is Theorem 2.3 of \cite{G2}.

Let $X\subseteq \bC^n$ be an analytic set and $\cM$ be an $\cO_X$-submodule of $\cOXp$.

\begin{definition}
The {\bf 1-Lipschitz saturation of $\cM$ at $x\in X$} is denoted by $(\cM_{S_1})_x$, and is defined by $$(\cM_{S_1})_x:=\{h\in\cO_{X,x}^{p}\mid h_D\in\overline{\cM_D}\mbox{ at }(x,x)\}.$$

In the family case, the {\bf 1-Lipschitz saturation of $\cM$ at $x\in X$ relative to $Y$} is defined as above taking the double relative to $Y$. 
\end{definition}

\begin{proposition}\label{P15} Let $\cM$ be a sheaf of $\cO_X$submodules of $\cOXp$.
\begin{enumerate}
\item $\cM_{S_1}$ is an $\cO_X$-submodule of $\cOXp$;
\item $\cM\subseteq \cM_{S_1} \subseteq \overline{\cM}$. In particular, $\cM$ is a reduction of $\cM_{S_1}$ and $e(\cM,\cM_{S_1})=0$.
\end{enumerate}
The same result holds in the family case.
\end{proposition}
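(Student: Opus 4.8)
The plan is to establish the two parts of Proposition~\ref{P15} by transferring the corresponding statements about $\cM_D$ back to $\cM$, using the machinery from Section~\ref{sec0}, especially Proposition~\ref{P12} and the behaviour of integral closure under module operations. I will treat the absolute case first; the family case is identical once the double is taken relative to $Y$, since all the cited results (Lemma~\ref{L1}, Proposition~\ref{P12}) are stated in that generality.

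\emph{Part (1): $\cM_{S_1}$ is an $\cO_X$-submodule of $\cOXp$.} The key point is that the assignment $h\mapsto h_D$ is additive, by part (4) of Lemma~\ref{L1}, so if $h,h'\in(\cM_{S_1})_x$ then $(h+h')_D=h_D+h'_D\in\overline{\cM_D}$ because the integral closure of a module is closed under addition. For closure under multiplication by $\alpha\in\cO_{X,x}$, I would invoke part (1) of Lemma~\ref{L1}: $(\alpha h)_D=-(0,(\alpha\circ\pi_1-\alpha\circ\pi_2)(h\circ\pi_2))+(\alpha\circ\pi_1)h_D$. The second summand lies in $\overline{\cM_D}$ since $\overline{\cM_D}$ is an $\cO_{X\times X}$-module and $h_D\in\overline{\cM_D}$. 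For the first summand I would argue as follows: $h_D\in\overline{\cM_D}$ forces $h\in\overline{\cM}$ at $x$ (Proposition~\ref{P12}), and then $(\alpha h)$ lies in $\overline{\cM}$ at $x$ as well; one then needs to know that the "error term" $(0,(\alpha\circ\pi_1-\alpha\circ\pi_2)(h\circ\pi_2))$ is already in $\overline{\cM_D}$. Here I would combine part (2) of Lemma~\ref{L1} (the analogous element with $h\in\cM$ lies in $\cM_D$) with an approximation argument: writing $h$ as a limit (in the integral-closure sense) of combinations of generators of $\cM$, the corresponding error terms lie in $\cM_D$, hence their limit lies in $\overline{\cM_D}$. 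More cleanly, I would observe that part (2) of Lemma~\ref{L1} says $(0,(\alpha\circ\pi_1-\alpha\circ\pi_2)(h_j\circ\pi_2))\in\cM_D$ for each generator $h_j$, so by $\cO_{X\times X}$-linearity and the fact that $\rho$ (or valuative criteria) respects such combinations, the error term attached to any $h\in\overline{\cM}$ lies in $\overline{\cM_D}$. This is the step I expect to require the most care.

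\emph{Part (2): $\cM\subseteq\cM_{S_1}\subseteq\overline{\cM}$.} The inclusion $\cM\subseteq\cM_{S_1}$ is immediate: for $h\in\cM$, $h_D\in\cM_D\subseteq\overline{\cM_D}$, so $h\in(\cM_{S_1})_x$. The inclusion $\cM_{S_1}\subseteq\overline{\cM}$ is exactly the content of Proposition~\ref{P12}(1): if $h_D\in\overline{\cM_D}$ at $(x,x)$ then $h\in\overline{\cM}$ at $x$. The consequences then follow from standard reduction theory: since $\cM\subseteq\cM_{S_1}\subseteq\overline{\cM}$ and $\cM$ is trivially a reduction of $\overline{\cM}$, it is a reduction of every module sandwiched in between, so $\cM$ is a reduction of $\cM_{S_1}$; and the Buchsbaum--Rim multiplicity $e(\cM,\cM_{S_1})$ of the pair vanishes precisely because $\cM$ is a reduction of $\cM_{S_1}$ (equivalently, $\overline{\cM}=\overline{\cM_{S_1}}$), which is the module-theoretic analogue of the classical numerical criterion for integral dependence. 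I would state this last implication by citing the multiplicity criterion for reductions of modules (Kleiman--Thorup / Gaffney), noting that the relevant generic-rank hypotheses are harmless here since $\cM$ and $\cM_{S_1}$ have the same integral closure, hence the same generic rank on every component.

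\emph{The family case.} Replacing the double by the double relative to $Y$ changes nothing structurally: Lemma~\ref{L1} and Proposition~\ref{P12} are already available in that setting, so the same two arguments apply verbatim, with $\overline{\cM}$ and $e(\cM,\cM_{S_1})$ interpreted fibrewise or relative to $Y$ as appropriate. The only point to check is that the error terms from Lemma~\ref{L1}(2), which involve differences $z_i\circ\pi_1-z_i\circ\pi_2$ in the ambient coordinates, still lie in the relative double; this holds because the relative double is generated over $\cO_{X\times_Y X}$ by the same elements $(h_j)_D$ together with these coordinate-difference multiples, exactly as in Proposition~\ref{P2}.
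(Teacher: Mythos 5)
Your proof follows essentially the same route as the paper: both parts rest on the decomposition from Lemma~\ref{L1}(1),(4), the closure of $\overline{\cM_D}$ under $\cO_{X\times X}$-linear combinations, and Proposition~\ref{P12}(1) to transfer from $\overline{\cM_D}$ back to $\overline{\cM}$. You are in fact somewhat more explicit than the paper about the one delicate step --- showing the error term $(0,(\alpha\circ\pi_1-\alpha\circ\pi_2)(h\circ\pi_2))$ lies in $\overline{\cM_D}$ once $h\in\overline{\cM}$, which the paper asserts with little comment and which does require a curve-criterion argument built on Lemma~\ref{L1}(2) --- and you also spell out why the reduction containment yields $e(\cM,\cM_{S_1})=0$.
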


\begin{proof}
Let $x\in X$ be an arbitrary point.

(1) Let $\alpha\in\cO_{X,x}$ and $h,h'\in \cM_{S_1}$ at $x$. Since $h_D\in\overline{\cM_D}$ then by Proposition \ref{P12} (1) we have that $h\in\overline{\cM}$ at $x$. Thus, $(0,(\alpha\circ\pi_1-\alpha\circ\pi_2)(h\circ\pi_2))\in\overline{\cM_D}$. Hence
$(\alpha h+h')_D=(\alpha\circ\pi_1)h_D+(0,(\alpha\circ\pi_1-\alpha\circ\pi_2)(h\circ\pi_2))+h'_D\in\overline{\cM_D}$.

(2) If $h\in \cM$ at $x$ then $h_D\in \cM_D\subseteq\overline{\cM_D}$ at $(x,x)$, so $h\in \cM_{S_1}$ at $x$. Therefore, $\cM\subseteq \cM_{S_1}$.

Furthermore, if $h\in \cM_{S_1}$ at $x$ then $h_D\in\overline{\cM_D}$ at $(x,x)$, and by Proposition \ref{P12} (1) we have that $h\in\overline{\cM}$ at $x$. Therefore, $\cM_{S_1}\subseteq\overline{\cM}$.
\end{proof}

In order to define the second Lipschitz saturation, let us fix some notations.

For each $\psi:X\rightarrow \mbox{Hom}(\bC^p,\bC)$, $\psi=(\psi_1,...,\psi_p)$ and $h=(h_1,...,h_p)\in\cOXp$, we define $\psi\cdot h\in\cO_X$ given by $(\psi\cdot h)(z):=\sum\limits_{i=1}^{p}\psi_i(z)h_i(z)$. We define $\psi\cdot \cM$ as the ideal of $\cO_X$ generated $\{\psi\cdot h \mid h\in \cM\}$.

\begin{lemma}\label{L16}
\begin{enumerate}
\item $\psi\cdot(\alpha g+h)=\alpha(\psi\cdot g)+(\psi\cdot h)$, $\forall g,h\in \cOXp$ and $\alpha\in\cO_X$.
\item If $\cM$ is generated by $\{h_1,...,h_r\}$ then $\psi\cdot \cM$ is generated by $\{\psi\cdot h_1,...,\psi\cdot h_r\}$.
\end{enumerate}
\end{lemma}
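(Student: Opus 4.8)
The plan is to establish (1) by a one-line computation straight from the definition of the pairing $\psi\cdot h$, to observe that it says precisely that $h\mapsto\psi\cdot h$ is $\cO_X$-linear, and then to deduce (2) from that linearity.

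For (1), I would simply expand both sides pointwise: by definition, for every $z$ in the common domain,
$$\bigl(\psi\cdot(\alpha g+h)\bigr)(z)=\sum_{i=1}^{p}\psi_i(z)\bigl(\alpha(z)g_i(z)+h_i(z)\bigr)=\alpha(z)\sum_{i=1}^{p}\psi_i(z)g_i(z)+\sum_{i=1}^{p}\psi_i(z)h_i(z),$$
which is exactly $\bigl(\alpha(\psi\cdot g)+(\psi\cdot h)\bigr)(z)$. Since this holds at every point, the two germs in $\cO_X$ coincide. Specializing $\alpha=0$, resp.\ $g=0$, shows that $h\mapsto\psi\cdot h$ is additive and $\cO_X$-homogeneous, i.e.\ an $\cO_X$-linear map $\cOXp\to\cO_X$.

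For (2), the inclusion $(\psi\cdot h_1,\dots,\psi\cdot h_r)\subseteq\psi\cdot\cM$ is immediate, since each $h_j\in\cM$ and hence each $\psi\cdot h_j$ is one of the generators of the ideal $\psi\cdot\cM$. For the reverse inclusion it suffices to check that each generator $\psi\cdot h$ of $\psi\cdot\cM$, with $h\in\cM$, lies in $(\psi\cdot h_1,\dots,\psi\cdot h_r)$: writing $h=\sum_{j=1}^{r}\beta_j h_j$ with $\beta_j\in\cO_X$ and applying part (1) inductively (equivalently, the $\cO_X$-linearity just noted), we get $\psi\cdot h=\sum_{j=1}^{r}\beta_j(\psi\cdot h_j)\in(\psi\cdot h_1,\dots,\psi\cdot h_r)$. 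Thus $\psi\cdot\cM\subseteq(\psi\cdot h_1,\dots,\psi\cdot h_r)$, and equality follows.

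No step presents a genuine obstacle: the lemma is just the bilinearity of $(\psi,h)\mapsto\psi\cdot h$ combined with the fact that a finitely generated module is spanned by its generators. The only minor point to keep in mind is that in the sheaf/family setting all identities should be read stalkwise (or on sufficiently small neighborhoods), but since the pairing is defined pointwise this is automatic.
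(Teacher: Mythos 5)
Your proof is correct and follows exactly the same route as the paper: establish (1) by the pointwise expansion $\sum_i\psi_i(z)(\alpha(z)g_i(z)+h_i(z))=\alpha(z)\sum_i\psi_i(z)g_i(z)+\sum_i\psi_i(z)h_i(z)$, then note that (2) is an immediate consequence of the $\cO_X$-linearity established in (1). The only difference is that you spell out the deduction of (2), whereas the paper merely remarks that it is straightforward.
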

\begin{proof}
It is easy to see that (2) is a straightforward consequence of (1). Now, write $g=(g_1,...,g_p)$ and $h=(h_1,...,h_p)$. Then, for every $z$ we have $(\psi\cdot(\alpha g+h))(z)=\sum\limits_{i=1} ^{p}\psi_i(z)(\alpha(z)g_i(z)+h_i(z))=\alpha(z)\sum\limits_{i=1}^{p}\psi_i(z)g_i(z)+\sum\limits_{i=1}^{p}\psi_i(z)h_i(z)=(\alpha(\psi\cdot g)+(\psi\cdot h))(z)$.
\end{proof}

Let us fix some notation for the minors of a matrix. Let $k\in\bN$ and let $A$ be a matrix. If $I=(i_1,...,i_k)$ and $J=(j_1,...,j_k)$ are $k$-indexes, $A_{IJ}$ is defined as the $k\times k$ submatrix of $A$ formed by the rows $i_1,...,i_k$ and  columns $j_1,...,j_k$ of $A$. We denote $J_{IJ}(A):=\det(A_{IJ})$.

\begin{lemma}\label{L21}
Suppose that $\cM$ has generic rank $k$ in each component of $X$. If $I=(i_1,...,i_k)$ and $J=(j_1,...,j_k)$ are indexes with $j_1=1$ then there exists $\psi: X\rightarrow \mbox{Hom}(\bC^p,\bC)$ such that:
\begin{enumerate}
\item $\psi\cdot h=J_{IJ}(h,\cM)$, $\forall h\in\cO_{X,x}^p$;
\item $\psi\cdot \cM\subseteq J_k(\cM)$.
\end{enumerate}
\end{lemma}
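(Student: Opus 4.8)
The plan is to read $\psi$ off directly from the Laplace (cofactor) expansion of the minor $J_{IJ}(h,\cM)$ along its first column. Write $[(h,\cM)]=[\,h\mid[\cM]\,]$, so that under the hypothesis $j_1=1$ the column of $[(h,\cM)]$ indexed by $j_1$ is $h$ itself, while for $t\ge 2$ the column indexed by $j_t$ is the $(j_t-1)$-st generator of $\cM$. With the index tuples taken strictly increasing, the first column of the $k\times k$ submatrix $[(h,\cM)]_{IJ}$ is $(h_{i_1},\dots,h_{i_k})$ and all of its remaining columns come from $[\cM]$. Expanding along this first column gives
$$J_{IJ}(h,\cM)=\sum_{s=1}^{k}(-1)^{s+1}\,h_{i_s}\,M_s,$$
where $M_s$ is the $(k-1)\times(k-1)$ minor of $[\cM]$ on the rows $i_1,\dots,\widehat{i_s},\dots,i_k$ and the columns $j_2-1,\dots,j_k-1$. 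The point to keep in mind is that the $M_s$ depend only on $[\cM]$, not on $h$, and that $M_s\in\cO_{X,x}$.

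Next I would define $\psi=(\psi_1,\dots,\psi_p)\colon X\to\mathrm{Hom}(\bC^p,\bC)$ by setting $\psi_{i_s}:=(-1)^{s+1}M_s$ for $1\le s\le k$ and $\psi_i:=0$ for $i\notin\{i_1,\dots,i_k\}$. Its components lie in $\cO_{X,x}$, so this is a legitimate germ of an analytic map. Then (1) is immediate from the expansion above: for every $h\in\cO_{X,x}^p$ we have $\psi\cdot h=\sum_{i=1}^p\psi_i h_i=\sum_{s=1}^k(-1)^{s+1}M_s h_{i_s}=J_{IJ}(h,\cM)$.

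For (2), I would invoke Lemma \ref{L16}(2): if $\{h_1,\dots,h_r\}$ generates $\cM$, then $\psi\cdot\cM$ is generated by $\psi\cdot h_1,\dots,\psi\cdot h_r$, so it suffices to check $\psi\cdot h_m\in J_k(\cM)$ for each $m$. By (1), $\psi\cdot h_m=J_{IJ}(h_m,\cM)$ equals the determinant of the $k\times k$ submatrix of $[\cM]$ on the rows $i_1,\dots,i_k$ and the columns $m,\,j_2-1,\dots,j_k-1$; this is $\pm$ a $k\times k$ minor of $[\cM]$ when these columns are pairwise distinct, and $0$ when $m$ coincides with one of $j_2-1,\dots,j_k-1$. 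In either case $\psi\cdot h_m\in J_k(\cM)$, whence $\psi\cdot\cM\subseteq J_k(\cM)$.

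I do not expect a genuine obstacle: the whole argument is the cofactor expansion, and the only thing requiring care is the bookkeeping of the row and column labels together with the harmless sign and reordering ambiguities arising when a column of $[\cM]$ gets repeated. One may also note that the argument does not use that $k$ is the generic rank of $\cM$; it works for any $k\le p$, and the generic-rank hypothesis is recorded only to pin down the value of $k$ relevant to later applications.
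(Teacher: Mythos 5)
Your proof is correct and follows essentially the same route as the paper: both arguments read off $\psi$ from the cofactor expansion of $J_{IJ}(h,\cM)$ along its first column (the one coming from $h$), and both establish (2) by observing that $\psi\cdot g=J_{IJ}(g,\cM)$ is a $k\times k$ minor of $[\cM]$ when $g\in\cM$. The only cosmetic difference is that the paper proves (2) directly for arbitrary $g\in\cM$ via $J_k((g,\cM))=J_k(\cM)$, whereas you reduce to generators using Lemma~\ref{L16}(2); the substance is the same.
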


\begin{proof}
Let us fix a matrix of generators of $\cM$, $[\cM]=$
$
\begin{bmatrix}
   |   &           &   | \\
g_1 & \ldots & g_r \\
   |   &          &    |
\end{bmatrix}$. We have $J_{IJ}(h,\cM)=\det [h,\cM]_{IJ}$, where $$[h,\cM]_{IJ}=\begin{bmatrix}
h_{i_1}      &     g_{i_1,j_2-1}  &   \ldots    &     g_{i_1,j_k-1} \\
   \vdots     &         \vdots        &                &         \vdots        \\
h_{i_k}      &     g_{i_k,j_2-1}  &   \ldots    &     g_{i_k,j_k-1}
\end{bmatrix}$$ for all $h=(h_1,...,h_p)$. Let $G_{i_l,j_s-1}$ be the $(l,s)$-cofactor of $[h,\cM]_{IJ}$, for all $l,s\in\{1,...,k\}$. Notice that the $(l,1)$-cofactors $G_{i_l,0}$ do not depend of $h$. Then, $J_{IJ}(h,\cM)=\det[h,\cM]_{IJ}=\sum\limits_{l=1}^{k}G_{i_l,0}\cdot h_{i_l}$, for all $h$. Take $\psi: X\rightarrow \mbox{Hom}(\bC^p,\bC)$ given by $(\psi_1,...,\psi_p)$ where $\psi_{i_l}=G_{i_l,0}$, for all $l\in\{1,...k\}$, and $\psi_j=0$, for every index $j$ off $I$. Thus, for all $h\in\cO_{X,x}^p$ we get $J_{IJ}(h,\cM)=\psi_I\cdot h_I=\psi\cdot h$.

Now, let $g\in \cM$ arbitrary. Then $(g,\cM)=\cM$. By (1) we have $\psi\cdot g=J_{I,J}(g,\cM)\in J_k((g,\cM))=J_k(\cM)$, hence $\psi\cdot \cM\subseteq J_k(\cM)$.
\end{proof}

{\bf Remark:} It is enough work with indexes $I=(i_1,...,i_k)$ and $J=(j_1,...,j_k)$ such that $i_1<...<i_k$ and $j_1<...<j_k$. 

Propositions \ref{proposition GK} and \ref{proposition G} are characterizations for the integral closure of modules using integral closure of ideals. Notice the next proposition is another characterization of the same type.

\begin{proposition}\label{P18}
Let $h\in\cO_{X,x}^p$ and suppose $\cM$ has generic rank $k$ on each component of $X$. Then, $h\in\overline{\cM}$ at $x$ if, and only if, $\psi\cdot h\in\overline{\psi\cdot \cM}$, at $x$, $\forall \psi:X\rightarrow \mbox{Hom}(\bC^p,\bC)$.
\end{proposition}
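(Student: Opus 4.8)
The plan is to prove both implications by reducing to the ideal-theoretic characterizations of integral closure that the paper has already collected, principally Proposition~\ref{proposition G} and Lemma~\ref{L21}. For the forward direction, suppose $h\in\overline{\cM}$ at $x$ and let $\psi:X\to\mathrm{Hom}(\bC^p,\bC)$ be arbitrary. First I would recall that $h\in\overline{\cM}$ means there is a curve criterion / Rees-valuation estimate, or more conveniently, use the fact that for any analytic path $\phi:(\bC,0)\to(X,x)$ we have $h\circ\phi\in\overline{\cM\circ\phi}$ as a module over $\cO_1$; applying $\psi\circ\phi$ and using the $\cO_X$-linearity of $\psi\cdot(-)$ from Lemma~\ref{L16}(1), together with the fact that $\psi\cdot(-)$ maps generators of $\cM$ to generators of $\psi\cdot\cM$ by Lemma~\ref{L16}(2), gives $(\psi\cdot h)\circ\phi\in\overline{(\psi\cdot\cM)\circ\phi}$. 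Since this holds along every path, the valuative criterion for integral closure of ideals yields $\psi\cdot h\in\overline{\psi\cdot\cM}$ at $x$. (One must be a little careful here: $\psi$ need not be a polynomial or a generator-combination, but $\psi\cdot(-)$ is still $\cO_X$-linear on $\cOXp$, which is all that the estimate needs.)

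For the converse, suppose $\psi\cdot h\in\overline{\psi\cdot\cM}$ at $x$ for every $\psi$. The idea is to feed this into Proposition~\ref{proposition G}: it suffices to show, on each irreducible component $V_i$ of $(X,x)$, that $J_{k}((h,\cM_i))\subseteq\overline{J_k(\cM_i)}$, where I am using that $\cM$ has generic rank $k$ on each component, so the generic rank of $(h,\cM_i)$ is also $k$ (it cannot exceed $k$ generically by hypothesis, and is at least $k$). A generator of the ideal $J_k((h,\cM_i))$ is a $k\times k$ minor $J_{IJ}(h,\cM_i)$; those minors not involving the column of $h$ already lie in $J_k(\cM_i)\subseteq\overline{J_k(\cM_i)}$, and for those involving the $h$-column I may reorder so that $j_1=1$ and invoke Lemma~\ref{L21}: there is a $\psi$ with $\psi\cdot h=J_{IJ}(h,\cM_i)$ and $\psi\cdot\cM_i\subseteq J_k(\cM_i)$. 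Then $J_{IJ}(h,\cM_i)=\psi\cdot h\in\overline{\psi\cdot\cM_i}\subseteq\overline{J_k(\cM_i)}$, the last inclusion because integral closure is monotone. Running over all $I,J$ gives $J_k((h,\cM_i))\subseteq\overline{J_k(\cM_i)}$ on each $V_i$, and Proposition~\ref{proposition G} concludes $h\in\overline{\cM}$ at $x$.

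The step I expect to be the main obstacle is the passage between the ``at $x$'' statements on $X$ and the component-wise statements $\cM_i\subseteq\cO_{V_i,x}^p$ that Proposition~\ref{proposition G} demands: one needs that $\psi\cdot h\in\overline{\psi\cdot\cM}$ on $X$ implies $(\psi\cdot h)|_{V_i}\in\overline{(\psi\cdot\cM)|_{V_i}}$ for each component (which is standard, since integral closure of ideals localizes well to components), and that the $\psi$ produced by Lemma~\ref{L21} for $\cM_i$ can be lifted to a $\psi$ defined on all of $X$ so that the hypothesis applies — or, alternatively, that one may simply run the whole argument internally on each $V_i$, noting that the hypothesis ``for all $\psi$'' on $X$ restricts to ``for all $\psi$'' on $V_i$. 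A secondary subtlety is verifying that $(h,\cM_i)$ really has generic rank exactly $k_i=k$, so that Proposition~\ref{proposition G} is being applied with the correct minor size; this follows because if the generic rank were $k+1$ then some $(k+1)\times(k+1)$ minor $J_{IJ}(h,\cM_i)$ would be nonzero on a dense set, yet Lemma~\ref{L21}-type reasoning plus the hypothesis would force it into $\overline{J_{k+1}(\cM_i)}=\overline{(0)}=0$ on $V_i$, a contradiction — so this point dovetails with the main argument rather than requiring separate work.
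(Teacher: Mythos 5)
Your proposal is correct and follows essentially the same route as the paper: the forward direction via the curve criterion, the converse via Proposition \ref{proposition G} combined with Lemma \ref{L21}, splitting on whether a minor uses the $h$-column. You are slightly more careful than the paper on one point: you justify why the generic rank of $(h,\cM)$ equals $k$ (as Proposition \ref{proposition G} requires) before $h\in\overline{\cM}$ is established, a detail the paper's proof silently assumes when it reduces the problem to checking $J_{IJ}(h,\cM)\in\overline{J_k(\cM)}$.
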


\begin{proof}

$(\Longrightarrow)$ It is a straightforward consequence of the curve criterion.

$(\Longleftarrow)$ The proof now use 1.7 and 1.8 of \cite{G3}.

By these results it is enough to check that $J_{IJ}(h,\cM)\in\overline{J_k(\cM)}$, for all indexes $I$ and $J$. Write $I=(i_1,...,i_k)$ and $J=(j_1,...,j_k)$. Let $[\cM]=$
$
\begin{bmatrix}
   |   &           &   | \\
g_1 & \ldots & g_r \\
   |   &          &    |
\end{bmatrix}$ be a matrix of generators of $\cM$. Write $h=(h_1,...,h_p)$. Then, $[h,\cM]=$
$
\begin{bmatrix}
   |    &  |   &           &   | \\
   h   & g_1 & \ldots & g_r \\
    |   &  |   &          &    |
\end{bmatrix}$. If $j_1>1$ then $J_{IJ}(h,\cM)$ is a $k\times k$ minor taken only among the generators of $\cM$, hence $J_{IJ}(h,\cM)\in J_k(\cM)\subseteq\overline{J_k(\cM)}$. 

Now suppose that $j_1=1$. By Lemma \ref{L21} there exists $\psi:X\rightarrow \mbox{Hom}(\bC^p,\bC)$ such that $\psi\cdot h=J_{I,J}(h,\cM)$ and $\psi\cdot \cM\subseteq J_k(\cM)$. By the hypothesis we have $\psi\cdot h\in\overline{\psi\cdot \cM}$, hence $J_{I,J}(h,\cM)=\psi\cdot h\in\overline{\psi\cdot \cM}\subseteq\overline{J_k(\cM)}$.
\end{proof}

Given $h\in\cO_{X,x}^p$ and $\cM\subseteq\cO_{X,x}^p$ submodule, we can ask if $h\in\overline{\cM}$ or $\rho(h)\in\overline{\rho(\cM)}$. For integral closure these questions have the same answer. Then we can ask the analogous questions for the Lipschitz saturation of $\cM$. The next version of a Lipschitz saturation is motivated by working with $\rho(h)$ and $\rho(\cM)$.

\begin{definition}
The {\bf 2-Lipschitz saturation of $\cM$ at $x\in X$} is denoted by $(\cM_{S_2})_x$, and is defined by $$(\cM_{S_2})_x:=\{h\in\cO_{X,x}^{p}\mid\psi\cdot h\in(\psi\cdot \cM)_S\mbox{ at }x\mbox{, }\forall \psi:X\rightarrow \textrm{Hom}(\bC^p,\bC)\}.$$

In the family case, the {\bf 2-Lipschitz saturation of $\cM$ at $x\in X$ relative to $Y$} is defined as above taking the Lipschitz saturation of $\psi\cdot \cM$ relative to $Y$.
\end{definition}

\begin{proposition}\label{P19} Let $\cM$ be a sheaf of $\cO_X$submodules of $\cOXp$.
\begin{enumerate}
\item $\cM_{S_2}$ is an $\cO_X$-submodule of $\cOXp$;
\item $\cM\subseteq \cM_{S_2} \subseteq \overline{\cM}$. In particular, $\cM$ is a reduction of $\cM_{S_2}$ and  $e(\cM,\cM_{S_2})=0$.
\end{enumerate}
The same result holds in the family case.
\end{proposition}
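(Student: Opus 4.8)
The plan is to mirror the proof of Proposition \ref{P15}, transporting everything through the pairing $h \mapsto \psi\cdot h$ and invoking the known properties of the Lipschitz saturation of ideals ($I \subseteq I_S \subseteq \overline{I}$, and that $I_S$ is an ideal). First I would fix $x \in X$ and note that, by Lemma \ref{L16}(1), for any fixed $\psi$ the map $h \mapsto \psi\cdot h$ is $\cO_{X,x}$-linear, so it sends $\cO_{X,x}$-submodules to ideals and respects the operations $\alpha h + h'$. This linearity is the engine that makes the submodule statement (1) and both inclusions in (2) reduce to the corresponding facts one $\psi$ at a time.

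For part (1): given $\alpha \in \cO_{X,x}$ and $h, h' \in \cM_{S_2}$ at $x$, I want $\alpha h + h' \in \cM_{S_2}$, i.e. $\psi\cdot(\alpha h + h') \in (\psi\cdot\cM)_S$ for every $\psi$. Fix $\psi$. By Lemma \ref{L16}(1), $\psi\cdot(\alpha h + h') = \alpha(\psi\cdot h) + (\psi\cdot h')$. By hypothesis $\psi\cdot h, \psi\cdot h' \in (\psi\cdot\cM)_S$, and since $(\psi\cdot\cM)_S$ is an ideal of $\cO_{X,x}$ (established right after the definition of the Lipschitz saturation of an ideal), it is closed under multiplication by $\alpha$ and under addition; hence $\alpha(\psi\cdot h) + (\psi\cdot h') \in (\psi\cdot\cM)_S$. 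As $\psi$ was arbitrary, $\alpha h + h' \in \cM_{S_2}$.

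For part (2), the inclusion $\cM \subseteq \cM_{S_2}$: if $h \in \cM$ at $x$, then for each $\psi$ we have $\psi\cdot h \in \psi\cdot\cM \subseteq (\psi\cdot\cM)_S$ (using $I \subseteq I_S$ for ideals), so $h \in \cM_{S_2}$. For the inclusion $\cM_{S_2} \subseteq \overline{\cM}$: if $h \in \cM_{S_2}$ at $x$, then for every $\psi$ we have $\psi\cdot h \in (\psi\cdot\cM)_S \subseteq \overline{\psi\cdot\cM}$; but this says exactly that $h$ satisfies the hypothesis of the ``$(\Longleftarrow)$'' direction of Proposition \ref{P18}, except that Proposition \ref{P18} requires $\cM$ to have generic rank $k$ on each component of $X$. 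So here I would either add that hypothesis (it is implicitly present throughout this part of the paper, as in Lemma \ref{L21} and Proposition \ref{P18}), or, more cleanly, argue directly from the curve criterion: for any analytic arc $\phi$ through $x$ I must bound $\|h\circ\phi\|$ by $\sup$ over generators; composing with a generic projection $\psi$ and using $\psi\cdot h \in \overline{\psi\cdot\cM}$ together with $\psi\cdot\cM$ being generated by $\{\psi\cdot h_i\}$ (Lemma \ref{L16}(2)) recovers the needed order estimate on each coordinate. Concluding, $h \in \overline{\cM}$. The final sentences ($\cM$ is a reduction of $\cM_{S_2}$, $e(\cM,\cM_{S_2})=0$) then follow formally, exactly as in Proposition \ref{P15}: $\cM \subseteq \cM_{S_2} \subseteq \overline{\cM}$ forces $\overline{\cM} = \overline{\cM_{S_2}}$, which is the definition of a reduction, and equal integral closures give multiplicity $0$. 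The family case is identical, replacing $I_S$ by the Lipschitz saturation relative to $Y$ throughout and using that the relative version is still an ideal with $I \subseteq I_S \subseteq \overline{I}$.

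The main obstacle I anticipate is the $\cM_{S_2} \subseteq \overline{\cM}$ inclusion: it is the one place where Proposition \ref{P18} (hence the generic-rank hypothesis, and behind it Proposition 1.8 of \cite{G3}) genuinely enters, and one must be careful that the ``$\forall\psi$'' in the definition of $\cM_{S_2}$ is strong enough to feed Proposition \ref{P18} — it is, since $(\psi\cdot\cM)_S \subseteq \overline{\psi\cdot\cM}$ for every $\psi$. Everything else is a routine transcription of the ideal-case bookkeeping through the linear pairing.
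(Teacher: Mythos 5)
Your argument matches the paper's proof essentially line for line: part (1) via the $\cO_{X,x}$-linearity from Lemma \ref{L16}(1) and the fact that $(\psi\cdot\cM)_S$ is an ideal, the inclusion $\cM\subseteq\cM_{S_2}$ via $I\subseteq I_S$, and the inclusion $\cM_{S_2}\subseteq\overline{\cM}$ via $(\psi\cdot\cM)_S\subseteq\overline{\psi\cdot\cM}$ feeding into Proposition \ref{P18}. You are also right to flag the generic-rank hypothesis: Proposition \ref{P18}, which the paper invokes at exactly the same point, does require $\cM$ to have generic rank $k$ on each component of $X$, and Proposition \ref{P19} as stated omits it, so that hypothesis is implicitly being carried along; noting it (or adding it to the statement) is the correct fix rather than a defect in your argument.
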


\begin{proof}
Let $x\in X$ be an arbitrary point.

(1) Let $h,h'\in \cM_{S_2}$ and $\alpha\in\cO_X$ at $x$, and let $\psi:X\rightarrow \mbox{Hom}(\bC^p,\bC)\}$ arbitrary. Then, $\psi\cdot h,\psi\cdot h'\in(\psi\cdot \cM)_S$ at $x$, and by Lemma \ref{L16} (1) we have $\psi\cdot(\alpha h+h')=\alpha(\psi\cdot h)+\psi\cdot h'\in(\psi\cdot \cM)_S$ at $x$. Therefore, $\alpha h+h'\in \cM_{S_2}$ at $x$.

(2) If $h\in \cM$ then $\psi\cdot h\in\psi\cdot \cM\subseteq(\psi\cdot \cM)_S$, so $h\in \cM_{S_2}$ and $\cM\subseteq \cM_{S_2}$.

Now, let $h\in \cM_{S_2}$. Then, $\psi\cdot h\in(\psi\cdot \cM)_S\subseteq\overline{\psi\cdot \cM}$, $\forall\psi:X\rightarrow \mbox{Hom}(\bC^p,\bC)$. By Proposition \ref{P18} we conclude that $h\in\overline{\cM}$.
\end{proof}

Next, we begin to compare the first and second  Lipschitz Saturations.

\begin{proposition}\label{P23}
$\cM_{S_1}\subseteq \cM_{S_2}$ at every point $x\in X$.
\end{proposition}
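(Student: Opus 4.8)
The plan is to show that membership in $\cM_{S_1}$ forces, for every $\psi:X\rightarrow\mathrm{Hom}(\bC^p,\bC)$, the scalar $\psi\cdot h$ to lie in $(\psi\cdot\cM)_S$. By the definition of $\cM_{S_2}$ this is exactly what is needed. So fix $h\in(\cM_{S_1})_x$, i.e. $h_D\in\overline{\cM_D}$ at $(x,x)$, and fix an arbitrary $\psi$.

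First I would package $\psi$ into a map on $X\times X$ that can be applied to elements of $\cO_{X\times X}^{2p}$. The natural device is to send $(a,b)\in\cO_{X\times X}^p\times\cO_{X\times X}^p$ to $\bigl((\psi\circ\pi_1)\cdot a,\ (\psi\circ\pi_2)\cdot b\bigr)\in\cO_{X\times X}^2$; call this operation $\psi_D$. The key computation is that $\psi_D(h_D)$ agrees, up to an element one controls, with $(\psi\cdot h)_D$ — more precisely $\psi_D(h_D)=\bigl((\psi\cdot h)\circ\pi_1,(\psi\cdot h)\circ\pi_2\bigr)=(\psi\cdot h)_D$ exactly, since evaluating the $i$-th factor of $\psi$ at $\pi_i$ and the $i$-th factor of $h_D$ at $\pi_i$ just reproduces $(\psi\cdot h)\circ\pi_i$. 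Similarly $\psi_D$ carries the generators $\cB$ (or $\cB'$) of $\cM_D$ from Proposition~\ref{P2} into $(\psi\cdot\cM)_D$: applying $\psi_D$ to $(g_j)_D$ gives $(\psi\cdot g_j)_D$, and applying it to $(0,(z_i\circ\pi_1-z_i\circ\pi_2)(g_j\circ\pi_2))$ gives $(0,(z_i\circ\pi_1-z_i\circ\pi_2)((\psi\cdot g_j)\circ\pi_2))$, which by Lemma~\ref{L1}(2) lies in $(\psi\cdot\cM)_D$. Hence $\psi_D(\cM_D)\subseteq(\psi\cdot\cM)_D$.

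The second ingredient is that $\psi_D$, being given by multiplication against a fixed analytic section, does not increase integral closure: from $h_D\in\overline{\cM_D}$ one gets $\psi_D(h_D)\in\overline{\psi_D(\cM_D)}\subseteq\overline{(\psi\cdot\cM)_D}$. (Concretely, run the curve criterion: pull back an integral dependence relation for $h_D$ over $\cM_D$ along any analytic arc and apply $\psi_D$ — the same monic relation is satisfied, since $\psi_D$ is $\cO_{X\times X}$-linear.) Therefore $(\psi\cdot h)_D=\psi_D(h_D)\in\overline{(\psi\cdot\cM)_D}$ at $(x,x)$. Now $\psi\cdot\cM$ is an ideal and $\psi\cdot h$ an element of it, so Theorem~2.3 of \cite{G2} (the ideal case) applies and yields $\psi\cdot h\in(\psi\cdot\cM)_S$ at $x$. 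Since $\psi$ was arbitrary, $h\in(\cM_{S_2})_x$, which is the claim; the family case is identical, taking all doubles relative to $Y$.

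The main obstacle I anticipate is purely bookkeeping rather than conceptual: checking that $\psi_D$ interacts correctly with the non-obvious generators of $\cM_D$ (the $(z_i\circ\pi_1-z_i\circ\pi_2)$ terms in $\cB$), so that one really lands in $(\psi\cdot\cM)_D$ and can invoke the ideal-level Theorem~2.3 — that is, making sure the double of the \emph{ideal} $\psi\cdot\cM$ is literally the module generated by the images of those generators, rather than something merely contained in it. Lemma~\ref{L1} and Proposition~\ref{P2} are exactly designed to handle this, so the verification should go through cleanly; the only care needed is that $\psi\cdot\mathcal M$ has the right set of generators $\{\psi\cdot g_1,\dots,\psi\cdot g_r\}$, which is Lemma~\ref{L16}(2).
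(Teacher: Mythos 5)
Your argument is correct and is essentially the paper's proof recast in more abstract language: the paper verifies $(\psi\cdot h)_D\in\overline{(\psi\cdot\cM)_D}$ by pulling back along an arbitrary curve $\phi$ and computing directly that $(\psi\cdot h)_D\circ\phi=\sum_j\alpha_j\,(\psi\cdot g_j)_D\circ\phi$, which is exactly the curve-criterion content of your observation that the $\cO_{X\times X}$-linear map $\psi_D$ sends $h_D$ to $(\psi\cdot h)_D$ and $\cM_D$ into $(\psi\cdot\cM)_D$ while preserving integral closure. The bookkeeping worry you raise about the off-diagonal generators is vacuous, since $\cM_D$ is by definition generated over $\cO_{X\times X}$ by the doubles $g_D$ with $g\in\cM$, and $\psi_D$ sends those to $(\psi\cdot g)_D\in(\psi\cdot\cM)_D$; $\cO_{X\times X}$-linearity then handles the rest.
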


\begin{proof}
Let $h=(h_1,...,h_p)\in \cM_{S_1}$ at $x$. Then $h_D\in \overline{\cM_D}$ at $(x,x)$. We need to check that $(\psi\cdot h)_D\in\overline{(\psi\cdot \cM)_D}$, for all $\psi:X\rightarrow \mbox{Hom}(\bC^p,\bC)$. Let $\phi=(\phi_1,\phi_2):(\bC,0)\rightarrow(X\times X,(x,x))$ be an arbitrary analytic curve. Since $h_D\in\overline{\cM_D}$ then we can write $$h_D\circ\phi=\sum\limits_{j}\alpha_j\phi^*((g_j)_D)$$ with $g_j=(g_{1j},...,g_{pj})\in \cM$ and $\alpha_j\in\cO_{\bC,0}$ for all $j$. Looking to the above equation and comparing the $2p$ coordinates we conclude that $h_i\circ\phi_1=\sum\limits_{j}\alpha_j(g_{ij}\circ\phi_1)$ and $h_i\circ\phi_2=\sum\limits_{j}\alpha_j(g_{ij}\circ\phi_2)$, for all $i\in\{1,...,p\}$. Let $\psi_1,...,\psi_p$ be the coordinate functions of $\psi$.

Thus: $(\psi\cdot h)_D\circ\phi=(\sum\limits_{i}(\psi_i\circ\phi_1)\cdot(h_i\circ\phi_1),\sum\limits_{i}(\psi_i\circ\phi_2)\cdot(h_i\circ\phi_2))\\=(\sum\limits_{i,j}(\psi_i\circ\phi_1)\alpha_j(g_{ij}\circ\phi_1),\sum\limits_{i,j}(\psi_i\circ\phi_2)\alpha_j(g_{ij}\circ\phi_2))\\=\sum\limits_{j}\alpha_j((\psi\cdot g_j)_D\circ\phi)\in(\psi\cdot \cM)_D\circ\phi$.
\end{proof}

The next definition of Lipschitz saturation is motivated by Proposition \ref{proposition G} which relates $\overline{\cM}$ and $\overline{J_k(\cM)}$.

\begin{definition}
	Suppose that $\cM\subseteq\cO_X^p$ is an $\cO_{X}$-submodule of generic rank $k$ on each component of $X$.
	
	The \textbf{3-Lipschitz saturation of $\cM$ at $x\in X$} is denoted by $(\cM_{S_3})_x$, and is defined by $$(\cM_{S_3})_x:=\{h\in\cO_{X,x}^{p}\mid J_k(h,\cM)\subseteq(J_k(\cM))_S\mbox{ at }x\}.$$
	
	In the family case, the \textbf{3-Lipschitz saturation of $\cM$ at $x\in X$ relative to $Y$} is defined as above taking the Lipschitz saturation of $J_k(\cM)$ relative to $Y$.
\end{definition}

The next proposition allows us to see $\cM_{S_3}$ as a sheaf of $\cO_X$-submodules of $\cOXp$.

\begin{proposition} Suppose that $\cM\subseteq\cO_X^p$ is an $\cO_{X}$-submodule of generic rank $k$ on each component of $X$. Then:
	\begin{enumerate}
		\item $\cM_{S_3}$ is an $\cO_X$-submodule of $\cOXp$;
		\item $\cM\subseteq \cM_{S_3} \subseteq \overline{\cM}$. In particular, $\cM$ is a reduction of $\cM_{S_3}$ and $e(\cM,\cM_{S_3})=0$.
	\end{enumerate}
	The same result holds in the family case.
\end{proposition}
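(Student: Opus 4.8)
The plan is to imitate the proofs of Propositions \ref{P15} and \ref{P19}, transporting the module-theoretic statements through the map $\cM\mapsto J_k(\cM)$ and using that Lipschitz saturation of ideals already has the desired formal properties. Fix $x\in X$. For part (1) I would verify that $\cM_{S_3}$ is closed under $\cO_{X,x}$-linear combinations. The key algebraic fact needed is that the ideals $J_k(h,\cM)$ behave well under adding a multiple of $h$: if $h,h'\in\cO_{X,x}^p$ and $\alpha\in\cO_{X,x}$, then $J_k(\alpha h+h',\cM)\subseteq J_k(h,\cM)+J_k(h',\cM)+J_k(\cM)$. This follows by multilinearity of the determinant: any $k\times k$ minor of $[\alpha h+h',\cM]$ that uses the first column expands (by linearity in that column) as $\alpha$ times the corresponding minor of $[h,\cM]$ plus the corresponding minor of $[h',\cM]$, while minors not using the first column already lie in $J_k(\cM)$. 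Since $(J_k(\cM))_S$ is an ideal containing $J_k(\cM)$, and since $h,h'\in\cM_{S_3}$ give $J_k(h,\cM),J_k(h',\cM)\subseteq (J_k(\cM))_S$, we get $J_k(\alpha h+h',\cM)\subseteq (J_k(\cM))_S$, i.e. $\alpha h+h'\in\cM_{S_3}$.

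**Inclusions.** For part (2), the inclusion $\cM\subseteq\cM_{S_3}$ is immediate: if $h\in\cM$ then $(h,\cM)=\cM$, so $J_k(h,\cM)=J_k(\cM)\subseteq (J_k(\cM))_S$. For $\cM_{S_3}\subseteq\overline{\cM}$, let $h\in\cM_{S_3}$. Then $J_k(h,\cM)\subseteq (J_k(\cM))_S\subseteq\overline{J_k(\cM)}$, using the basic inequality $I\subseteq I_S\subseteq\overline{I}$ for ideals recalled after the definition of $I_S$. One subtlety: the generic rank of $(h,\cM)$ on a component $V_i$ could a priori be $k+1$ rather than $k$, in which case $k$ is not the "right" index for applying Proposition \ref{proposition G} directly. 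But if that rank is $k+1$ on some component, then $J_{k+1}((h,\cM_i))$ is nonzero there while still $J_k(h,\cM)\subseteq\overline{J_k(\cM)}$ forces, by the determinantal trick in the proof of Proposition \ref{P18} together with Lemma \ref{L21}, that every $(k+1)\times(k+1)$ minor vanishes on $V_i$ — so in fact the generic rank of $(h,\cM_i)$ equals $k_i=k$ on each component, and Proposition \ref{proposition G} applies to give $h\in\overline{\cM}$. Alternatively, and more cleanly, I would route through Proposition \ref{P18}: the hypothesis $J_k(h,\cM)\subseteq\overline{J_k(\cM)}$ combined with Lemma \ref{L21}(1),(2) shows that $\psi\cdot h\in\overline{\psi\cdot\cM}$ for every $\psi$ with the image of $\psi$ supported on a $k$-index whose column-index starts at $1$, and the remaining $\psi$ are handled exactly as the $j_1>1$ case in the proof of Proposition \ref{P18}; then Proposition \ref{P18} yields $h\in\overline{\cM}$. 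Once $\cM\subseteq\cM_{S_3}\subseteq\overline{\cM}$ is established, the Rees-multiplicity statement $e(\cM,\cM_{S_3})=0$ and the reduction assertion are formal consequences of the fact that a submodule sandwiched between $\cM$ and $\overline{\cM}$ has the same integral closure, hence the same Buchsbaum–Rim multiplicity.

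**Family case and main obstacle.** For the family case, every step is identical once $J_k(\cM)$ and $(J_k(\cM))_S$ are understood relative to $Y$: the determinant expansion in part (1) is pointwise and insensitive to the parameter, and the inclusion chain and multiplicity statement follow from the relative versions of the ideal facts. I expect the only genuine obstacle to be the generic-rank bookkeeping flagged above — making sure that testing against $J_k$ (rather than $J_{k+1}$) really does detect membership in $\overline{\cM}$, i.e. that no component of $(h,\cM)$ secretly jumps rank. Routing the argument through Proposition \ref{P18} and Lemma \ref{L21} rather than directly through Proposition \ref{proposition G} sidesteps this, since Proposition \ref{P18} is stated purely in terms of the rank $k$ of $\cM$ itself. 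Everything else is routine.
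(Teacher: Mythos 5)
Your part (1) and the inclusion $\cM\subseteq\cM_{S_3}$ in part (2) match the paper's proof: multilinearity of the determinant in the $h$-column gives $J_k(\alpha g+h,\cM)\subseteq\alpha J_k(g,\cM)+J_k(h,\cM)\subseteq(J_k(\cM))_S$, and $(g,\cM)=\cM$ for $g\in\cM$. For the inclusion $\cM_{S_3}\subseteq\overline{\cM}$ the paper is terse (it writes ``$J_k(h,\cM)\subseteq\overline{J_k(\cM)}$ which implies that $h\in\overline{\cM}$''), and you are right to flag the generic-rank subtlety: Proposition \ref{proposition G} requires that the index be the generic rank of $(h,\cM_i)$, not of $\cM_i$. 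However, neither of the two fixes you propose actually closes this gap. The ``determinantal trick'' claim---that $J_k(h,\cM)\subseteq\overline{J_k(\cM)}$ forces every $(k+1)\times(k+1)$ minor of $[h,\cM]$ to vanish---is false: take $X$ reduced, $p=2$, $\cM$ free of rank $1$ generated by $g=(1,0)$, and $h=(0,1)$. Then $J_1(\cM)=J_1(h,\cM)=\cO_X$, so $J_1(h,\cM)\subseteq(J_1(\cM))_S\subseteq\overline{J_1(\cM)}$ trivially, yet $\det[h,\cM]=-1\neq0$, the generic rank of $(h,\cM)$ is $2$, and $h\notin\overline{\cM}$. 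Your ``cleaner'' route through Proposition \ref{P18} also does not work, because the inclusion goes the wrong way: Lemma \ref{L21} gives $\psi\cdot h=J_{IJ}(h,\cM)\in\overline{J_k(\cM)}$ and $\psi\cdot\cM\subseteq J_k(\cM)$, hence $\overline{\psi\cdot\cM}\subseteq\overline{J_k(\cM)}$, which cannot be used to place $\psi\cdot h$ inside $\overline{\psi\cdot\cM}$. In the free-module example above, the cofactor map $\psi=(0,1)$ (for $I=(2)$, $J=(1)$) gives $\psi\cdot h=1$ but $\psi\cdot\cM=(0)$, so $\psi\cdot h\notin\overline{\psi\cdot\cM}$, confirming that the hypothesis of Proposition \ref{P18} fails here even though $J_k(h,\cM)\subseteq\overline{J_k(\cM)}$ holds.

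In short: you reproduced the paper's argument faithfully and correctly identified the one nontrivial step, but your attempts to patch it are not valid, and in fact the example above suggests the inclusion $\cM_{S_3}\subseteq\overline{\cM}$ needs an additional hypothesis (or the definition of $\cM_{S_3}$ needs to impose $J_{k'}(h,\cM)\subseteq(J_{k'}(\cM))_S$ for $k'>k$ as well) to rule out the case where adjoining $h$ raises the generic rank. If you want to salvage the argument, you would need either to show that $J_k(h,\cM)\subseteq\overline{J_k(\cM)}$ already forces the generic rank of $(h,\cM)$ to be $k$ (which the example shows is false when $J_k(\cM)$ is the unit ideal), or to build the rank condition into the definition of $\cM_{S_3}$.
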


\begin{proof}
	(1) Let $g,h\in \cM_{S_3}$ at $x\in X$ and $\alpha\in\cO_{X,x}$. By the basic properties of determinants we have that $$J_k(\alpha g+h,\cM)\subseteq \alpha J_k(g,\cM)+J_k(h,\cM)\subseteq(J_k(\cM))_S.$$ 
	
	Hence, $\alpha g+h\in \cM_{S_3}$ at $x$.	 
	
	(2) Since $J_k(\cM)\subseteq (J_k(\cM))_S$ then $\cM\subseteq \cM_{S_3}$. Now, let $h\in \cM_{S_3}$ at $x$. Thus, $J_k(h,\cM)\subseteq (J_k(\cM))_S\subseteq\overline{J_k(\cM)}$ which implies that $h\in\overline{\cM}$.
\end{proof}

\begin{proposition}\label{P22}
	Suppose that $\cM$ has generic rank $k$ on each component of $X$. Then $\cM_{S_2}\subseteq\cM_{S_3}$.
\end{proposition}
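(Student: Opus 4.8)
The plan is as follows. Fix $x\in X$, take an arbitrary $h\in(\cM_{S_2})_x$, and show that $J_k(h,\cM)\subseteq (J_k(\cM))_S$ at $x$; since $(J_k(\cM))_S$ is an ideal of $\cO_{X,x}$ and $J_k(h,\cM)$ is generated by the $k\times k$ minors of a matrix $[h,\cM]=[\,h\ g_1\ \cdots\ g_r\,]$, where $[\cM]=[\,g_1\ \cdots\ g_r\,]$ is a fixed matrix of generators of $\cM$, it suffices to check that each such minor $J_{IJ}(h,\cM)$ lies in $(J_k(\cM))_S$. By the remark following Lemma~\ref{L21} I may assume $I=(i_1<\cdots<i_k)$ and $J=(j_1<\cdots<j_k)$.

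I then distinguish two cases. If $j_1>1$, the minor $J_{IJ}(h,\cM)$ uses only columns of $[\cM]$, so $J_{IJ}(h,\cM)\in J_k(\cM)\subseteq (J_k(\cM))_S$ and there is nothing to prove. If $j_1=1$, Lemma~\ref{L21} produces $\psi:X\rightarrow\mathrm{Hom}(\bC^p,\bC)$ with $\psi\cdot g=J_{IJ}(g,\cM)$ for every $g\in\cO_{X,x}^p$ and $\psi\cdot\cM\subseteq J_k(\cM)$. Because $h\in(\cM_{S_2})_x$, the definition of the second Lipschitz saturation gives $\psi\cdot h\in(\psi\cdot\cM)_S$ at $x$. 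It then remains to promote the inclusion of ideals $\psi\cdot\cM\subseteq J_k(\cM)$ to the inclusion $(\psi\cdot\cM)_S\subseteq (J_k(\cM))_S$ of their Lipschitz saturations; granting this, $J_{IJ}(h,\cM)=\psi\cdot h\in(\psi\cdot\cM)_S\subseteq(J_k(\cM))_S$, which finishes both cases and hence shows $h\in(\cM_{S_3})_x$.

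The only step that is not purely formal — the \emph{monotonicity} of the Lipschitz saturation of ideals — I would deduce from Theorem~2.3 of \cite{G2}: for ideals $A\subseteq B$ of $\cO_{X,x}$ one has $A_D\subseteq B_D$ directly from the definition of the double of an ideal, hence $\overline{A_D}\subseteq\overline{B_D}$; and since $g\in A_S\iff g_D\in\overline{A_D}$ (and likewise for $B$) by Theorem~2.3 of \cite{G2}, it follows that $A_S\subseteq B_S$. Applying this with $A=\psi\cdot\cM$ and $B=J_k(\cM)$ completes the argument. The family case is obtained by the same reasoning, replacing every double and every Lipschitz saturation by its version relative to $Y$.
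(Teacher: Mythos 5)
Your proof is correct and follows essentially the same route as the paper's: both arguments reduce to checking the generators $J_{IJ}(h,\cM)$, both split on whether $j_1>1$ or $j_1=1$, and both apply Lemma~\ref{L21} in the $j_1=1$ case to obtain the functional $\psi$ with $\psi\cdot h=J_{IJ}(h,\cM)$ and $\psi\cdot\cM\subseteq J_k(\cM)$. The only difference is cosmetic: the paper invokes Theorem~2.3 of \cite{G2} at the outset to pass to doubles and then works with $\overline{(\psi\cdot\cM)_D}\subseteq\overline{(J_k(\cM))_D}$, while you stay at the level of Lipschitz saturations of ideals and isolate the monotonicity $A\subseteq B\Rightarrow A_S\subseteq B_S$ (justified via Theorem~2.3 exactly as the paper's display does implicitly) as a self-contained step.
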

\begin{proof}
	Suppose $h\in\cM_{S_2}$ at $x$. By Theorem 2.3 of \cite{G2} it is enough to prove that $(J_k(h,\cM))_D\subseteq\overline{(J_k(\cM))_D}$ at $(x,x)$. So, we have to prove that all the generators $(J_{IJ}(h,\cM))_D$ are in $\overline{(J_k(\cM))_D}$, for all $i\in\{1,...n\}$ and for all indexes $I$ and $J$. Write $I=(i_1,...,i_k)$ and $J=(j_1,...,j_k)$. We have two cases. 
	
	Suppose $j_1>1$. Then, as we saw before, $J_{IJ}(h,\cM)\in J_k(\cM)$, so $(J_{IJ}(h,\cM))_D\in (J_k(\cM))_D$, in particular, $(J_{IJ}(h,\cM))_D\in \overline{(J_k(\cM))_D}$.
	
	Suppose $j_1=1$. By Lemma \ref{L21} there exists $\psi:X\rightarrow \mbox{Hom}(\bC^p,\bC)$ such that $\psi\cdot h=J_{IJ}(h,\cM)$ and $\psi\cdot \cM\subseteq J_k(\cM)$. Since $h$ is in the 2-Lipschitz Saturation of $\cM$ at $x$ then $\psi\cdot h\in(\psi\cdot \cM)_S$, which is equivalent to $(\psi\cdot h)_D\in\overline{(\psi\cdot \cM)_D}$. Thus, $(J_{I,J}(h,\cM))_D=(\psi\cdot h)_D\in\overline{(\psi\cdot \cM)_D}\subseteq\overline{(J_k(\cM))_D}$.
\end{proof}

The next result shows the third saturation is closely related to the pullback of the saturated blow-up map.

\begin{proposition}\label{P20}
	Suppose that $\cM$ has generic rank $k$ on every component of $X$. Let $\pi: SB_{J_{k}(\cM)}(X)\rightarrow B_{J_{k}(\cM)}(X)$, $p: B_{J_{k}(\cM)}(X)\rightarrow X$ and $\pi_S=p\circ\pi$ be the projections maps. Let $h\in\cO_{X,x}^p$. Then: \begin{center} $h\in\cM_{S_3}$ if and only if $\pi_S^{*}(h)\in\pi_S^{*}(\cM)$.\end{center}
\end{proposition}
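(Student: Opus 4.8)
The plan is to reduce the module statement to the corresponding statement for the ideal $J_k(\cM)$, where it is essentially the definition of the Lipschitz saturation of an ideal. First recall what we must show: $h \in \cM_{S_3}$ means by definition that $J_k(h,\cM) \subseteq (J_k(\cM))_S$ at $x$, and by the definition of the Lipschitz saturation of an ideal, $g \in (J_k(\cM))_S$ if and only if $\pi_S^*(g) \in \pi_S^*(J_k(\cM))$, where $\pi_S \colon SB_{J_k(\cM)}(X) \to X$ is the saturated blow-up map. So the first step is to observe that $h \in \cM_{S_3}$ is equivalent to: for every $k\times k$ minor $\mu = J_{IJ}(h,\cM)$ of a matrix of generators of $(h,\cM)$, one has $\pi_S^*(\mu) \in \pi_S^*(J_k(\cM))$.

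The second step is to unwind what $\pi_S^*(h) \in \pi_S^*(\cM)$ means. On the blow-up $B = B_{J_k(\cM)}(X)$, the pullback $p^*(J_k(\cM))$ is locally principal, generated by some $g_0$; more to the point, by the standard relation between the blow-up of $J_k(\cM)$ and the module $\cM$ (as in the tautological/Rees-module picture used in \cite{G3}), on $B$ the pulled-back module $p^*(\cM)$ is, locally, a free rank-$k$ direct summand times $g_0$ — equivalently, locally on $B$ there is a choice of $k$ of the generators of $\cM$ whose $k\times k$ minor generates $p^*(J_k(\cM))$ and in terms of which every other generator, and more generally $p^*(h)$ for $h \in \cM_{S_3}$, can be written. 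Concretely: $p^*(h) \in p^*(\cM) \cdot \mathcal{O}_B$ holds iff all the minors $J_{IJ}(h,\cM)$ pull back into the principal ideal $p^*(J_k(\cM))$, i.e. iff $J_k(h,\cM)\,\mathcal{O}_B = J_k(\cM)\,\mathcal{O}_B$ on $B$. This is the Cramer's-rule bookkeeping that identifies "$h$ is in the pulled-back module" with "$h$ adds no new maximal minors after pullback", and it is valid after a further pullback along the finite map $\pi \colon SB \to B$ since finite pullback is exact enough not to disturb the containment of ideals or modules.

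The third step is to combine the two: on $SB$, $\pi_S^*(h) \in \pi_S^*(\cM)$ holds iff $\pi_S^*(J_{IJ}(h,\cM)) \in \pi_S^*(J_k(\cM))$ for all $I,J$ (the minors with $j_1>1$ are automatically in $J_k(\cM)$ already, so they pull back trivially; the content is in the minors with $j_1=1$, i.e. those genuinely involving a column of $h$), which by the ideal case is precisely $J_k(h,\cM) \subseteq (J_k(\cM))_S$, i.e. $h \in \cM_{S_3}$. One should be slightly careful that $SB_{J_k(\cM)}(X)$ is the saturation of $B_{J_k(\cM)}(X)$ and that $\pi$ is finite, so that $\pi^*$ of a principal ideal is principal and the minor comparison on $B$ transfers verbatim to $SB$; this is where the generic-rank-$k$ hypothesis is used, to guarantee that $J_k(\cM)$ is the "right" ideal (nonzero on every component) so that the blow-up and its saturation are the relevant objects.

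The main obstacle I expect is the precise formulation and justification of the second step — the equivalence, over the blow-up $B$, between "$p^*(h)$ lies in the submodule $p^*(\cM)$" and "the $k\times k$ minors $J_{IJ}(h,\cM)$ pull back into the locally principal ideal $p^*(J_k(\cM))$". On an open set of $B$ where $p^*(J_k(\cM))$ is generated by a single nonvanishing-where-it-matters minor $J_{I_0 J_0}(\cM)$, one writes the corresponding $k$ generators $g_{j}$, $j \in J_0$, restricted to the rows $I_0$, as an invertible $k\times k$ matrix over $\mathcal{O}_B$ (after inverting $J_{I_0J_0}(\cM)$, but being careful it does not vanish identically there), solves for the coefficients expressing $p^*(h)$, and checks the remaining rows and remaining generators are consistent exactly when all the other minors vanish modulo $p^*(J_k(\cM))$. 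Making this rank/Cramer argument rigorous and component-by-component (using the irreducible components $\{V_i\}$ and the common generic rank $k$) — and checking it survives the finite pullback $\pi$ to $SB$ — is the real work; everything else is definition-chasing against \cite{G2} and \cite{G3}.
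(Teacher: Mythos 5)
Your overall strategy (reduce to the ideal case on the blow-up via Cramer's rule) is the same as the paper's, and you correctly identify the crux: showing that on the blow-up, membership of $\pi_S^*(h)$ in $\pi_S^*(\cM)$ is governed by the minors $J_{IJ}(h,\cM)$. But your key claimed equivalence --- that $p^*(h)\in p^*(\cM)$ on $B$ \emph{iff} $J_k(h,\cM)\,\cO_B = J_k(\cM)\,\cO_B$ --- is false as stated, and the gap is precisely the ingredient you don't mention. Take $X=\bC$, $\cM=(x,0)\,\cO_1\subseteq\cO_1^2$ (generic rank $k=1$, $J_1(\cM)=(x)$ already principal so $B=X$), and $h=(x,x)$: then $J_1(h,\cM)=(x)=J_1(\cM)$, yet $(x,x)\notin(x,0)\,\cO_1$. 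The Cramer relation controls only the rows indexed by $I$; getting the remaining rows "for free'' requires knowing that $(h,\cM)$ also has generic rank $k$, equivalently $J_{k+1}(h,\cM)=0$. The paper supplies this by first invoking $\cM_{S_3}\subseteq\overline{\cM}$, so $h\in\overline{\cM}$, then running a curve argument on $SB$: for a curve $\phi$ along which $\pi_S^*(\cM)$ has generic rank $k$, the difference $\star$ between $h\circ\pi_S\circ\phi$ and the Cramer combination lies in $\phi^*(\pi_S^*(\cM))$ (because $h\in\overline{\cM}$) and has zero $I$-entries, so $\star\neq 0$ would force rank $k+1$. This integral-closure-plus-rank step is essential and is exactly what your proposal leaves out.

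Two smaller points. First, your sketch speaks of ``inverting $J_{I_0J_0}(\cM)$'' over $\cO_B$; that minor vanishes identically on the exceptional divisor, so it is not a unit. What actually makes the division legitimate is the hypothesis $h\in\cM_{S_3}$: it guarantees the quotients $J_{I,j}(h_I,\cM_I)/J_{IJ}(\cM)$ are regular sections of $\cO_{SB}$, which is what the paper divides by. Second, the converse direction (from $\pi_S^*(h)\in\pi_S^*(\cM)$ back to $h\in\cM_{S_3}$) is the easy one: write $h\circ\pi_S=\sum_j\alpha_j(g_j\circ\pi_S)$ and expand each minor $\pi_S^*(\det[h,\cM]_{IJ})$ by multilinearity of the determinant in the column carrying $h$ to land in $\pi_S^*(J_k(\cM))$; that's more than ``definition-chasing'' and is worth writing out.
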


\begin{proof}
	Fix a set of generators $\{g_1,...,g_r\}$ of $\cM$.
	We work at $x'\in E$, $E=\pi^{-1}(E_B)$, $E_B$ the exceptional divisor of $B_{J_{k}(\cM)}(X)$. 
	
	Suppose first that $J_k(h,\cM)\subseteq (J_k(\cM))_S$. Let $\pi_S^{*}(J_{I,J}(\cM))$ be a local generator of the principal ideal $\pi_S^{*}(J_k(\cM))$. Then, by Cramer's rule we can write $$(J_{I,J}(\cM)\circ\pi_S)(h_I\circ\pi_S)=\sum\limits_{j\in J}(J_{I,j}(h_I,\cM_I)\circ\pi_S)(m_{I,j}\circ\pi_S) \eqno(1)$$ with $m_j\in \cM$. We claim in fact that $$(J_{I,J}(\cM)\circ\pi_S)(h\circ\pi_S)=\sum\limits_{j\in J}(J_{I,j}(h_I,\cM_I)\circ\pi_S)(m_{j}\circ\pi_S).$$ To see this pick a curve $\phi:(\bC,0)\rightarrow(SB_{J_k(\cM)}(X),x')$ and choose $\phi$ so that the rank of $\pi_S^*(\cM)|_{\phi}$ is generically $k$. Since by hypothesis $h\in\overline{\cM}$ then $h\circ\pi_S\circ\phi\in\phi^*(\pi_S^*(\cM))$. So, the element $$\star=h\circ\pi_S\circ\phi-\sum\limits_{j\in J}(\frac{J_{I,j}(h_I,\cM_I)\circ\pi_S}{J_{I,J}(\cM)\circ\pi_S}\circ\phi)(m_{j}\circ\pi_S\circ\phi)\in\phi^*(\pi_S^*(\cM))$$ because the above quotients by hypothesis are regular functions on $SB_{J_k(\cM)}(X)$. By equation (1), the above element has $0$ for the entries indexed by $I$. So, if $\star$ is not zero then $\phi^*(\pi_S^*(\cM))$ has rank at least $k+1$, which is a contradiction. Since the images of $\phi$ fill up a Z-open set it follows that $\star$ is locally zero. Therefore, $$h\circ\pi_S=\sum\limits_{j\in J}(\frac{J_{I,j}(h_I,\cM_I)\circ\pi_S}{J_{I,J}(\cM)\circ\pi_S})(m_{j}\circ\pi_S)\in\pi_S^*(\cM).$$
	
	Conversely, suppose that $\pi_S^{*}(h)\in\pi_S^{*}(\cM)$. Then we can write $$h\circ\pi_S=\sum\limits_{j=1}^{r}\alpha_j(g_j\circ\pi_S) \eqno(2)$$
	where $\alpha_j\in\widetilde{\cO}_{B_{J_k(\cM)}(X),x'}$, $\forall j\in\{1,...,r\}$. 
	
	\noindent Let $c$ be an arbitrary generator of $J_k(h,\cM)$. Then we can write $c=\det[h,\cM]_{IJ}$ for some $k$-indexes $I$ and $J$. If the $k$-index $J$ does not pick the first column of $[h,\cM]$, then $c$ is a $k\times k$ minor of the matrix $[\cM]$, hence $c\in J_k(\cM)\subseteq (J_k(\cM))_S$ and we are done. Thus we may assume that $j_1=1$. Then, we can write $$c=\det\left[\begin{matrix}
	h_I  &  (g_{j_1})_I  &... &(g_{j_{k-1}})_I
	\end{matrix}\right].$$
	Applying the pullback of the projection map we have $$\pi_S^*(c)=\det\left[\begin{matrix}
	h_I\circ\pi_S  &  (g_{j_1})_I\circ\pi_S  &... &(g_{j_{k-1}})_I\circ\pi_S
	\end{matrix}\right].$$
	
	\noindent By the equation (2), if we look only to the entries of the $k$-index $I$, we get $$h_I\circ\pi_S=\sum\limits_{j=1}^{r}\alpha_j((g_j)_I\circ\pi_S).$$
	
	\noindent Using the linearity of the determinant in the first column, we conclude that $\pi_S^*(c)=\sum\limits_{j=1}^{r}\alpha_j(v_j\circ\pi_S)$, where $v_j:=\det\left[\begin{matrix}
	(g_j)_I  &  (g_{j_1})_I  &... &(g_{j_{k-1}})_I
	\end{matrix}\right],$ for all $j\in\{1,..,r\}$. Clearly, $v_j\in J_k(\cM)$, $\forall j\in\{1,...,r\}$. Hence, $\pi_S^*(c)\in\pi_S^*(J_k(\cM))$ and by the definition of the Lipschitz saturation of an ideal, we conclude that $c\in(J_k(\cM))_S$. 
\end{proof}

\begin{corollary}\label{C21} Suppose that $\cM$ has generic rank $k$ on every component of $X$.
	If $h\in\cM_{S_3}$ then there exists an open covering of $X$ such that $h$ can be written on each $U$ of the covering as an element of $\cM$ by using Lipschitz functions (Lipschitz with respect to $(z_i)$ and $(\frac{J_{I',J'}(\cM)}{J_{I,J}(\cM)})$, where $J_{I,J}(\cM)$ is the minor which gives the local generator on the preimage of $U$ in $SB_{J_k(\cM)}(X)$) .
\end{corollary}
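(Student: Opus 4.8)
The statement is essentially a reformulation of Proposition \ref{P20} made explicit, so the plan is to unwind the proof of that proposition and track which functions get used as coefficients. First I would start from the hypothesis $h\in\cM_{S_3}$ and invoke Proposition \ref{P20} to get $\pi_S^*(h)\in\pi_S^*(\cM)$ on the saturated blow-up $SB_{J_k(\cM)}(X)$. Cover $X$ by opens $U$ small enough that on the preimage of each $U$ in $SB_{J_k(\cM)}(X)$ the pulled-back ideal $\pi_S^*(J_k(\cM))$ is principal, generated by $\pi_S^*(J_{I,J}(\cM))$ for some fixed pair of indexes $I,J$; this is possible since the exceptional divisor is locally principal and $J_k(\cM)$ is finitely generated, so finitely many minors generate the pullback ideal and one of them generates it on a suitable open. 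Then, from the proof of Proposition \ref{P20}, on that open we have the identity
$$
h\circ\pi_S=\sum_{j\in J}\left(\frac{J_{I,j}(h_I,\cM_I)\circ\pi_S}{J_{I,J}(\cM)\circ\pi_S}\right)(m_j\circ\pi_S),
$$
expressing $h$ as a combination of elements $m_j\in\cM$ with coefficients that are the restrictions to $SB_{J_k(\cM)}(X)$ of the quotients $J_{I,j}(h_I,\cM_I)/J_{I,J}(\cM)$.

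The remaining point is to argue that these coefficient functions are Lipschitz with respect to the coordinates $(z_i)$ and the functions $(J_{I',J'}(\cM)/J_{I,J}(\cM))$. Each numerator $J_{I,j}(h_I,\cM_I)$ is itself, up to sign and up to terms lying in $J_k(\cM)$ (handled by the $j_1>1$ case as in Proposition \ref{P20}), one of the minors $J_{I',J'}(h,\cM)$; since $h\in\cM_{S_3}$ means precisely $J_k(h,\cM)\subseteq (J_k(\cM))_S$, by the very definition of the Lipschitz saturation of an ideal the pullback $\pi_S^*(J_{I,j}(h_I,\cM_I))$ lies in $\pi_S^*(J_k(\cM))$, which is generated by $\pi_S^*(J_{I,J}(\cM))$; hence the quotient is a regular function on $SB_{J_k(\cM)}(X)$. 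By the defining property of the saturated blow-up (its coordinate ring is the Lipschitz saturation of that of $B_{J_k(\cM)}(X)$), every regular function on $SB_{J_k(\cM)}(X)$ is Lipschitz in the generators $(z_i)$ together with the Lipschitz generators $(J_{I',J'}(\cM)/J_{I,J}(\cM))$ of the exceptional divisor coordinates. This gives the claimed expression of $h$ over each $U$.

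The main obstacle I expect is the bookkeeping in identifying the numerators $J_{I,j}(h_I,\cM_I)$ that appear in the Cramer's rule expansion with actual generators of the ideal $J_k(h,\cM)$ (so that the hypothesis $h\in\cM_{S_3}$ can be applied to each of them), together with checking that the open cover can be chosen uniformly so that a single minor $J_{I,J}(\cM)$ serves as the principal generator on the preimage of each $U$; once the identity from Proposition \ref{P20} is in place and the exceptional divisor is trivialized, the Lipschitz claim is immediate from the definition of the saturated blow-up. There is essentially no new content beyond Proposition \ref{P20} and the definition of $SB$; it is a matter of stating explicitly what was proved there.
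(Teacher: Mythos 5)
Your proposal is correct and matches the paper's intent: the paper supplies no separate proof of this corollary, treating it as an immediate unpacking of Proposition~\ref{P20}, and your plan spells out exactly that unpacking. The Cramer's-rule coefficients $\frac{J_{I,j}(h_I,\cM_I)\circ\pi_S}{J_{I,J}(\cM)\circ\pi_S}$ appearing in the forward direction of the proof of Proposition~\ref{P20} are regular on $SB_{J_k(\cM)}(X)$ precisely because $J_k(h,\cM)\subseteq(J_k(\cM))_S$, and by the defining property of the saturated blow-up (its structure sheaf is the Lipschitz saturation of $\cO_{B_{J_k(\cM)}(X)}$, which by Pham--Teissier consists of the Lipschitz meromorphic fractions) these coefficients are Lipschitz with respect to $(z_i)$ and $(J_{I',J'}(\cM)/J_{I,J}(\cM))$, as claimed.
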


Now we look for conditions so that the above notions of Lipschitz saturation are equivalent.

The next proposition gives us such condition.

\begin{proposition}\label{T4.17}
Let $\cM$ be an $\cO_{X}$-submodule of $\cO_{X}^p$ with generic rank $k$ on each component of $X$, $x\in X$ and $h\in\cO_{X,x}^p$. Suppose that there exists an ideal $I$ of $\cO_{X\times X,(x,x)}^{2p}$ such that

$$ IJ_2((J_k(\cM))_D)\subseteq\overline{J_{2k}(\cM_D)}$$
$$J_{2k}(h_D,\cM_D)\subseteq\overline{IJ_2((J_k(h,\cM))_D)}$$ at $(x,x)$. Then:

 $$h\in(\cM_{S_1})_x \Leftrightarrow h\in(\cM_{S_2})_x \Leftrightarrow h\in(\cM_{S_3})_x $$
\end{proposition}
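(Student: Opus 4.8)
The plan is to prove the three equivalences by establishing the single inclusion $(\cM_{S_3})_x\subseteq(\cM_{S_1})_x$ and combining it with the inclusions already available: Propositions~\ref{P23} and~\ref{P22} give $(\cM_{S_1})_x\subseteq(\cM_{S_2})_x\subseteq(\cM_{S_3})_x$ with no extra hypotheses, so the role of the two displayed hypotheses on $I$ is precisely to close the cycle, i.e.\ to force $h\in(\cM_{S_3})_x\Rightarrow h_D\in\overline{\cM_D}$ at $(x,x)$.

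First I would unwind the two ends. If $h\in(\cM_{S_3})_x$ then $J_k(h,\cM)\subseteq(J_k(\cM))_S$ at $x$; applying Theorem 2.3 of \cite{G2} to the ideal $J_k(\cM)$ this means $c_D\in\overline{(J_k(\cM))_D}$ for every $c\in J_k(h,\cM)$, and since $(J_k(h,\cM))_D$ is generated by these doubles, it says $(J_k(h,\cM))_D\subseteq\overline{(J_k(\cM))_D}$ at $(x,x)$. At the other end, to get $h_D\in\overline{\cM_D}$ I would apply Proposition~\ref{proposition G} to the submodule $\cM_D\subseteq\cO_{X\times X}^{2p}$ on the (generally reducible) germ $X\times X$, whose irreducible components are the $V_i\times V_j$: since $h\in(\cM_{S_3})_x$ forces $h\in\overline{\cM}$, $h$ lies in the generic linear span of $\cM$ on each component of $X$, so by Corollary~\ref{C2.10} and Proposition~\ref{P2.13} both $\cM_D$ and $(h_D,\cM_D)$ have generic rank $2k$ on every component of $X\times X$; hence it suffices to prove $J_{2k}(h_D,\cM_D)\subseteq\overline{J_{2k}(\cM_D)}$ at $(x,x)$.

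The bridge between the two reformulations is the following fact, which I would record (or cite) and prove via the curve criterion: if $N\subseteq\overline{N'}$ are finitely generated $\cO_{X\times X,(x,x)}$-submodules of a free module, then $J_\ell(N)\subseteq\overline{J_\ell(N')}$ for every $\ell$. Indeed, after pulling back along any analytic curve $\phi$ one has $N\circ\phi\subseteq\overline{N'\circ\phi}=N'\circ\phi$, since a finitely generated submodule of a free module over the discrete valuation ring $\cO_{\bC,0}$ is integrally closed (elementary divisors plus integral closedness of ideals of a DVR and the fact that integral closure of modules commutes with linear maps); hence every $\ell\times\ell$ minor of a generator matrix of $N\circ\phi$ already lies in $J_\ell(N'\circ\phi)=\phi^{*}J_\ell(N')$. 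Applying this with $N=(J_k(h,\cM))_D$ and $N'=(J_k(\cM))_D$ yields $J_2((J_k(h,\cM))_D)\subseteq\overline{J_2((J_k(\cM))_D)}$, and then the two hypotheses finish the argument through
\begin{align*}
J_{2k}(h_D,\cM_D)&\subseteq\overline{I\,J_2((J_k(h,\cM))_D)}\subseteq\overline{I\,\overline{J_2((J_k(\cM))_D)}}\\
&=\overline{I\,J_2((J_k(\cM))_D)}\subseteq\overline{\,\overline{J_{2k}(\cM_D)}\,}=\overline{J_{2k}(\cM_D)},
\end{align*}
using the standard facts $\mathfrak a\,\overline{\mathfrak b}\subseteq\overline{\mathfrak a\mathfrak b}$ together with monotonicity and idempotence of integral closure of ideals. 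This gives $h_D\in\overline{\cM_D}$ at $(x,x)$, i.e.\ $h\in(\cM_{S_1})_x$; the same chain of arguments runs verbatim in the family case, with all doubles taken relative to $Y$.

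The step I expect to be the main obstacle is the passage from the \emph{module} containment $(J_k(h,\cM))_D\subseteq\overline{(J_k(\cM))_D}$ to the \emph{ideal} containment of their $2\times 2$ minors, i.e.\ the fact about $J_\ell$ and integral closure above, together with the bookkeeping needed to legitimately apply Proposition~\ref{proposition G} to $\cM_D$ on the reducible germ $X\times X$ — one must verify that the generic rank is the constant $2k$ on each component $V_i\times V_j$ and relate the globally defined ideal $J_{2k}(\cM_D)$ to the ideals computed on the components. Once these are in place, the remainder is a formal manipulation of integral closures of ideals.
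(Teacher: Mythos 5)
Your proposal is correct and takes essentially the same route as the paper: use Propositions~\ref{P23} and~\ref{P22} for $\cM_{S_1}\subseteq\cM_{S_2}\subseteq\cM_{S_3}$, translate $h\in\cM_{S_3}$ into $(J_k(h,\cM))_D\subseteq\overline{(J_k(\cM))_D}$ via Theorem~2.3 of \cite{G2}, pass to $J_{2k}(h_D,\cM_D)\subseteq\overline{J_{2k}(\cM_D)}$ using the two hypotheses, and conclude $h_D\in\overline{\cM_D}$ from Proposition~\ref{proposition G} and Corollary~\ref{C2.10}. The only real difference is packaging: the paper carries out the middle step entirely inside a single pullback along a test curve $\phi$ (exploiting $\phi^*((J_k(h,\cM))_D)=\phi^*((J_k(\cM))_D)$ over the DVR $\cO_{\bC,0}$), whereas you isolate the monotonicity of $J_\ell$ under integral closure as a reusable lemma and finish by formal manipulation of closures of ideals, which is a clean reorganization of the same curve-criterion argument.
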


\begin{proof}
	The first implication follows from Proposition \ref{P23}. The second one follows from Proposition \ref{P22}.
	
	Suppose $h\in\cM_{S_3}$ at $x$. Then, $(J_k(h,\cM))_D\subseteq\overline{(J_k(\cM))_D}$ and so $\overline{(J_k(h,\cM))_D}=\overline{(J_k(\cM))_D}$.
	
	Let us prove that $J_{2k}(h_D,\cM_D)\subseteq\overline{J_{2k}(\cM_D)}$. In fact, let $\phi:(\bC,0)\rightarrow(X\times X,(x,x))$ be an arbitrary analytic curve. Since $\overline{(J_k(h,\cM))_D}=\overline{(J_k(\cM))_D}$ then $\phi^*((J_k(h,\cM))_D)=\phi^*((J_k(\cM))_D)$. Using the inclusions of the hypothesis and the curve criterion for the integral closure of modules we have $\phi^*(J_{2k}(h_D,\cM_D))\subseteq\phi^*(I\cdot J_2((J_k(h,\cM))_D))\\=\phi^*(I)\cdot J_2(\phi^*((J_k(\cM))_D))=\phi^*(I\cdot J_2((J_k(\cM))_D))\subseteq\phi^*(J_{2k}(\cM_D))$.
	
	By Theorem 2.9 of \cite{SG} we have that $\cM_D$ has generic rank $2k$ on each component of $X\times X$. Therefore, by Proposition \ref{proposition G} we conclude that $h_D\in\overline{\cM_D}$.  	 
\end{proof}	

In order to use the above criterion, we prove some useful lemmas that allow us to get the desired equivalence.

\textbf{Notation:} For each object $A$, we denote $A=A\circ\pi_1$ and $A'=A'\circ\pi_2$. For $k$-indexes $I,J$, we denote by $\cM_{IJ}$ the submatrix of $[\cM]$ defined by these $k$-indexes.

\begin{lemma}
	Let $\cM\subseteq\cO_{X,x}^p$ be a submodule and $k\in\bN$. Then $$(z_{t_1}-z_{t_1}')...(z_{t_k}-z_{t_k}')\det(\cM_{IJ})\det(\cM_{KL}')\in J_{2k}(\cM_D) \mbox{ at }(x,x)$$ for every $t_1,...,t_k\in\{1,...,n\}$ and $k$-indexes $I,J,K,L$.
	
\end{lemma}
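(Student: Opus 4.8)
The plan is to realize the product $(z_{t_1}-z_{t_1}')\cdots(z_{t_k}-z_{t_k}')\det(\cM_{IJ})\det(\cM_{KL}')$ as a single $2k\times 2k$ minor of a matrix of generators of $\cM_D$. The natural generating set to use is $\cB$ from Proposition \ref{P2}, namely $\{(h_j)_D\}$ together with the vectors $(0_{\cO_{X\times X}^p},(z_i\circ\pi_1-z_i\circ\pi_2)(h_j\circ\pi_2))$. Write each $(h_j)_D$ as a column vector of length $2p$ whose top half is $h_j\circ\pi_1$ and whose bottom half is $h_j\circ\pi_2$; write the auxiliary generator $w_{ij}:=(0,(z_i-z_i')(h_j\circ\pi_2))$ as a column whose top half is $0$ and whose bottom half is $(z_i-z_i')(h_j\circ\pi_2)$.

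First I would select $2k$ columns of $[\cM_D]$: take the $k$ columns $(g_{j_1})_D,\ldots,(g_{j_k})_D$ indexed by the column index $L$ (these will carry the factor $\det(\cM_{KL}')$ through the lower block), and take the $k$ columns $w_{t_1 j'_1},\ldots,w_{t_k j'_k}$ where $j'_1,\ldots,j'_k$ are the column indices $J$ (these carry the factor $(z_{t_1}-z_{t_1}')\cdots(z_{t_k}-z_{t_k}')\det(\cM_{IJ})$, again sitting in the lower block, after pulling out the scalars $(z_{t_s}-z_{t_s}')$). Then I would select $2k$ rows: the $k$ rows of the lower block indexed by $K$ (rows $p+k_1,\ldots,p+k_k$) and the $k$ rows of the lower block indexed by $I$ (rows $p+i_1,\ldots,p+i_k$). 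In the resulting $2k\times 2k$ submatrix the columns $(g_{j})_D$ restricted to these lower rows give the entries of $\cM'$, and the columns $w_{t_s j}$ restricted to these lower rows give, after extracting the scalar $(z_{t_s}-z_{t_s}')$, the entries of $\cM'$ as well — but with a subtlety: both families of selected columns land in the \emph{same} lower block, so without care the minor would only involve $\cM'$ and not $\det(\cM_{IJ})$ versus $\det(\cM_{KL}')$ in the way we want.

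To fix this I would instead keep the $(g_j)_D$ columns but select the \emph{upper} block rows $i_1,\ldots,i_k$ from them (giving $\det(\cM_{IJ})$... actually giving $\cM$, not $\cM'$), so let me reorganize: use generating set $\cB$, pick columns $(g_{l_1})_D,\ldots,(g_{l_k})_D$ (column index $L$) and $w_{t_1 j_1},\ldots,w_{t_k j_k}$ (auxiliary, parameters $t_s$ and base-column index $J$); pick rows $i_1,\ldots,i_k$ from the \emph{upper} half and $p+k_1,\ldots,p+k_k$ from the \emph{lower} half. The upper-half rows kill the $w$-columns entirely (their top block is $0$), so the $2k\times 2k$ submatrix is block-triangular: the top-left $k\times k$ block is $(\cM_{IL})$ composed with $\pi_1$...

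The cleanest correct choice, which I would write up, is: columns $(g_{j_1})_D,\ldots,(g_{j_k})_D$ with column-index $J$ and auxiliary columns $w_{t_1 l_1},\ldots,w_{t_k l_k}$ with base index $L$; rows $i_1,\ldots,i_k$ in the top block and $p+k_1,\ldots,p+k_k$ in the bottom block. Top block sees only the $(g_j)_D$ columns and yields $\det(\cM_{IJ})\circ\pi_1$; the bottom-left block contributes nothing beyond what the cofactor expansion along the top handles, and the bottom-right block, from the $w$-columns restricted to rows $p+k_s$, equals $\mathrm{diag}$-free but after factoring $(z_{t_1}-z_{t_1}')\cdots(z_{t_k}-z_{t_k}')$ out of those $k$ columns (multilinearity of the determinant in columns) reduces to $\det(\cM_{KL})\circ\pi_2 = \det(\cM_{KL}')$. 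Block-triangularity (upper-right block zero) gives the determinant as the product of the two diagonal blocks, i.e. exactly $(z_{t_1}-z_{t_1}')\cdots(z_{t_k}-z_{t_k}')\det(\cM_{IJ})\det(\cM_{KL}')$, up to sign, which is absorbed. Since this is a $2k\times 2k$ minor of $[\cM_D]$ for the generating set $\cB$, it lies in $J_{2k}(\cM_D)$, and since $J_{2k}(\cM_D)$ is independent of the chosen generating matrix, we are done.

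The main obstacle is purely bookkeeping: arranging the row/column selection so that the submatrix is genuinely block-triangular (so that the determinant factors) and so that the $z_{t_s}-z_{t_s}'$ scalars can be pulled out by column-multilinearity without disturbing the $\det(\cM_{IJ})$ factor. I would handle the signs by noting that $J_{2k}$ is generated by all $2k\times 2k$ minors, so sign ambiguities are irrelevant; and I would verify block-triangularity by observing that the auxiliary generators $w_{ij}$ have vanishing first ($\cO_{X\times X}^p$) component, so restricting them to the first $p$ rows kills them. No deeper input is needed beyond Proposition \ref{P2} and the standard fact that $J_{2k}$ of a module does not depend on the matrix of generators.
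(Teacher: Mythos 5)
Your final ``cleanest correct choice'' is correct and is essentially the same argument the paper gives: both exhibit the same $2k\times 2k$ submatrix of $[\cM_D]$ (columns $(g_j)_D$ for $j\in J$ together with the auxiliary columns $(0,(z_{t_s}-z'_{t_s})(g_{l_s}\circ\pi_2))$ for $l_s\in L$; rows $I$ from the top block and $K$ from the bottom block), note that its upper-right $k\times k$ block vanishes so the determinant factors as the product of the diagonal blocks, and pull the scalars $(z_{t_s}-z'_{t_s})$ out of the right-hand columns by multilinearity to get $(z_{t_1}-z'_{t_1})\cdots(z_{t_k}-z'_{t_k})\det(\cM_{IJ})\det(\cM_{KL}')$. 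The earlier false starts in your write-up should be deleted; also, the bottom-left block in this submatrix is $\cM_{KJ}'$ (not $\cM_{IJ}'$ as a small typo in the paper suggests), but block-triangularity makes it irrelevant either way.
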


\begin{proof}
	Write $\cM_{KL}=(g_{rs})$. The $2k\times 2k$ matrix $$\left[\begin{matrix}
	
	\cM_{IJ}   &  |  & 0_{k\times k} \\
	--------&      &  ------------\\
	 
	\cM_{IJ}'   &   |   &  
	\begin{matrix}
	(z_{t_1}-z_{t_1}')g'_{11}    &   ...   & (z_{t_k}-z_{t_k}')g'_{1k}\\
	       \vdots                &         &      \vdots    \\
	(z_{t_1}-z_{t_1}')g'_{k1}    &   ...   & (z_{t_k}-z_{t_k}')g'_{kk}       
	\end{matrix}

	\end{matrix}\right]$$ is a submatrix of $[\cM_D]$. 
	
	So, its determinant, which is $(z_{t_1}-z_{t_1}')...(z_{t_k}-z_{t_k}')\det(\cM_{IJ})\det(\cM_{KL}')$, belogns to $J_{2k}(\cM_D)$.	
\end{proof}

The next result gives us the first condition of Proposition \ref{T4.17} in terms of the ideal coming from the diagonal.

\begin{lemma}\label{L4.20}
	Let $\cM\subseteq\cO_{X,x}^p$ be a submodule and $k\in\bN$.
	\begin{itemize}
		\item[a)] $I_{\Delta}^{k}J_2((J_k(\cM))_D)\subseteq J_{2k}(\cM_D)$ at $(x,x)$;
		\item[b)] If $J_k(\cM)$ is principal then $I_{\Delta}^{k-1}J_2((J_k(\cM))_D)\subseteq J_{2k}(\cM_D)$ at $(x,x)$.
	\end{itemize}
\end{lemma}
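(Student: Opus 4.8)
The plan is to prove both inclusions by exhibiting, for each generator of $J_2((J_k(\cM))_D)$, an explicit $2k\times 2k$ submatrix of $[\cM_D]$ whose determinant realizes the product of that generator with the appropriate power of a generator of $I_\Delta$. Recall that $J_2((J_k(\cM))_D)$ is generated by the $2\times 2$ minors of the matrix of generators of $(J_k(\cM))_D$; by Lemma \ref{L1}(3) and Proposition \ref{P2}, these generators are the doubles $(\det(\cM_{IJ}))_D$ together with diagonal-type elements $(0,(z_t\circ\pi_1-z_t\circ\pi_2)\det(\cM_{IJ})\circ\pi_2)$. A $2\times 2$ minor of a $2\times m$ matrix whose rows are "$A$-at-$\pi_1$" and "$A'$-at-$\pi_2$" has the shape $\det(\cM_{IJ})\det(\cM_{KL}') - \det(\cM_{IL})\det(\cM_{KJ}')$ (or variants involving the diagonal factors $z_t - z_t'$). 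So the task reduces to expressing $(z_{t_1}-z'_{t_1})\cdots(z_{t_{k-1}}-z'_{t_{k-1}})$ times such a $2\times 2$ minor as an element of $J_{2k}(\cM_D)$, i.e.\ as a sum of $2k\times 2k$ minors of $[\cM_D]$.

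First I would set up the block matrix exactly as in the previous lemma: a $2k\times 2k$ matrix with top-left block $\cM_{IJ}$, top-right block $0$, bottom-left block $\cM'_{IJ}$, and bottom-right block obtained from $\cM'_{KL}$ by multiplying its columns by the factors $(z_{t_s}-z'_{t_s})$. Expanding this determinant gives $\prod_s (z_{t_s}-z'_{t_s}) \det(\cM_{IJ})\det(\cM'_{KL})$, which lies in $J_{2k}(\cM_D)$. For part (a), to capture a genuine $2\times 2$ minor $\det(\cM_{IJ})\det(\cM'_{KL}) - \det(\cM_{IL})\det(\cM'_{KJ})$ multiplied by $\prod_{s=1}^{k}(z_{t_s}-z'_{t_s})$, I would instead use a $2k\times 2k$ submatrix whose columns on the $\pi_1$-side form the $\cM_{IJ}$-block and whose columns on the $\pi_2$-side are a suitable $k\times k$ selection built from $(z_{t_s}-z'_{t_s})$ scalings so that the cofactor expansion along the "mixed" rows produces precisely the $2\times 2$ minor as a coefficient; here each of the $k$ diagonal factors is consumed either by a column scaling (using the Proposition \ref{P2} generators of $\cM_D$ of the form $(0, (z_t-z_t')(h_j\circ\pi_2))$) or forced by the structure of $[\cM_D]$. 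The key point is that $[\cM_D]$ already contains the columns $(0,(z_t\circ\pi_1 - z_t\circ\pi_2)(g_j\circ\pi_2))$, so premultiplying $\cM'_{KL}$-columns by distinct $(z_{t_s}-z'_{t_s})$ and keeping the top block $\cM_{IJ}$ yields a bona fide submatrix of $[\cM_D]$; antisymmetrizing over the choice $J \leftrightarrow L$ (by summing two such submatrix determinants) produces the $2\times 2$ minor times $\prod_{s=1}^k (z_{t_s}-z'_{t_s})$, and since $I_\Delta^k$ is generated by such products of $k$ diagonal generators, part (a) follows.

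For part (b), when $J_k(\cM)$ is principal, say generated by a single minor $\det(\cM_{I_0 J_0}) =: \Delta_0$, the module $(J_k(\cM))_D \subseteq \cO_{X\times X}^2$ is generated by $(\Delta_0)_D$ and the single-type diagonal elements, and one of the two doubles $\Delta_0\circ\pi_1$ or $\Delta_0\circ\pi_2$ can be "factored out" along an entire column: in the $2k\times 2k$ block construction, $\det(\cM'_{I_0 J_0})$ can be taken to be a nonzero unit-multiple of the generator, so only $k-1$ genuinely independent diagonal factors are needed in the column scalings; the $k$-th factor is absorbed because the $2\times 2$ minors of the matrix of generators of $(J_k(\cM))_D$ in the principal case already carry one factor of $(z_t - z_t')$ intrinsically (the minor $\det\left(\begin{smallmatrix} \Delta_0 & \Delta_0' \\ 0 & (z_t-z_t')\Delta_0' \end{smallmatrix}\right) = (z_t - z_t')\Delta_0\Delta_0'$). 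So $J_2((J_k(\cM))_D)$ is itself contained in $I_\Delta \cdot (\text{squares of generators})$, and multiplying by only $I_\Delta^{k-1}$ suffices to land in $J_{2k}(\cM_D)$.

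The main obstacle I anticipate is the bookkeeping in the cofactor expansion for part (a): one must verify that the off-diagonal terms in the expansion of the $2k\times 2k$ block determinant either vanish (because of the zero block) or recombine correctly after the antisymmetrization, and that the columns used are literally among the generators listed in Proposition \ref{P2}(2) (the $\cB'$ set) rather than arbitrary $\cO_{X\times X}$-combinations. I would handle this by choosing the block form so that the top-right $k\times k$ block is identically zero, which immediately forces the Laplace expansion along the first $k$ rows to pick out $\det(\cM_{IJ})$ (or $\det(\cM_{IL})$) as a factor, leaving a $k\times k$ determinant in the bottom-right variables that equals $\prod_s(z_{t_s}-z'_{t_s})\det(\cM'_{KL})$ up to the column selection; summing the two sign-opposite choices then gives the $2\times 2$ minor cleanly. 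The rank hypothesis and Theorem \ref{T2.9}/Corollary \ref{C2.10} are not needed for this lemma itself — only the explicit generator description — so the argument is purely linear-algebraic.
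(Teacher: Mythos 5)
Your proposal is essentially the paper's own proof: both parts reduce to the immediately preceding lemma, which exhibits $(z_{t_1}-z_{t_1}')\cdots(z_{t_k}-z_{t_k}')\det(\cM_{IJ})\det(\cM_{KL}')$ as a $2k\times 2k$ minor of $[\cM_D]$ via the block construction, and then (a) and (b) follow by writing out the $2\times 2$ minors of $[(J_k(\cM))_D]$ and absorbing the appropriate number of diagonal factors. One index slip worth fixing: the $2\times 2$ minor coming from two ``double'' columns of $[(J_k(\cM))_D]$ is $\det(\cM_{IJ})\det(\cM_{KL}')-\det(\cM_{KL})\det(\cM_{IJ}')$, not $\det(\cM_{IJ})\det(\cM_{KL}')-\det(\cM_{IL})\det(\cM_{KJ}')$, so the antisymmetrization you invoke should be over the interchange $(I,J)\leftrightarrow(K,L)$ rather than $J\leftrightarrow L$; the conclusion is unaffected, since each monomial term times $I_{\Delta}^{k}$ lies in $J_{2k}(\cM_D)$ by the auxiliary lemma and $J_{2k}(\cM_D)$ is an ideal.
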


\begin{proof}
	(a) We have that $$[(J_k(\cM))_D] =\left[\begin{matrix}
	
	\det(\cM_{IJ})    &  ...   &     0 \\
	\det(\cM_{IJ}')  & ... &   (z_i-z_i')\det(\cM_{KL}')\\ 	              
	\end{matrix}\right]$$
	varying the $k$-indexes $I,J,K,L$ and $i\in\{1,...,n\}$. Thus, the desired inclusion is a straightforward consequence of the previous lemma.
	
	(b) Since $J_k(\cM)$ is principal then there exist $k$-indexes $I,J$ such that $g=\det(\cM_{IJ})$ and $J_k(\cM)$ is generated by $\{g\}$.Thus we can write $$[(J_k(\cM))_D] =\left[\begin{matrix}
	
	g    &      0 \\
	g'   &   (z_i-z_i')g'\\ 	              
	\end{matrix}\right]$$
	varying $i\in\{1,...n\}$. So, in this case $J_2((J_k(\cM))_D)$ is generated by $\{g.g'(z_i-z_i')\mid i\in\{1,...,n\}\}$. By previous lemma we have that $$[(z_{t_1}-z_{t_1}')...(z_{t_{k-1}}-z_{t_{k-1}}')].[g.g'(z_i-z_i')]\in J_{2k}(\cM_D),$$
	
	for all $t_1,...,t_{k-1},i\in\{1,...,n\}$.
	
	Therefore, $I_{\Delta}^{k-1}J_2((J_k(\cM))_D)\subseteq J_{2k}(\cM_D)$ at $(x,x)$. 
\end{proof}

Now, we start to get conditions in order to obtain the second condition of Proposition \ref{T4.17}.

\begin{lemma}
	Let $\cM$ be an $\cO_{X,x}$-submodule of $\cO_{X,x}^p$ and $k\in\bN$. Denote by $[\tilde{\cM}]$ the matrix such that $$[\cM_D]=\left[\begin{matrix}
	[\cM]        &     0 \\
	[\cM]'        &    [\tilde{\cM}]
	\end{matrix}\right].$$
	
	Let $\cJ_{2k}(\cM_D)$ be the subideal of $J_{2k}(\cM_D)$ generated by $$\{\det(\cM_{IJ})\det(\tilde{\cM}_{KL})\mid I,J,K,L\mbox { are }k\mbox{-indexes}\}.$$
	
	Then, $$\cJ_{2k}(\cM_D)\subseteq I_{\Delta}^{k-1}J_2((J_k(\cM))_D)\mbox { at }(x,x) .$$
\end{lemma}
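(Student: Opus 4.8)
The plan is to check that each generator $\det(\cM_{IJ})\det(\tilde\cM_{KL})$ of $\cJ_{2k}(\cM_D)$ lies in $I_\Delta^{k-1}J_2((J_k(\cM))_D)$, by making the block $[\tilde\cM]$ explicit and factoring out the scalars coming from the diagonal.

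First I would fix generators $\{g_1,\dots,g_r\}$ of $\cM$ and write $g_{ab}$ for the $a$-th coordinate of $g_b$. By Proposition \ref{P2}, $\cM_D$ is generated by the columns $(g_j)_D=(g_j,g_j')$ together with the columns $(0,(z_i-z_i')g_j')$ for $i\in\{1,\dots,n\}$ and $j\in\{1,\dots,r\}$; comparing with the displayed block form of $[\cM_D]$, the columns of $[\tilde\cM]$ are precisely the vectors $(z_i-z_i')g_j'$, indexed by the pairs $(i,j)\in\{1,\dots,n\}\times\{1,\dots,r\}$.

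The main step is a determinant expansion. Fix $k$-indexes $I,J$ of $[\cM]$, a $k$-index $K\subseteq\{1,\dots,p\}$ of rows, and a $k$-index $L=((i_1,j_1),\dots,(i_k,j_k))$ of columns of $[\tilde\cM]$ (so the pairs $(i_t,j_t)$ are pairwise distinct). The $(s,t)$ entry of $\tilde\cM_{KL}$ is $(z_{i_t}-z_{i_t}')\,g'_{k_s j_t}$, so pulling the scalar $(z_{i_t}-z_{i_t}')$ out of the $t$-th column gives
$$\det(\tilde\cM_{KL})=\Big(\prod_{t=1}^{k}(z_{i_t}-z_{i_t}')\Big)\det\big(g'_{k_s j_t}\big)_{1\le s,t\le k}.$$
If two of the indices $j_1,\dots,j_k$ coincide, the residual matrix has two equal columns and the determinant vanishes; otherwise, letting $J''$ be the increasing rearrangement of $(j_1,\dots,j_k)$, the residual matrix is $\cM'_{KJ''}$ up to a column permutation, so the residual determinant equals $\pm\det(\cM'_{KJ''})$. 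Hence each generator of $\cJ_{2k}(\cM_D)$ is, up to sign, either $0$ (trivially in the target ideal) or $\big(\prod_{t=1}^{k}(z_{i_t}-z_{i_t}')\big)\det(\cM_{IJ})\det(\cM'_{KJ''})$.

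Finally I would recognize both factors. From the description of $[(J_k(\cM))_D]$ used in the proof of Lemma \ref{L4.20}, the $2\times2$ minor of $[(J_k(\cM))_D]$ formed by the column $(\det(\cM_{IJ}),\det(\cM'_{IJ}))$ and the column $(0,(z_{i_1}-z_{i_1}')\det(\cM'_{KJ''}))$ equals exactly $(z_{i_1}-z_{i_1}')\det(\cM_{IJ})\det(\cM'_{KJ''})$, which therefore lies in $J_2((J_k(\cM))_D)$; the remaining factor $\prod_{t=2}^{k}(z_{i_t}-z_{i_t}')$ is a product of $k-1$ generators of $I_\Delta$, hence lies in $I_\Delta^{k-1}$. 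Multiplying shows $\det(\cM_{IJ})\det(\tilde\cM_{KL})\in I_\Delta^{k-1}J_2((J_k(\cM))_D)$, and since these elements generate $\cJ_{2k}(\cM_D)$ the inclusion follows. The only delicate point is the bookkeeping with the double index $(i,j)$ on the columns of $[\tilde\cM]$ and the separation of the degenerate case where the generator indices $j_t$ repeat; once the diagonal scalars are extracted the rest is a formal determinant computation, so I do not expect a genuine obstacle.
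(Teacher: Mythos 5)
Your proof is correct and follows essentially the same approach as the paper: factor the diagonal scalars $(z_{i_t}-z_{i_t}')$ out of the columns of $\tilde\cM_{KL}$, discard the degenerate case of repeated columns, and split the product into one factor absorbed as a $2\times 2$ minor of $[(J_k(\cM))_D]$ and the remaining $k-1$ factors placed in $I_\Delta^{k-1}$. You have merely made the double-index bookkeeping on the columns of $[\tilde\cM]$ explicit where the paper leaves it implicit.
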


\begin{proof}
	It suffices to prove that each generator of $\cJ_{2k}(\cM_D)$ belongs to $I_{\Delta}^{k-1}J_2((J_k(\cM))_D)$.
	
	The columns of $\tilde{\cM}_{KL}$ are columns of $[\cM]'$ possibly in a different order with terms of type $z_i-z_i'$ multiplied on each column. If the $k$-indexes $K,L$ pick repeated columns then $\det(\tilde{\cM}_{KL})=0$ and we are done. If this does not occur, then 
	$$\det(\tilde{\cM}_{KL})=(z_{i_1}-z_{i_1}')...(z_{i_k}-z_{i_k}')(\pm \det(\cM'_{K'L'}))$$ for some reorganization $k$-indexes $K',L'$, where $i_1,...,i_k\in\{1,...,n\}$ are the indexes which $z_{i_1}-z_{i_1}',...,z_{i_k}-z_{i_k}'$ appear on each of the $k$ columns of $\tilde{\cM}_{KL}$. Since $\pm(z_{i_1}-z_{i_1}')...(z_{i_{k-1}}-z_{i_{k-1}}')\in I_{\Delta}^{k-1}$ and \\$\det(\cM_{IJ})(z_{i_k}-z_{i_k}')\det(\cM'_{KL})\in J_2((J_k(\cM))_D)$ then $$\det(\cM_{IJ})\det(\tilde{\cM}_{KL})\in I_{\Delta}^{k-1}J_2((J_k(\cM))_D).$$
\end{proof}

The next result ensures that any free submodule satisfies the second condition of Proposition \ref{T4.17} for a suitable power of the ideal coming from the diagonal.

\begin{lemma}
	Let $\cM$ be a free $\cO_{X,x}$-submodule of $\cO_{X,x}^p$ of rank $k$. Then $$J_{2k}(\cM_D)\subseteq I_{\Delta}^{k-1}J_2((J_k(\cM))_D)\mbox{ at }(x,x).$$
\end{lemma}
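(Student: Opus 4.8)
The statement to prove is: for a free $\cO_{X,x}$-submodule $\cM$ of $\cO_{X,x}^p$ of rank $k$, we have $J_{2k}(\cM_D)\subseteq I_{\Delta}^{k-1}J_2((J_k(\cM))_D)$ at $(x,x)$. Since the reverse-type inclusion $\cJ_{2k}(\cM_D)\subseteq I_{\Delta}^{k-1}J_2((J_k(\cM))_D)$ was just established in the previous lemma, the plan is to show that in the free case the full ideal $J_{2k}(\cM_D)$ coincides with (or is contained in) the subideal $\cJ_{2k}(\cM_D)$ generated by products $\det(\cM_{IJ})\det(\tilde\cM_{KL})$, possibly after absorbing some factors of $z_i-z_i'$ into $I_\Delta^{k-1}$. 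So the first step is to fix a free basis $g_1,\ldots,g_k$ of $\cM$ and write the matrix of generators of $\cM_D$ explicitly. Using the generating set $\cB$ from Proposition \ref{P2}, one obtains a block form $[\cM_D]=\begin{bmatrix}[\cM] & 0\\ [\cM]' & [\tilde\cM]\end{bmatrix}$, where now $[\cM]$ has exactly $k$ columns (the basis) and $[\tilde\cM]$ has columns of the form $(z_i-z_i')(g_j\circ\pi_2)$.

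The heart of the argument is a Laplace (generalized cofactor) expansion of an arbitrary $2k\times 2k$ minor of $[\cM_D]$ along the first $k$ columns — i.e.\ splitting the row index set of a $2k\times 2k$ submatrix into the rows coming from the "top $n$" block (those involving $[\cM]$) and the rows from the "bottom" block (those involving $[\cM]'$ and $[\tilde\cM]$). Because the top-right block is zero, only those row-subsets that assign exactly the right number of rows to each block survive: a generic $2k\times 2k$ minor of $[\cM_D]$ expands as a sum of products $\pm\det(\cM_{IJ})\det(\tilde\cM_{KL})$, where $I$ indexes rows chosen in the first coordinate copy, $J$ indexes a size-$k$ subset of the chosen columns among the first $k$, $K$ indexes rows in the second copy and $L$ the remaining columns. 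This exhibits each generator of $J_{2k}(\cM_D)$ as lying in $\cJ_{2k}(\cM_D)$ — one must be slightly careful because the chosen $2k$ columns may include fewer than $k$ of the first $k$ columns, but then the top block is rank-deficient and the minor vanishes, while if it includes more than $k$ of them the same forces a zero; so exactly $k$ of the first-$k$ columns and $k$ of the $\tilde\cM$-columns are chosen, which is precisely the shape handled by the previous lemma. Hence $J_{2k}(\cM_D)=\cJ_{2k}(\cM_D)\subseteq I_\Delta^{k-1}J_2((J_k(\cM))_D)$, as desired.

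The main obstacle I anticipate is purely bookkeeping: tracking signs and the precise column partition in the Laplace expansion, and verifying that the "mixed" terms one might naively worry about (minors using some columns from the $[\cM]$-block in the bottom rows, or repeated columns in $[\tilde\cM]$) genuinely vanish or reduce to the standard shape. No deep new idea is needed beyond the block structure plus the previous lemma; the work is to justify that the Laplace expansion over the zero top-right block leaves only the $\det(\cM_{IJ})\det(\tilde\cM_{KL})$ terms. Once that is in place the inclusion into $I_\Delta^{k-1}J_2((J_k(\cM))_D)$ is immediate from the lemma preceding this one.
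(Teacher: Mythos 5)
Your plan follows the paper's proof: both exploit the block lower-triangular form of $[\cM_D]$ in the free case, show each $2k\times 2k$ minor either vanishes or factors as $\det(\cM_{IJ})\det(\tilde{\cM}_{KL})$, and then invoke the preceding lemma. One point that both you and the paper leave implicit and that you will need to spell out when carrying this through: the vanishing of the top-right block only rules out taking more than $k$ of the $2k$ rows from the $\pi_1$-copy $\{1,\ldots,p\}$; to rule out taking more than $k$ rows from the $\pi_2$-copy $\{p+1,\ldots,2p\}$ --- without which you would be left with ``mixed'' $k\times k$ left minors drawing some rows from $[\cM]$ and some from $[\cM]'$, which do not visibly lie in $I_{\Delta}^{k-1}J_2((J_k(\cM))_D)$ --- you also need the separate observation that the entire lower half of $[\cM_D]$ has column rank at most $k$, since every column of $[\tilde{\cM}]$ is a multiple $(z_i-z_i')$ of a column of $[\cM]'$, so choosing more than $k$ rows from the bottom would make the rows of $N$ linearly dependent and the minor zero.
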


\begin{proof}
 	We have that $$[\cM_D]=\left[
 	\begin{matrix}
 	[\cM]_{p\times k}   &    |    &     0   \\
 	    ------        &    |    &  -------  \\
 	[\cM]'_{p\times k} & |    &  (z_i-z_i')[\cM]'
 	
 	\end{matrix}
 	\right] $$
 	varying $i\in\{1,...,n\}$.
 	
 	Let $d\in J_{2k}(\cM_D)$ at $(x,x)$ be an arbitrary generator. Then $d=\det N$ where $N$ is a $2k\times 2k$ submatrix of $[\cM_D]$.
 	
 	(i) Suppose first that there are $k+t$ columns of $N$ taken on the part $$\left[\begin{matrix}
 	0 \\
 	(z_i-z_i')[\cM]'
 	\end{matrix}\right]$$
 	of $[\cM_D]$, with $1\leq t \leq k$. Then we can write $$N=\left[\begin{matrix}
 	(\cM_{IJ})_{k\times(k-t)}    &    |    &   0_{k\times({k+t})}\\
 	--------         &    |    &   --------\\
 	(\cM_{IJ})'_{k\times(k-t)}   &    |    &  (\tilde{\cM})_{KL}
 	\end{matrix}\right]$$
 	
 	$$=\left[\begin{matrix}
 	\cM_{IJ}   &    0_{k\times t}  &   |  &   0_{k\times k}\\
 	---    & ---        &   |  &  ------\\
 	\cM_{IJ}'  &     *             &   |  &    **
 	\end{matrix}\right]$$ for some matrices * and **, where ** is a square matrix of size $k\times k$. So in this case we have that $d=\det N=\det\left[\begin{matrix}
 	\cM_{IJ}  &   0_{k\times t}
 	\end{matrix}\right].\det(**)=0\in I_{\Delta}^{k-1}J_2((J_k(\cM))_D)$.  	
 	
 	(ii) Now suppose that we have exactly $k$ columns of $N$ taken on the part $$\left[\begin{matrix}
 	0 \\
 	(z_i-z_i')[\cM]'
 	\end{matrix}\right]$$
 	of $[\cM_D]$. Then we can write $$N=\left[\begin{matrix}
 	\cM_{IJ}    &    |      &     0_{k\times k}  \\
 	------    &    |      &     ------ \\
 	\cM_{IJ}'   &    |      &     \tilde{\cM}_{KL}
 	\end{matrix}\right].$$
 	
 Thus, $d=\det(\cM_{IJ})\det(\tilde{\cM}_{KL})\in\cJ_{2k}(\cM_D)$ and by previous lemma we conclude that $d\in I_{\Delta}^{k-1}J_2((J_k(\cM))_D)$.
\end{proof}

As a consequence of the previous lemma, we have a result which states the second condition of Proposition \ref{T4.17} holds locally in a dense Zariski open subset of $X$ in a sheaf of submodules of $\cOXp$.

\begin{lemma}\label{L4.23}
	Let $\cM$ be an $\cO_X$-submodule of $\cOXp$ of generic rank $k$ on each component of $X$. Then there exists a dense Zariski open subset $U$ of $X$ such that $U\cap V$ is a dense Zariski open subset of $V$, $\forall V$ component of $X$, $J_k(\cM)=\cO_{X,x}$ and $$J_{2k}(\cM_D)\subseteq I_{\Delta}^{k-1}J_2((J_k(\cM))_D)\mbox{ at }(x,x),$$ for all $x\in U$.
\end{lemma}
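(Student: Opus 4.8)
The plan is to reduce this statement to the previous lemma, which establishes the inclusion $J_{2k}(\cM_D)\subseteq I_{\Delta}^{k-1}J_2((J_k(\cM))_D)$ for any \emph{free} submodule of rank $k$. The key observation is that a coherent sheaf of submodules of generic rank $k$ is locally free on a dense Zariski open subset of $X$; the point is to arrange this open set so that its trace on every irreducible component is again dense and Zariski open in that component, and so that $J_k(\cM)$ is the full structure sheaf there (i.e.\ some $k\times k$ minor of a matrix of generators is a unit).

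First I would fix a finite matrix of generators $[\cM]$ of $\cM$. The ideal $J_k(\cM)$ is coherent, so its zero locus $V(J_k(\cM))$ is a closed analytic subset of $X$. On each irreducible component $V$ of $X$ the generic rank of $\cM$ is $k$, so $J_k(\cM)|_V$ is not identically zero on $V$, hence $V(J_k(\cM))\cap V$ is a proper analytic subset of $V$, of positive codimension. Likewise $J_{k+1}(\cM)|_V$ vanishes identically on $V$ (the generic rank being exactly $k$), but this is automatic and costs nothing. Take $U := X\setminus\bigl(V(J_k(\cM))\cup X_{\mathrm{sing}}\cup\bigcup_{i\neq j}(V_i\cap V_j)\bigr)$, the complement of the non-generic locus together with the singular locus and the pairwise intersections of components; then $U$ is Zariski open, $U\cap V$ is dense Zariski open in each component $V$, and on $U$ we have $J_k(\cM)=\cO_{X}$ because some $k\times k$ minor of $[\cM]$ is invertible there.

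Next I would check that $\cM$ is free of rank $k$ at every $x\in U$. Since on $U$ a fixed $k\times k$ minor of $[\cM]$ is a unit, the corresponding $k$ columns $g_{i_1},\dots,g_{i_k}$ of $[\cM]$ are part of a basis of $\cO_{X,x}^p$ after a unimodular change of coordinates, and the remaining columns lie in their span modulo the relations forced by $J_{k+1}(\cM)=0$; more carefully, Fitting-ideal considerations (or the fact that a finitely generated module over a local ring whose $k$-th Fitting ideal is the whole ring and whose $(k+1)$-st Fitting ideal is zero is free of rank $k$) give that $\cM_x$ is free of rank $k$. Once $\cM_x$ is free of rank $k$, the previous lemma applies verbatim at $(x,x)$ and yields $J_{2k}(\cM_D)\subseteq I_{\Delta}^{k-1}J_2((J_k(\cM))_D)$ there. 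Assembling over all $x\in U$ gives the statement.

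The main obstacle, I expect, is the bookkeeping in the claim that $\cM$ is actually \emph{free} (not merely locally generated by $k$ elements) on $U$: one must be careful that killing the higher minors is enough to conclude freeness and that this holds in the analytic local ring, not just generically. This is where invoking Fitting ideals, or citing a standard local-freeness criterion, does the real work; everything else is the elementary topology of carving out the good open set and then quoting the preceding lemma. In the family case the same argument runs relative to $Y$, using the relative double and the relative version of $J_k(\cM)$, with $U$ chosen inside $X$ so that each fibre behaves generically.
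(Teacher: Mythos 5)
Your proposal is correct and follows essentially the same strategy as the paper: carve out a dense Zariski open $U$ where a $k\times k$ minor of $[\cM]$ is a unit (so $J_k(\cM)=\cO_{X,x}$), argue that $\cM_x$ is then free of rank $k$, and quote the preceding lemma for free modules. The only real difference is the mechanism for deducing freeness: the paper writes down $k$ columns generating $\cM$ on $U$ via Cramer's rule, while you invoke the Fitting-ideal criterion for local freeness; these are morally the same argument (the Fitting ideals you need are those of the cokernel $\cO_X^p/\cM$ rather than of $\cM$ itself, so the indexing in your parenthetical is slightly off, but the conclusion is correct). You are a bit more explicit than the paper in removing $X_{\mathrm{sing}}$ and pairwise intersections of components from $U$, which is useful, since one needs $\cO_{X,x}$ reduced (in fact a domain suffices) to pass from ``generic rank $k$ on each component'' to $J_{k+1}(\cM)_x=0$ and hence to freeness; the paper leaves this implicit. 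Nothing is missing.
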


\begin{proof}
	Consider $[\cM]$ a matrix of generators of $\cM$. Using the Cramer's rule, we can choose $k$ $\cO_X$-linear independents columns of $[\cM]$ such that these columns generates $\cM$ in a such dense Zariski open subset $U$ of $X$ and $J_k(\cM)=\cO_X$ along $U$. Let $\cM_k$ be the $\cO_X$-submodule of $\cOXp$ generated by the columns chosen above. Thus, given $x\in U$, we have that $\cM_x=(\cM_k)_x$ is a free $\cO_{X,x}$-submodule of $\cO_{X,x}^p$ of rank $k$ and the desired inclusion is a consequence of the previous lemma.  
\end{proof}

Before we state the main theorem of this section, we prove that the generic rank of $(h,\cM)$ is the same as the generic rank of $\cM$, provided $h\in\overline{\cM}$.

\begin{lemma}
	Let $\cM$ be an $\cO_{X,x}$-submodule of $\cO_{X,x}^p$ of generic rank $k$ on each component of $X$. If $h\in\overline{\cM}$ then $(h,\cM)$ also has generic rank $k$ on each component of $X$.
\end{lemma}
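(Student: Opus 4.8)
The plan is to show that adjoining an element $h$ that is already integral over $\cM$ cannot raise the generic rank, while the obvious inclusion $\cM \subseteq (h,\cM)$ shows it cannot lower it. Since generic rank is computed component by component, I would fix an irreducible component $V$ of $(X,x)$ and work on $V$; it suffices to prove $(h,\cM)$ has generic rank $k$ on $V$. The generic rank of a module is the rank of a matrix of generators at a generic point, equivalently (by Proposition \ref{proposition G} and the minors description) the largest $j$ for which $J_j(\cM) \neq 0$ on $V$; so I want to show $J_{k+1}((h,\cM)) = 0$ generically on $V$, i.e. $J_{k+1}((h,\cM)) \subseteq \overline{0}$ on $V$, knowing $J_{k+1}(\cM) = 0$ there because $\cM$ has generic rank $k$.

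The key step uses Proposition \ref{proposition G} itself, applied with the submodule $\cM$ and the element $h$: since $h \in \overline{\cM}$ at $x$, restricting to $V$ gives $h \in \overline{\cM_V}$ at $x$, and Proposition \ref{proposition G} then says $J_{k_V}((h,\cM_V)) \subseteq \overline{J_{k_V}(\cM_V)}$, where $k_V$ is by definition the generic rank of $(h,\cM_V)$ on $V$. So I need to rule out $k_V = k+1$ (it is automatically $\geq k$ and $\leq k+1$ since $(h,\cM)$ has one more generator than a rank-$k$ module can use to add rank). Suppose $k_V = k+1$. Then Proposition \ref{proposition G} gives $J_{k+1}((h,\cM_V)) \subseteq \overline{J_{k+1}(\cM_V)} = \overline{0} = 0$ on $V$ (as $\cM_V$ has generic rank $k$). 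But $J_{k+1}((h,\cM_V)) = 0$ on $V$ means the generic rank of $(h,\cM_V)$ on $V$ is at most $k$, contradicting $k_V = k+1$. Hence $k_V = k$, which is exactly the claim.

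An alternative, more hands-on route avoids invoking Proposition \ref{proposition G} and instead uses the curve criterion directly: at a generic point $y \in V$, $h_y \in \overline{\cM_y}$, so along a generic parametrized curve $\phi$ through $y$, $h \circ \phi$ lies in $\phi^*(\cM)$ as a module over $\cO_{\bC,0}$; choosing $\phi$ so that $\phi^*(\cM)$ already attains the generic rank $k$, one sees that $h \circ \phi$ is an $\cO_{\bC,0}$-combination of the generators of $\cM$, hence adding $h$ does not enlarge the rank of the pullback, and since generic points fill a Zariski-dense set this forces the generic rank of $(h,\cM)$ to be $k$. I expect the main (very mild) obstacle to be purely bookkeeping: making sure the reduction to a single component is clean, that "generic rank $k$" is interpreted as the rank at a Zariski-generic point of that component, and that the curve chosen is generic enough that its pullback module realizes the full generic rank — all of which are standard in this setting.
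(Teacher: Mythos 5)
Both of your routes are correct. Your second, curve-criterion route is essentially the paper's own proof: the paper notes that $h\in\overline{\cM_{x'}}$ for all $x'$ in a neighborhood of $x$, assumes the generic rank of $(h,\cM)$ were $k+1$, picks a generic $x'$ where both ranks are attained so that $h(x')\notin\cM(x')$ (the fiber span), and then uses the \emph{constant} curve at $x'$ to contradict $h\in\overline{\cM_{x'}}$. So the non-constant generic curve and the final ``generic points fill a Zariski-dense set'' step in your write-up are unnecessary detours; the constant-curve evaluation at nearby points directly pins down the rank of $(h,\cM)$ pointwise. Your first route, via Proposition~\ref{proposition G}, is a genuine alternative: it trades the elementary pointwise fiber argument for the minors criterion $J_{k_V}((h,\cM_V))\subseteq\overline{J_{k_V}(\cM_V)}$, combined with the fact that $J_{k+1}(\cM_V)=0$ on the reduced irreducible component $V$, and then derives the contradiction from $J_{k+1}((h,\cM_V))=0$. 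That works, at the cost of invoking a heavier tool; the paper's version is the more economical one.
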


\begin{proof}
Since $h\in\overline{\cM_x}$ then $h\in\overline{\cM_{x'}}$, for all $x'$ in an open neighborhood $U$ of $x$ in $X$. Suppose the rank of $(h,\cM)$ is $k+1$. Then $h(x')\notin \cM(x')$. But, $x'$ is a constant curve, so $h\notin\overline{\cM_{x'}}$, contradiction.
\end{proof}

The next theorem gives us the desired equivalence.

\begin{theorem}
	Let $\cM$ be an $\cO_X$-submodule of $\cO_X^p$ of generic rank $k$ on each component of $X$. Then there exists a dense Zariski open subset $U$ of $X$ 
	 such that	$$ \cM_{S_1}=\cM_{S_2}=\cM_{S_3}$$ along $U$. 
\end{theorem}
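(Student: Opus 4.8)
The plan is to combine Proposition \ref{T4.17} with the sequence of lemmas established just above. Recall that Proposition \ref{T4.17} shows that at a point $x$ where there exists an ideal $I$ of $\cO_{X\times X,(x,x)}^{2p}$ satisfying the two inclusions
$$IJ_2((J_k(\cM))_D)\subseteq\overline{J_{2k}(\cM_D)}\quad\text{and}\quad J_{2k}(h_D,\cM_D)\subseteq\overline{IJ_2((J_k(h,\cM))_D)},$$
the three saturations coincide at $x$ for that particular $h$. So the whole game is to produce, on a dense Zariski open set $U$, a single ideal $I$ (independent of $h$) that works. The natural candidate, suggested by the lemmas, is a power $I_{\Delta}^{m}$ of the ideal $I_{\Delta}=I(\Delta(X))$ generated by the $z_i\circ\pi_1-z_i\circ\pi_2$.

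First I would invoke Lemma \ref{L4.23} to get a dense Zariski open $U\subseteq X$ which meets every component of $X$ in a dense Zariski open subset, on which $J_k(\cM)=\cO_{X,x}$, and on which $J_{2k}(\cM_D)\subseteq I_{\Delta}^{k-1}J_2((J_k(\cM))_D)$. Fix $x\in U$ and take $I=I_{\Delta}^{k-1}$. The first inclusion required by Proposition \ref{T4.17} is then exactly Lemma \ref{L4.20}(b): since $J_k(\cM)=\cO_{X,x}$ is in particular principal, $I_{\Delta}^{k-1}J_2((J_k(\cM))_D)\subseteq J_{2k}(\cM_D)\subseteq\overline{J_{2k}(\cM_D)}$. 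For the second inclusion I need $J_{2k}(h_D,\cM_D)\subseteq\overline{I_{\Delta}^{k-1}J_2((J_k(h,\cM))_D)}$, and here I would split according to whether $h\in\overline{\cM}$. In the cases of interest ($h$ in one of the saturations) we have $h\in\overline{\cM}$ by Propositions \ref{P15}, \ref{P19}, or the analogous statement for $\cM_{S_3}$, hence by the last lemma $(h,\cM)$ still has generic rank $k$ on each component, and in particular $J_k(h,\cM)$ is generated at $x$ by units together with $J_k(\cM)=\cO_{X,x}$, so $J_k(h,\cM)=\cO_{X,x}$ too; then repeating the argument of Lemma \ref{L4.23} with $(h,\cM)$ in place of $\cM$ — its stalk at $x\in U$ is free of rank $k$ after choosing $k$ independent columns — gives $J_{2k}((h,\cM)_D)\subseteq I_{\Delta}^{k-1}J_2((J_k(h,\cM))_D)$, and since $J_{2k}(h_D,\cM_D)\subseteq J_{2k}((h,\cM)_D)$ (the double of $(h,\cM)$ contains $h_D$ and $\cM_D$ among its generators), the required inclusion follows, even without passing to integral closures.

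With both hypotheses of Proposition \ref{T4.17} verified at every $x\in U$ and for every relevant $h$, I conclude $h\in(\cM_{S_1})_x\Leftrightarrow h\in(\cM_{S_2})_x\Leftrightarrow h\in(\cM_{S_3})_x$; since this holds for all $h\in\cO_{X,x}^p$ (the implications in Proposition \ref{T4.17} that do not need $h\in\overline{\cM}$ are the trivial inclusions $\cM_{S_1}\subseteq\cM_{S_2}\subseteq\cM_{S_3}$, and the reverse containment for a given $h$ only ever gets used when that $h$ already lies in $\cM_{S_3}\subseteq\overline{\cM}$), we obtain $\cM_{S_1}=\cM_{S_2}=\cM_{S_3}$ as sheaves of $\cO_X$-submodules along $U$. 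Shrinking $U$ if necessary so that it is also contained in the locus where $\cM$ itself is well-behaved causes no harm.

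I expect the main obstacle to be bookkeeping rather than a deep difficulty: one must check carefully that the ``free of rank $k$'' reduction of Lemma \ref{L4.23} can be applied simultaneously to $\cM$ and to $(h,\cM)$ on a common dense Zariski open set — note that the lemma's $U$ is chosen from a matrix of generators of $\cM$ alone, so I must either re-run the selection for $(h,\cM)$ and intersect the two open sets, or observe that once $J_k(\cM)=\cO$ the extra generator $h$ lies in the $\cO_X$-span of the chosen $k$ columns and so does not change the module at $x\in U$. The second route is cleaner and is exactly what the penultimate lemma (generic rank of $(h,\cM)$ equals that of $\cM$ when $h\in\overline{\cM}$) is there to license. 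The other point needing a line of care is that the ideal $I=I_{\Delta}^{k-1}$ produced is genuinely independent of $h$, which is immediate from the construction but worth stating explicitly so the application of Proposition \ref{T4.17} is clean.
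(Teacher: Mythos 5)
Your proposal is correct, and it differs from the paper's argument in an interesting way. The paper's proof invokes noetherianity to obtain finitely many global generators $h_1,\dots,h_r$ of $\cM_{S_3}$, applies Lemma~\ref{L4.23} separately to $\cM$ (giving $U_0$) and to each $(h_i,\cM)$ (giving $U_i$), and intersects the $r+1$ resulting dense Zariski opens. You instead produce a \emph{single} $U$ from Lemma~\ref{L4.23} applied to $\cM$ alone, and argue that the second hypothesis of Proposition~\ref{T4.17} holds at every $x\in U$ for every relevant $h$ simultaneously, by observing that on $U$ the extra column $h$ does not enlarge the module: if $h\in\overline{\cM}_x$ then $(h,\cM)$ has generic rank $k$, and since $J_k(\cM)=\cO_{X,x}$ some $k\times k$ minor $\det(\cM_{IJ})$ is a unit, so the vanishing of all $(k{+}1)\times(k{+}1)$ minors of $[h,\cM]$ together with Cramer's rule forces $h$ into the $\cO_{X,x}$-span of the $k$ chosen columns, i.e.\ $(h,\cM)_x=\cM_x$. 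That observation is correct and makes the noetherian/global-generators step unnecessary, which is a genuine simplification. It is worth noting, though, that your own observation actually short-circuits more of the argument than you exploit: once you know $h\in\cM_x$ for every $h\in\overline{\cM}_x$ with $x\in U$, you have $\overline{\cM}_x=\cM_x$ on $U$, and then $\cM\subseteq\cM_{S_i}\subseteq\overline{\cM}=\cM$ gives $\cM_{S_1}=\cM_{S_2}=\cM_{S_3}=\cM$ along $U$ directly, without invoking Proposition~\ref{T4.17} at all. So your route through Proposition~\ref{T4.17} is sound but a detour; the paper's route, by contrast, avoids relying on the free-locus collapse $\overline{\cM}=\cM$ and therefore illustrates the general mechanism of Proposition~\ref{T4.17} more faithfully.
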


\begin{proof}
	Since $X\subseteq\bC^n$ is an analytic complex variety then $\cO_X$ is a noetherian sheaf of rings, hence $\cO_X^p$ is a noetherian sheaf of $\cO_X$-modules. Since $\cM_{S_3}$ is a sheaf of $\cO_X$-modules then $\cM_{S_3}$ is finitely generated by global sections $h_1,...,h_r$. Since $h_i\in\overline{\cM}$, $\forall i\in\{1,...,r\}$ then by previous lemma $(h_i,\cM)$ also has generic rank $k$ on each component of $X$, $\forall i\in\{1,...,r\}$. By Lemma \ref{L4.23}, for each $i\in\{1,...,r\}$ there exists a dense Zariski open subset $U_i$ of $X$ such that $U_i\cap V$ is a dense Zariski open subset of $V$, for all $V$ component of $X$, and $$J_{2k}((h_i)_D,\cM_D)\subseteq I_{\Delta}^{k-1}J_2((J_k(h_i,\cM))_D)\mbox{ at }(x,x),$$ $\forall x\in U_i$.
	
	Also by Lemma \ref{L4.23} there exists a dense Zariski open subset $U_0$ of $X$ such that $U_0\cap V$ is a dense Zariski open subset of $V$, for all component $V$ of $X$, and $J_k(\cM)=\cO_{X,x}$ which is principal at $x$, $\forall x\in U_0$. From Lemma \ref{L4.20}(b), we conclude that $$I_{\Delta}^{k-1}J_2((J_k(\cM))_D)\subseteq J_{2k}(\cM_D)\mbox{ at }(x,x),$$
	$\forall x\in U_0$. 
	
	Take $U:=\bigcap_{j=0}^r U_j$. Then $U$ is a dense Zariski open subset of $X$ and $U\cap V$ is a dense Zariski open subset of $V$, for all $V$ component of $X$.
	
	We already know that $\cM_{S_1}\subseteq \cM_{S_2}\subseteq \cM_{S_3}$ at any point of $X$, in particular, along $U$. Let us prove that $\cM_{S_3}\subseteq \cM_{S_1}$ along $U$. In fact, let $x\in U$. Given an arbitrary $i\in\{1,...,r\}$, since $x\in U_0$ and $x\in U_i$ then: 
	\begin{itemize}
		\item $I_{\Delta}^{k-1}J_2((J_k(\cM))_D)\subseteq J_{2k}(\cM_D)\mbox{ at }(x,x)$;
		
		\item 	$J_{2k}((h_i)_D,\cM_D)\subseteq I_{\Delta}^{k-1}J_2((J_k(h_i,\cM))_D)\mbox{ at }(x,x)$. 
	\end{itemize} 
	
	Since $h_i\in \cM_{S_3}$ at $x$ then by Proposition \ref{T4.17} we have that $h_i \in \cM_{S_1}$ at $x$, $\forall i\in\{1,...,r\}$. Since $\cM_{S_3}$ at $x$ is generated by $h_1,...,h_r$ at $x$, then $\cM_{S_3}\subseteq \cM_{S_1}$ at $x$.
\end{proof}

In the next example we show that in general $\cM_{S_3}$ is not a submodule of $\cM_{S_1}$.

\begin{example}
	Consider the submodule $\cM$ of $\cO_2^2$ given by \\$[\cM]=\left[\begin{matrix}
	x  &  0  &   y\\
	y  &  x  &   0
	\end{matrix}\right]$ and let $h=(x,3y)$. 
	
	Thus, $J_2(\cM)=\langle  x^2,xy,y^2\rangle=J_2(h,\cM)$. In particular, $h\in\cM_{S_3}$.
	
	Consider the curve $\Phi:(\bC,0)\rightarrow(\bC^2\times\bC^2,(0,0))$ given by $\Phi(t)=(t,\alpha t, t, \beta t)$.
	
	We have that $$[\Phi^*(\cM_D)]=\left[\begin{matrix}
	t & 0 & \alpha t & 0 & 0 & 0 \\
	\alpha t & t & 0 &  0 & 0 & 0\\
	t & 0 & \beta t & (\alpha-\beta)t^2 & 0 & \beta(\alpha-\beta)t^2\\
	\beta t & t & 0 & \beta(\alpha-\beta)t^2 & (\alpha-\beta)t^2 & 0
	\end{matrix}\right]$$
	
	\noindent and $[\Phi^*(h_D)]=\left[\begin{matrix}
	t\\
	3\alpha t \\
	t\\
	3\beta t
	\end{matrix}\right]$. Let $D$ be the first $4\times 3$ submatrix of $[\Phi^*(M_D)]$. The complementar submatrix has degree 2, so we can ignore its terms. Notice that:
	
	$[\Phi^*(h_D)]-D\cdot e_1=\left[\begin{matrix}
	0\\
	2\alpha t \\
	0\\
	2\beta t
	\end{matrix}\right]$ and $[\Phi^*(h_D)]-D\cdot (2\alpha e_2)=\left[\begin{matrix}
	0\\
	0 \\
	0\\
	(2\beta-2\alpha) t
	\end{matrix}\right]$. However, the first $3$ vectors of $D$ generically have rank $3$. Therefore, $h\notin \cM_{S_1}$.
\end{example}

\section{The Necessity Theorem}\label{sec4}

In this section we prove the Necessity theorem, and we relate the infinitesimal Lipschitz condition coming from the first Lipschitz saturation with the strongly bi-Lipschitz equisingularity and the Mostowski's conditions for a Lipschitz stratification.

Let $X\subseteq\bC^n$ be an analytic variety. Let $\cL_X$ be the sheaf of locally Lipschitz functions on $X$. Clearly $\cO_X$ is a subsheaf of $\cL_X$. 

\begin{definition}
	Let $\cM\subseteq\cOXp$ be an $\cO_X$-submodule. We define $\cM_{\cL}$ as the $\cO_X$-submodule of $\cO_X^p$ given by $\cM_{\cL}:=\cL_X\cM\cap\cOXp$. The module $\cM_{\cL}$ is called \textbf{strong Lipschitz saturation of $\mathbf{\cM}$}.
\end{definition}

\begin{theorem}[Necessity Theorem]\label{T2.1}
	Let $\cM\subseteq\cO_{X}^p$ be an $\cO_X$-submodule of $\cOXp$ generated by global sections $\{h_1,...,h_r\}$, $x\in X$. If $h\in \cM_{\cL}$ at $x$ then $h_D\in \overline{\cM_D}$ at $(x,x')$, for any $x'\in X$. In particular, $\cM_{\cL}\subseteq\cM_{S_1}$.
\end{theorem}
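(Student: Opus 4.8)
The plan is to write $h_D$ explicitly as an $\cO_{X\times X}$-combination of the generators of $\cM_D$ described in Proposition \ref{P2} whose coefficient functions are bounded (in fact Lipschitz) near the diagonal, and then to promote this to membership in $\overline{\cM_D}$ by passing to the associated ideal $\rho(\cM_D)$, where a sup-norm (\L ojasiewicz type) estimate does suffice.

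First I would unwind the hypothesis: $h\in\cM_{\cL}$ at $x$ means that on some neighborhood $W$ of $x$ in $X$ we may write $h=\sum_{j=1}^{r}\ell_j h_j$ with each $\ell_j$ a $\bC$-valued Lipschitz function on $X\cap W$, say $|\ell_j(a)-\ell_j(b)|\le K\|a-b\|$ and $|\ell_j|\le M$ there. Summing the identity of Lemma \ref{L1}(1) over $j$ (that identity only uses that $\ell_j$ is a function, not that it is analytic) and using additivity of the double, Lemma \ref{L1}(4), gives
$$h_D=\sum_{j=1}^{r}(\ell_j\circ\pi_1)(h_j)_D+\sum_{j=1}^{r}\bigl(0,(\ell_j\circ\pi_2-\ell_j\circ\pi_1)(h_j\circ\pi_2)\bigr),$$
which one checks at once by comparing the $2p$ coordinates. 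Thus $h_D$ is expressed through the generating set $\cB$ of Proposition \ref{P2}: the $(h_j)_D$ occur with the bounded coefficients $\ell_j\circ\pi_1$, while each error term has norm $\le K\sqrt n\,\max_i|z_i\circ\pi_1-z_i\circ\pi_2|\cdot\|h_j\circ\pi_2\|$ near $(x,x)$, hence is controlled by the generators $(0,(z_i\circ\pi_1-z_i\circ\pi_2)(h_j\circ\pi_2))\in\cB$.

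Next I would apply the map $\rho$. Since $\rho$ is additive and $\rho(\beta v)=(\beta\circ\mathrm{pr})\,\rho(v)$ for any function $\beta$ on $X\times X$, on each affine chart of $X\times X\times\bP^{2p-1}$ the ideal $\rho(\cM_D)$ is generated by $\{\rho(g):g\in\cB\}$, and the displayed identity becomes
$$\rho(h_D)=\sum_j(\ell_j\circ\pi_1)\rho((h_j)_D)+\sum_j(\ell_j\circ\pi_2-\ell_j\circ\pi_1)\tau_j,$$
where $(z_i\circ\pi_1-z_i\circ\pi_2)\tau_j=\rho\bigl(0,(z_i\circ\pi_1-z_i\circ\pi_2)(h_j\circ\pi_2)\bigr)\in\rho(\cM_D)$ for every $i$. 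Using $|\ell_j\circ\pi_1|\le M$ and $|\ell_j\circ\pi_2-\ell_j\circ\pi_1|\le K\sqrt n\,\max_i|z_i\circ\pi_1-z_i\circ\pi_2|$ on a neighborhood of $(x,x)$, I obtain $|\rho(h_D)|\le C\max\{|\rho(g)|:g\in\cB\}$ on a neighborhood of each point of $\{(x,x)\}\times\bP^{2p-1}$; since for ideals such a \L ojasiewicz bound characterizes the integral closure, $\rho(h_D)\in\overline{\rho(\cM_D)}$ there, in particular at the points of $V(\rho(\cM_D))$, and Proposition \ref{proposition GK} then yields $h_D\in\overline{\cM_D}$ at $(x,x)$. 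This is the case $x'=x$ and gives $\cM_{\cL}\subseteq\cM_{S_1}$. For $x'$ close to $x$ the same Lipschitz estimates hold on a common neighborhood of $\pi_1(w),\pi_2(w)$ as $w\to(x,x')$, so the argument is unchanged; for $x'\ne x$ one may instead invoke Proposition \ref{P2.13}, which reduces the claim to $h\circ\pi_1\in\overline{\cM_x\circ\pi_1}$ and $h\circ\pi_2\in\overline{\cM_{x'}\circ\pi_2}$, each handled by the one-factor version of the bounded-coefficient/$\rho$ argument (now with no diagonal error terms, since only one projection is involved).

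The main obstacle is precisely why this detour is unavoidable: for modules a bound $\|h_D\|\le C\,\|[\cM_D]\|$ does \emph{not} by itself imply $h_D\in\overline{\cM_D}$, so the heart of the proof is to convert the Lipschitz bounds into an ideal-theoretic estimate for $\rho(\cM_D)$, where \L ojasiewicz inequalities do suffice; once that is set up, the remaining work is the bookkeeping of checking that the identity for $h_D$ really lands among the specific generators $\cB$ of Proposition \ref{P2} and that the constants $K,M$ for the $\ell_j$ can be taken uniform on the neighborhood actually used.
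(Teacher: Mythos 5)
Your proof is correct and closely parallels the paper's, sharing the essential decomposition
$h_D=\sum_j(\ell_j\circ\pi_1)(h_j)_D+\sum_j\bigl(0,(\ell_j\circ\pi_2-\ell_j\circ\pi_1)(h_j\circ\pi_2)\bigr)$
and the same use of local boundedness and the Lipschitz modulus of the $\ell_j$ to control the two families of terms. Where you diverge is in the criterion used to certify $h_D\in\overline{\cM_D}$: the paper invokes the linear-functional (``$\varphi$'') criterion of Proposition 1.11 of \cite{G3}, checking that for every analytic section $\varphi$ of $\mathrm{Hom}(\bC^{2p},\bC)$ one has $\|\varphi\cdot h_D\|\le C\sup_{s\in B}\|\varphi\cdot s\|$ on a neighborhood, while you route through the $\rho$-map of Proposition~\ref{proposition GK}, reducing to the ideal $\rho(\cM_D)$ on $X\times X\times\bP^{2p-1}$ and then applying the classical \L ojasiewicz/sup-norm characterization of integral closure for ideals. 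The two criteria are known to be equivalent reductions of module integral closure to ``covector-wise'' ideal estimates, so the substance is the same; the $\rho$ formulation buys you a cleaner invocation of the ideal-theoretic \L ojasiewicz inequality (and lets you dispose of the off-diagonal case $x'\neq x$ via the splitting of Proposition~\ref{P2.13}), while the paper's $\varphi$ formulation avoids passing to the projectivized space altogether. You correctly flag the reason either detour is necessary: a raw componentwise sup-norm bound on $h_D$ against generators of $\cM_D$ does not characterize $\overline{\cM_D}$, so one must linearize first. One caveat worth making explicit (present in the paper as well): to get the conclusion at $(x,x')$ for $x'$ far from $x$, you need the Lipschitz representation $h=\sum\ell_jh_j$ to be valid on a set containing both $x$ and $x'$; otherwise, via Proposition~\ref{P2.13}, one only obtains $h\circ\pi_2\in\overline{\cM_{x'}\circ\pi_2}$ under an additional hypothesis about $h$ near $x'$.
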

\begin{proof}
	The generators of generators of $\cM_D$ are given by the set $$B=\{(h_i)_D\}_{i=1}^r\cup\{(0,(z_j\circ\pi_1-z_j\circ\pi_2)(h_i\circ\pi_2))\}_{i,j=1}^{r,n} .$$	
	
	We will use the criterion for integral closure of modules obtained in Proposition 1.11 of \cite{G3}. For each $i\in\{1,...,r\}$ we can write \\$h_{i}=(h_{i1},...,h_{ip})$. 
	
	Let $\varphi:X\times X\rightarrow \mbox{Hom}(\bC^{2p},\bC)$ be an arbitrary global section of $\Gamma(\mbox{Hom}(\bC^{2p},\bC),X\times X)$ with components $\varphi_1,...,\varphi_{2p}$. 
	
	For each $(z,z')\in X\times X$ let $$M(z,z'):=\sup\{\norm \varphi(z,z')\cdot s(z,z')\norm\mid s\in B \}.$$
	
	Since $h\in\cOXp$ then $h_D\in\cO_{X\times X}^{2p}$. By hypothesis, for each $i\in\{1,...,r\}$ there exists $C_i>0$ such that $$\norm \alpha_i(z)-\alpha_i(z')\norm \leq C_i\underset{j\in\{1,...,n\}}{\sup}\{\norm z_j-z'_j\norm\}$$
	\noindent $\forall (z,z')\in X\times X$. 
	
	Take $C_0:=\max\{C_1,...,C_r\}$. For each $i\in\{1,...,r\}$, $\alpha_i$ is locally bounded, so there exists $D_i>0$ and $U_i$ neighborhood of $x$ in $X$ such that $\norm \alpha_i(z)\norm\leq D_i$, $\forall z\in U_i$. Take $U:=\bigcap\limits_{i=1}^r U_i$ and $D:=\max\{D_1,...,D_r\}$.
	
	Let $(z,z')\in U\times X$ be arbitrary. Then: $\norm \varphi(z,z')\cdot h_D(z,z')\norm 
	\leq \sum\limits_{i=1}^{r}\norm \alpha_i(z)\norm \norm \varphi(z,z')\cdot (h_i)_D(z,z')\norm + \sum\limits_{i=1}^{r}\norm \alpha_i(z)-\alpha_i(z')\norm \norm \sum\limits_{\ell=1}^{p}\varphi_{p+\ell}(z,z')h_{i\ell}(z,z')\norm \\
	\leq DrM(z,z')+C_0r\underset{\begin{smallmatrix}
		i\in\{1,...,r\}\\
		j\in\{1,...,n\}
		\end{smallmatrix}}{\sup}\{\norm \sum\limits_{\ell=1}^{p}\varphi_{p+\ell}(z,z')(z_j-z'_j)h_{i\ell}(z')\norm\}
	   \\ \leq(Dr+C_0r)M(z,z')$. Taking $C:=C_0r+Dr>0$ we just proved that $$\norm \varphi(z,z')\cdot h_D(z,z')\norm\leq C\underset{s\in B}{\sup}\{\norm \varphi(z,z')\cdot s(z,z')\norm \} $$
	\noindent for all $(z,z')\in U\times X$, where $B$ is a generator set of $M_D$ and $U\times X$ is a neighborhood of $(x,x')$ in $X\times X$. Therefore, $h_D\in\overline{M_D}$ at $(x,x')$.
\end{proof}

\begin{remark}
	If the elements of $\cL_X$ were meromorphic functions the above theorem would follow easily. However we assume nothing about them other than that they are Lipschitz.
\end{remark}

\begin{remark}
	The Necessity Theorem still works for the real analytic case. One can use the Proposition 4.2 of \cite{G3} and the proof is the same.
\end{remark}

\begin{corollary} Let $\cM\subseteq\cO_{X}^p$ be an $\cO_X$-submodule of $\cOXp$. Then, $\cM\subseteq \cM_{\cL}\subseteq \overline{\cM}$ and, in particular, 	$e(\cM,\cM_{\cL})=0$.
\end{corollary}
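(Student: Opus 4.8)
The plan is to prove the two inclusions $\cM \subseteq \cM_{\cL} \subseteq \overline{\cM}$ separately, and then read off $e(\cM,\cM_{\cL}) = 0$ from the multiplicity theory as in the earlier propositions (Proposition \ref{P15}, \ref{P19}). The first inclusion $\cM \subseteq \cM_{\cL}$ is essentially definitional: since $\cO_X \subseteq \cL_X$, any $h \in \cM$ can be written as a $\cL_X$-combination of generators with coefficients in $\cO_X$, so $h \in \cL_X\cM$; and trivially $h \in \cOXp$, so $h \in \cL_X\cM \cap \cOXp = \cM_{\cL}$.

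For the harder inclusion $\cM_{\cL} \subseteq \overline{\cM}$, I would combine the Necessity Theorem with Proposition \ref{P12}. Concretely, take $h \in \cM_{\cL}$ at a point $x \in X$. By the Necessity Theorem (Theorem \ref{T2.1}), $h_D \in \overline{\cM_D}$ at $(x,x)$; equivalently $h \in \cM_{S_1}$ at $x$. But Proposition \ref{P15}(2) already gives $\cM_{S_1} \subseteq \overline{\cM}$, so $h \in \overline{\cM}$ at $x$. Alternatively, and more directly, apply Proposition \ref{P12}(1) to $h_D \in \overline{\cM_D}$ to conclude $h \in \overline{\cM}$ at $x$ (and at any other point, for that matter). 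Since $x$ was arbitrary this holds as sheaves.

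Finally, once $\cM \subseteq \cM_{\cL} \subseteq \overline{\cM}$ is established, the statement $e(\cM,\cM_{\cL}) = 0$ follows exactly as in the proofs of Propositions \ref{P15} and \ref{P19}: a submodule sandwiched between $\cM$ and its integral closure $\overline{\cM}$ is a module having $\cM$ as a reduction, hence their Buchsbaum--Rim multiplicity difference vanishes. I would simply cite this standard fact (the same one invoked implicitly in the earlier propositions) rather than reprove it.

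The only genuine content here is the inclusion $\cM_{\cL} \subseteq \overline{\cM}$, and the main obstacle — controlling the non-meromorphic Lipschitz coefficients — has already been absorbed into the proof of the Necessity Theorem (as the first remark after that theorem emphasizes). So in the write-up the corollary is a two-line deduction; the work was done upstream.
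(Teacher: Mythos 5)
Your proof is correct and follows the route the paper intends: $\cM \subseteq \cM_{\cL}$ is immediate from $\cO_X \subseteq \cL_X$, and $\cM_{\cL} \subseteq \overline{\cM}$ comes from the Necessity Theorem (either via $\cM_{\cL} \subseteq \cM_{S_1} \subseteq \overline{\cM}$ using Proposition~\ref{P15}, or directly from $h_D \in \overline{\cM_D}$ plus Proposition~\ref{P12}(1)), with $e(\cM,\cM_{\cL})=0$ then following because $\cM$ is a reduction of $\cM_{\cL}$.
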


In \cite{FR2} the authors defined the notion of \textit{strongly bi-Lipschitz equisingularity} for a family of analytic complex functions.

\begin{definition}
	Let $U$ be a connected open subset of $\bC$ and let $(X,0)\subseteq(\bC^n,0)$ be a germ of an analytic variety. Consider a family of analytic functions $F:X\times U\rightarrow \bC$ with $F(0,y)=0$ and $f_y(z)=F(z,y)$. We say that $F$ is \textbf{strongly bi-Lipschitz equisingular (sLe)}  if there exists a continuous map $v:X\times\bC\rightarrow\bC^n$, $v_y(z):=v(z,y)$, such that each vector field $v_y:\bC^n\rightarrow \bC^n$ is Lipschitz, $\forall y\in U$ and $$\frac{\partial f_y}{\partial y}(z)=D(f_y)_z(v_y(z))$$
	\noindent for every $y\in U$ and $z$ near to the origin in $X$. Notice the last equation can be rewritten as $$\frac{\partial F}{\partial y}=\sum\limits_{i=1}^{n}v_i\frac{\partial F}{\partial z_i}$$
	\noindent where $v_1,...,v_n$ are the components of $v$. 
\end{definition}

So the sLe condition is equivalent to asking that the partial derivative of $F$ is an element of the strong Lipschitz saturation of the jacobian ideal $J(F)$.

In \cite{G4} the author defined a condition for a family of functions analogous to the $iL_A$ condition for sets (see \cite{SG,G1}).

\begin{definition}
	Let  $F:\bC^n\times \bC\rightarrow \bC$ be a family of analytic fucntions with $F(0,y)=0$ and $f_y(z)=F(z,y)$. Denote the parameter space $Y=0\times \bC$ and assume that $Y$ is the singular locus of $F$. Here we work with the double relative to $Y$, i.e, defined by the projections $\pi_1,\pi_2$ restricted to the fiber product $(\bC^n\times\bC)\underset{Y}{\times}(\bC^n\times\bC)$. We say that $F$ is \textbf{infinitesimally Lipschitz equisingular (iLe)} at a point of the fiber product if $$(JM(F)_Y)_D\subseteq\overline{(JM_z(F))_D}$$ at this point.
\end{definition}

Using the Necessity Theorem, we get the following

\begin{corollary}\label{T2.5}
	If $F:\bC^n\times\bC\rightarrow \bC$ is a strongly bi-Lipschitz equisingular family of functions then $F$ is infinitesimally Lipschitz equisingular.
\end{corollary}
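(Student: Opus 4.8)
The plan is to show that the sLe condition translates, via the Necessity Theorem, directly into the inclusion defining iLe. First I would unwind the definitions: the sLe condition says that $\partial F/\partial y = \sum_{i=1}^n v_i\,\partial F/\partial z_i$ for some continuous $v = (v_1,\dots,v_n)$ whose components $v_y$ are Lipschitz in $z$, uniformly (or at least locally uniformly) in $y$. In module language, writing $JM_z(F)$ for the $\cO$-submodule generated by the partials $\partial F/\partial z_1,\dots,\partial F/\partial z_n$ (and $JM(F)_Y$, resp.\ $(JM(F)_Y)$, for the corresponding module relative to $Y$, which includes $\partial F/\partial y$ as the extra generator), the sLe equation exhibits $\partial F/\partial y$ as a Lipschitz combination of generators of $JM_z(F)$. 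Hence $\partial F/\partial y \in (JM_z(F))_{\cL}$, the strong Lipschitz saturation, where one must be slightly careful to check that the relevant module is finitely generated by global sections so the Necessity Theorem applies (the generators being the partials of $F$, this is immediate).

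Next I would apply the Necessity Theorem (Theorem \ref{T2.1}) to the module $\cM = JM_z(F)$ and the element $h = \partial F/\partial y$: since $h \in \cM_{\cL}$, we get $h_D \in \overline{\cM_D}$ at $(x,x')$ for every $x'$, i.e.\ $\bigl(\partial F/\partial y\bigr)_D \in \overline{(JM_z(F))_D}$. Then I would note that the module $(JM(F)_Y)_D$ appearing in the iLe condition is generated by $(JM_z(F))_D$ together with the doubles of the extra generator $\partial F/\partial y$ (and the diagonal-correction terms of Lemma \ref{L1}/Proposition \ref{P2}); since each of these additional generators is now shown to lie in $\overline{(JM_z(F))_D}$, and $\overline{(JM_z(F))_D}$ is closed under the module operations, we conclude $(JM(F)_Y)_D \subseteq \overline{(JM_z(F))_D}$ at the point in question, which is exactly the iLe condition. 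One should take the double relative to $Y$ throughout, as in the definition of iLe, and observe that the Necessity Theorem is stated and proved in a way that applies equally in the relative setting.

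The main obstacle, and the point deserving the most care, is the bookkeeping at the interface between the "strong Lipschitz saturation" side and the "double" side: one must verify that the module whose strong Lipschitz saturation contains $\partial F/\partial y$ is precisely the one whose double appears on the right-hand side of the iLe inclusion, and that the extra generators of $(JM(F)_Y)_D$ beyond $(JM_z(F))_D$ — namely the terms $(0,(z_j\circ\pi_1 - z_j\circ\pi_2)(h_i\circ\pi_2))$ coming from Proposition \ref{P2} applied with the enlarged generating set — are absorbed once $\bigl(\partial F/\partial y\bigr)_D$ is in the integral closure; this follows from Lemma \ref{L1}(2) together with the fact that $\overline{(JM_z(F))_D}$ is a module. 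Modulo this verification, the corollary is a direct consequence of Theorem \ref{T2.1}, so the write-up will be short: restate the sLe equation, deduce membership in the strong Lipschitz saturation, invoke the Necessity Theorem, and package the conclusion as the iLe inclusion.
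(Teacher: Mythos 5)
Your proposal matches the approach the paper intends: the sentence immediately preceding the corollary states that sLe is equivalent to $\partial F/\partial y$ lying in the strong Lipschitz saturation of the jacobian ideal $J(F)=JM_z(F)$, and the corollary is then a direct application of Theorem \ref{T2.1} in the relative-to-$Y$ setting. Your write-up is correct and fills in the same chain of observations (membership in $\cM_{\cL}$, Necessity Theorem, absorption of the extra generators of $(JM(F)_Y)_D$ into the integrally closed module $\overline{(JM_z(F))_D}$) that the paper leaves implicit.
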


\begin{corollary}
	Let $X\subseteq\bC^n$ be an analytic variety. If $S$ is a stratum of the Mostowski stratification of $X$ then the $iL_A$ condition holds along $S$.
\end{corollary}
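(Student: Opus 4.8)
The plan is to follow the template of Corollary~\ref{T2.5}, replacing ``strongly bi-Lipschitz equisingular'' by the defining feature of a Mostowski stratification: along one of its strata one can lift vector fields to Lipschitz vector fields tangent to all strata. First I would work near a point $x_0\in S$, choose a local retraction $r\colon X\to S$ and coordinates $(z,y)$ in which $S$ plays the role of the parameter space $Y$, thereby presenting $X$ near $x_0$ as a family $\cX\to Y$ with defining functions $G_1,\dots,G_k$. Write $JM_z(\cX)$ for the jacobian module generated by the columns $(\partial G_l/\partial z_j)_l$ and $JM(\cX)_Y$ for the relative jacobian module, generated by those columns together with the columns $(\partial G_l/\partial y_i)_l$.

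The key input is Mostowski's theorem on Lipschitz stratifications \cite{M1,M2} (see also \cite{PA1}): since the Mostowski stratification of $X$ is a Lipschitz stratification, each coordinate vector field $\partial/\partial y_i$ on $S$ lifts, near $x_0$, to a Lipschitz vector field $v^{(i)}=\partial/\partial y_i+\sum_j v_j^{(i)}\,\partial/\partial z_j$ on $X$ tangent to every stratum, in particular tangent to the regular part of $X$ and hence (by density) tangent to $X$. Tangency to $X$ means $\partial G_l/\partial y_i+\sum_j v_j^{(i)}\,\partial G_l/\partial z_j\in I(\cX)$ for each $l$; that is, modulo $I(\cX)$ the column $(\partial G_l/\partial y_i)_l$ is an $\cL_X$-combination of the generators of $JM_z(\cX)$. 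As the generators of $JM(\cX)_Y$ are precisely those of $JM_z(\cX)$ together with these columns, we obtain $JM(\cX)_Y\subseteq (JM_z(\cX))_{\cL}$.

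Now I would apply the Necessity Theorem (Theorem~\ref{T2.1}) in the family case relative to $Y=S$, giving $(JM_z(\cX))_{\cL}\subseteq (JM_z(\cX))_{S_1}$ and hence $JM(\cX)_Y\subseteq (JM_z(\cX))_{S_1}$. Unwinding the definition of the first Lipschitz saturation, every generator $h$ of $JM(\cX)_Y$ satisfies $h_D\in\overline{(JM_z(\cX))_D}$ at every point over $S$; moreover each such $h$ also lies in $\overline{JM_z(\cX)}$, so by the computation in the proof of Proposition~\ref{P15}(1) (with $\alpha$ a coordinate function) the remaining generators $(0,(z_i\circ\pi_1-z_i\circ\pi_2)(h\circ\pi_2))$ of $(JM(\cX)_Y)_D$ listed in Proposition~\ref{P2} also lie in $\overline{(JM_z(\cX))_D}$. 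Running over a generating set of $(JM(\cX)_Y)_D$ yields $(JM(\cX)_Y)_D\subseteq\overline{(JM_z(\cX))_D}$ along $S$, which is the $iL_A$ condition.

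The hard part will be the second paragraph: pinning down exactly what Mostowski's construction provides and matching ``Lipschitz vector field tangent to the strata'' with membership of the relative jacobian generators in the strong Lipschitz saturation $(JM_z(\cX))_{\cL}$, and checking that the outcome is independent of the auxiliary choices of local retraction and generators. Once that translation is in place, the rest is a formal consequence of the Necessity Theorem together with the description of the generators of a double recalled in Section~\ref{sec0}.
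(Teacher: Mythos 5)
Your argument is correct and is the intended one: the paper states this corollary without proof, expecting the reader to see that a Mostowski (Lipschitz) stratification supplies Lipschitz liftings of the stratum vector fields tangent to $X$, which places the generators of $JM(\cX)_Y$ in $(JM_z(\cX))_{\cL}$, so that the Necessity Theorem (applied at all pairs $(x,x')$ over $S$) gives the $iL_A$ containment. Your write-up supplies exactly this chain, including the bookkeeping for the extra generators of $(JM(\cX)_Y)_D$ via the generator description recalled in Proposition \ref{P2} and the step used in the proof of Proposition \ref{P15}(1).
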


\begin{example}[Fernandes and Ruas example]
	Consider the family of functions given by	
	$F(x,y,t)=\frac{1}{3}x^3-t^2xy^{3n-2}+y^{3n}$. In \cite{FR} the authors proved this family is not strongly bi-Lipschitz trivial. Here we use our criterion to get the same conclusion. In fact, we have:
	
	$\frac{\partial F}{\partial t}=-2txy^{3n-2}$
	
	$\frac{\partial F}{\partial x}=x^2-t^2y^{3n-2}$
	
	$\frac{\partial F}{\partial y}=(2-3n)t^2xy^{3n-3}+3ny^{3n-1}$.
	
	Let us prove that $iL_A$ does not hold for the family $F$. Consider the curve $\phi(t)=(ty^{\frac{3n-2}{2}},y,t,-ty^{\frac{3n-2}{2}},y,t)$, $y=t^2$, which is formed by the two parametrizations $\phi_1$ and $\phi_2$ of the two branches of the polar curve defined by $\frac{\partial F}{\partial x}=0$.
	
	Since $(\frac{\partial F}{\partial x})_D\circ\phi=(0,0)$ and $y\circ\phi_1-y\circ\phi_2=0$ then $\phi^*((J_z(F))_D)$ has two generators $(\frac{\partial F}{\partial y}\circ\phi_1,\frac{\partial F}{\partial y}\circ\phi_2)$ and $(0,(x\circ\phi_1-x\circ\phi_2)(\frac{\partial F}{\partial y}\circ\phi_2))$. Thus, if $F$ satisfies $iL_A$ then we nust have \\ $\frac{\partial F}{\partial t}\circ\phi_2-\left((\frac{\partial F}{\partial t}\circ\phi_1)\left(\frac{\frac{\partial F}{\partial y}\circ\phi_2}{\frac{\partial F}{\partial y}\circ\phi_1}\right)\right)\in((\frac{\partial F}{\partial y}\circ\phi_2)(x\circ\phi_1-x\circ\phi_2))$. This implies that $$2t^2y^{\frac{9n-6}{2}}\left(1+\frac{3n+(3n-2)t^3y^{\frac{3n-6}{2}}}{3n+(2-3n)t^3y^{\frac{3n-6}{2}}}\right)\in(ty^{\frac{9n-4}{2}})=(ty^{\frac{9n-6}{2}}.y)$$
	
	\noindent which is a contradiction. Hence, $F$ does not satisfy $iL_A$ condition. Hence, $F$ is not strongly bi-Lipschitz trivial.
\end{example}

\begin{remark}
	Since Mostowski showed that any complex analytic has a Mostowski stratification, when coupled with our necessity result, this gives another proof of the genericity of the $iL_A$ condition.
\end{remark}


\vspace{1cm}

{\sc Terence James Gaffney
	
	\vspace{0.5cm}
	
	{\small Department of Mathematics 
		
		Northeastern University
		
		{\tiny 567 Lake Hall - 02115 - Boston - MA, USA, t.gaffney@northeastern.edu }}}

\vspace{1cm}	

{\sc Thiago Filipe da Silva
	
	\vspace{0.5cm}
	
	{\small Department of Mathematics
		
		Federal University of Esp\'irito Santo
		
		{\tiny Av. Fernando Ferrari, 514 - Goiabeiras, 29075-910 - Vit\'oria - ES, Brazil, thiago.silva@ufes.br}}}

\end{document}